\documentclass{article}
\usepackage{amsmath}
\usepackage{amssymb}
\usepackage{amsthm}
\usepackage{natbib}
\usepackage[left=2cm,right=2cm,top=2cm,bottom=2cm]{geometry}
\usepackage{hyperref}
\usepackage{url}
\usepackage{placeins}
\usepackage{morefloats}
\usepackage[ruled,noend]{algorithm2e}
\usepackage{pgf,tikz}
\usepackage{mathtools}
\usepackage{authblk}
\usepackage{graphicx}
\usepackage[lofdepth, lotdepth]{subfig}

\usetikzlibrary{calc, positioning}
\newtheorem{theorem}{Theorem}

\newtheorem{definition}{Definition}
\newtheorem{example}{Example}
\newtheorem{lemma}{Lemma}
\newtheorem{proposition}{Proposition}
\DeclareMathOperator{\pa}{pa}

\DeclareMathOperator{\ch}{ch}
\DeclareMathOperator{\an}{an}
\DeclareMathOperator{\de}{de}
\DeclareMathOperator{\cc}{cc}
\DeclareMathOperator{\cn}{cn}
\DeclareMathOperator{\nd}{nd}
\DeclareMathOperator{\nb}{nb}
\DeclareMathOperator{\forb}{forb}
\DeclareMathOperator{\ignore}{ignore}

\DeclareMathOperator{\false}{\textup{\texttt{false}}}
\DeclareMathOperator{\true}{\textup{\texttt{true}}}
\DeclareMathOperator{\out}{\textup{\texttt{out}}}
\DeclareMathOperator{\visited}{\textup{\texttt{visited}}}
\DeclareMathOperator{\stack}{\textup{\texttt{stack}}}

\newcommand{\ort}{\perp \!\!\!\perp}

\newcommand{\q}{\pi(A \mid \mathbf{L})}
\newcommand{\qa}{\pi(a \mid \mathbf{L})}
\newcommand{\qap}{\pi(a^{\prime} \mid \mathbf{L})}

\begin{document}

\title{Efficient adjustment sets in causal graphical models with hidden
variables}
\author[1]{Ezequiel Smucler \thanks{esmucler@utdt.edu}}
\affil[1]{Department of Mathematics and Statistics, Universidad Torcuato Di Tella}
\author[2]{Facundo Sapienza \thanks{fsapienza@berkeley.edu}}
\affil[3]{Department of Statistics, University of California, Berkeley}
\author[3]{Andrea Rotnitzky \thanks{arotnitzky@utdt.edu}}
\affil[3]{Department of Economics, Universidad Torcuato Di Tella, CONICET and Department of Biostatistics, Harvard T.H. Chan School of Public Health}
\maketitle

\begin{abstract}
We study the selection of covariate adjustment sets for estimating the value
of point exposure dynamic policies, also known as dynamic treatment
regimes, assuming a non-parametric causal graphical model with hidden
variables, in which at least one adjustment set is fully observable. We show
that recently developed criteria, for graphs without hidden variables, to compare the asymptotic variance of non-parametric estimators of static policy values that control for certain adjustment sets, are also valid under dynamic policies and graphs with hidden variables. We show that there exist adjustment sets that are optimal minimal (minimum), in the
sense of yielding estimators with the smallest variance among those that
control for adjustment sets that are minimal (of minimum cardinality).
Moreover, we show that if either no variables are hidden or if all the
observable variables are ancestors of either treatment, outcome, or the
variables that are used to decide treatment, a globally optimal adjustment
set exists. We provide polynomial time algorithms to compute the globally
optimal (when it exists), optimal minimal, and optimal minimum adjustment
sets. Our results are based on the construction of an undirected graph in
which vertex cuts between the treatment and outcome variables correspond to
adjustment sets. In this undirected graph, a partial order between minimal
vertex cuts can be defined that makes the set of minimal cuts a lattice.
This partial order corresponds directly to the ordering of the asymptotic
variances of the corresponding non-parametrically adjusted estimators.
\end{abstract}

\affil[1]{Department of Mathematics and Statistics, Universidad Torcuato Di
Tella}

\affil[3]{Department of Statistics, University of California, Berkeley}

\affil[3]{Department of Economics, Universidad Torcuato Di Tella, CONICET
and Department of Biostatistics, Harvard T.H. Chan School of Public Health}

\section{Introduction}

In this paper we consider the selection of covariate adjustment variables
for off-policy evaluation \citep{precup} in single time contextual decision
making problems. Specifically, we consider the choice of variables that
suffice for estimating the value of a point exposure contextual policy by
the method of covariate adjustment, when the available data come from a
different policy. We assume a causal graphical model with, possibly, hidden
variables in which at least one valid adjustment set is fully observable.
The value of a policy, also known as the interventional mean, is defined as
the mean of an outcome (reward) under the policy. In the statistics
literature, a policy is referred to as a dynamic treatment regime %
\citep{Robins93, murphy2001marginal, Robins2004, schulte}.

A practical application of the methods described in this paper is in the
design of planned observational studies. Investigators designing such study
might use the existing graphical criteria for identifying the class of
candidate valid covariate adjustment sets 
\citep{pearl-causality,
kuroki-miyawa, shpitser-adjustment}, and then apply the methods described in
this paper to select an adjustment set that satisfies one of three
optimality criteria that we consider here. Each criterion is defined by
selecting the observable adjustment set that yields the non-parametrically
adjusted estimator with smallest asymptotic variance among those that
control for observable adjustment sets in a given class, specifically the
class of (i) all adjustment sets, (ii) all minimal adjustment sets, or (iii)
all adjustment sets that have minimum cardinality. We refer to adjustment
sets satisfying criterion (i) as globally optimal, (ii) as optimal minimal
and (iii) as optimal minimum. A minimal adjustment set is such that removal
of any variable from it results in an invalid adjustment set.

Our proposal extends existing methods for selecting covariate adjustment
sets in a number of ways. Specifically, \cite{kuroki-miyawa} proposed a
graphical criteria for comparing the asymptotic variance of estimators of
the value of a point exposure policy that control for two different
adjustment sets, under the following assumptions: (i) a linear causal
graphical model with no hidden variables, (ii) a policy that is an affine
function of a single covariate $L$, (iii) estimators obtained by ordinary
least squares, and (iv) adjustment sets that consist only on $L$ and another
single variable. More recently, assuming (i) and (iii) as in \cite%
{kuroki-miyawa}, but restricting attention to static policies, i.e. those
that do not depend on covariates, \cite{perkovic} derived a general
graphical characterization of the globally optimal adjustment set. \cite%
{didelez} derived an alternative graphical characterization of this set. 
\cite{perkovic} additionally provided a criterion for comparing adjustment
sets that is more widely applicable than earlier existing criteria %
\citep{kuroki-cai, kuroki-miyawa}. They also showed, by means of a
counter-example, that in graphs with hidden variables with observable
adjustment sets there may not exist a globally optimal adjustment set. \cite%
{eff-adj} extended the results of \cite{perkovic} to non-parametric causal
graphical models and non-parametrically adjusted estimators. Moreover, they
provided a graphical characterization of the optimal minimal adjustment set.

The contributions of this paper are:

\begin{enumerate}
\item We show that the criteria of \cite{perkovic} for comparing certain
pairs of adjustment sets remains valid for point exposure dynamic treatment
regimes in non-parametric causal graphical models with hidden variables.

\item We show that if either no variables are hidden, or if all the
observable variables are ancestors of either treatment, outcome, or the
variables that are used to decide treatment, a globally optimal adjustment
set exists, and we provide a graphical characterization of it.

\item We show that in graphs with hidden variables that admit at least one
observable adjustment set there always exist optimal minimal and optimal
minimum adjustment sets and we provide graphical characterizations of them.

\item We provide polynomial time algorithms to compute the globally optimal
(when it exists), optimal minimal, and optimal minimum adjustment sets.
\end{enumerate}

The formulation of our computational algorithms builds on previous work
regarding graphical characterizations of adjustment sets and algorithms to
compute them developed in \cite{acid}, \cite{pearl-tian}, \cite{textor12}, 
\cite{van14} and \cite{vanAI}. Specifically, \cite{acid} and \cite%
{pearl-tian} proposed polynomial time algorithms for finding minimal
d-separators and minimum size d-separators for a given pair of vertices on a
directed acyclic graph. Building on the work of these authors, \cite{vanAI}
(see also the earlier papers \cite{textor12, van14}) provided a constructive
graphical characterization of adjustments sets for static interventions, and
also polynomial time algorithms to compute adjustment sets, minimal
adjustment sets, and minimum size adjustments, possibly under constraints on
which some variables must be included and others must not be included in the
desired adjustment set.

The rest of the paper is organized as follows. In Section \ref{sec:back} we
review basic results and definitions regarding causal graphical models.
Next, in Section \ref{sec:adj}, we extend the definition of adjustment sets
of \cite{shpitser-adjustment} and \cite{maathuis2015} to dynamic treatment
regimes. In Section \ref{sec:nonparam} we review non-parametric estimation
of policy values. Section \ref{sec:compare} extends the criteria of \cite%
{eff-adj} to compare certain adjustment sets to dynamic interventions in
graphs with hidden variables. In Section \ref{sec:new_graph} we provide
graphical characterizations of globally optimal (when it exists), optimal
minimal, and optimal minimum adjustment sets and in Section \ref{sec:algos}
we provide polynomial time algorithms to compute them. Finally in Section %
\ref{sec:discussion} we conclude with a discussion of open problems. The
proofs of all the results stated in the main paper are available in the
Supplementary Material.

\section{Background}

\label{sec:back} We now review some basic definitions and results of the
theory of graphical models.

\subsection{Definitions and notation}

\label{sec:def}

\subsubsection{Undirected graphs}

\textbf{Undirected graphs}. An undirected graph $\mathcal{H}=(\mathbf{V},%
\mathbf{E})$ consists of a finite vertex set $\mathbf{V}$ and a set
undirected edges $\mathbf{E}$. An undirected edge between two vertices $V$, $%
W$ is represented by $V - W$. Given a set of vertices $\mathbf{Z}\subset 
\mathbf{V}$ the induced subgraph $\mathcal{H}_{\mathbf{Z}}=(\mathbf{Z},%
\mathbf{E}_{Z})$ is the graph obtained by considering only vertices in $%
\mathbf{Z}$ and edges between vertices in $\mathbf{Z}$. We will sometimes
use the notation $\mathbf{V}(\mathcal{H})$ and $\mathbf{E}(\mathcal{H})$ to
refer to the vertex and edge sets respectively of an undirected graph $%
\mathcal{H}$.

\noindent\textbf{Paths}. If $V - W$ is an edge in $\mathcal{H}$ then we say
that $V$ and $W$ are adjacent. A path from a vertex $V$ to a vertex $W$ in
graph $\mathcal{H}$ is a sequence of vertices $(V_1, \dots, V_{j})$ such
that $V_{1}=V $, $V_{j}=W$ and $V_i$ and $V_{i+1}$ are adjacent in $\mathcal{%
H}$ for all $i \in \{1, \dots, j-1\}$. We define the set of neighbors of a
vertex $V$ as the set of vertices adjacent to $V$ and use the notation $\nb_{%
\mathcal{H}}(V)$ for this set. For a set of vertices $\mathbf{Z}\subset 
\mathbf{V}$ we let $\nb_{\mathcal{H}}(\mathbf{Z}) \equiv \cup_{Z\in\mathbf{Z}%
} \nb_{\mathcal{H}}(Z)$.

\noindent\textbf{Connected components}. If there exists a path from a vertex 
$V $ to a vertex $W$ in graph $\mathcal{H}$ we say that $V$ and $W$ are
connected. A connected component of $\mathcal{H}$ is a maximal subset of
vertices $\mathbf{U}$ such that for all $V,W \in\mathbf{U}$, $V$ and $W$ are
connected in $\mathcal{H}$ by a path that goes only through vertices in $%
\mathbf{U}$. For $\mathcal{H}=(\mathbf{V}, \mathbf{E})$ and $\mathbf{U}%
\subset \mathbf{V}$, $\partial_{\mathcal{H}} \mathbf{U}$ denotes the set of
vertices in $\mathbf{V}\setminus \mathbf{U}$ which are adjacent to at least
one vertex in $\mathbf{U}$. If moreover $Y \in \mathbf{V}\setminus \mathbf{U}
$, then $\cc(\mathbf{U},Y,\mathcal{H})$ will denote the connected component
of $\mathcal{H}_{\mathbf{V}\setminus \mathbf{U}}$ which contains $Y$.

\noindent \textbf{Vertex cuts}. Consider an undirected graph $\mathcal{H}=(%
\mathbf{V}, \mathbf{E})$. Let $A, Y \in \mathbf{V}$ and $\mathbf{Z} \subset 
\mathbf{V}$ such that $\mathbf{Z}\cap \lbrace A,Y\rbrace = \emptyset$. We
say that $\mathbf{Z} $ is an $A-Y$ cut or that $A$ and $Y$ are separated by $%
\mathbf{Z}$ if all paths between $A$ and $Y$ intersect a vertex in $\mathbf{Z%
} $. If $\mathbf{Z} $ is an $A-Y$ cut we write $A \perp_{\mathcal{H}} Y \mid 
\mathbf{Z}$. $\mathbf{Z} $ is a minimal $A-Y$ cut if it is an $A-Y$ cut and
no proper subset of $\mathbf{Z}$ is an $A-Y$ cut. $\mathbf{Z} $ is a minimum 
$A-Y$ cut if it is an $A-Y$ cut and there exists no $A-Y$ cut with a
cardinality smaller than the cardinality of $\mathbf{Z}$. 
Note that every minimum $A-Y$ cut is also minimal, but the reciprocal is
false in general.

\noindent \textbf{Lattices}. A lattice $\mathcal{L}=(L, \unlhd)$ is a set $L$
together with a relation $\unlhd$ that is reflexive, anti-symmetric and
transitive such that for all $a,b \in L$ there exists a greatest lower bound
for $a,b$ in $L$, called the inf of $a,b$, and a smallest upper bound for $%
a,b$ in $L$, called the sup of $a,b$.

\subsubsection{Directed graphs}

\textbf{Directed graphs}. A directed graph $\mathcal{G}=(\mathbf{V},\mathbf{E%
})$ consists of a finite vertex (also called node) set $\mathbf{V}$ and a
set of directed edges $\mathbf{E} \subset \mathbf{V}\times\mathbf{V}$. We
represent a directed edge between two vertices $V$, $W$ by $V\rightarrow W$.
Given a set of vertices $\mathbf{Z}\subset \mathbf{V}$ the induced subgraph $%
\mathcal{G}_{\mathbf{Z}}=(\mathbf{Z},\mathbf{E}_{Z})$ is defined as the
graph obtained by considering only vertices in $\mathbf{Z}$ and edges
between vertices in $\mathbf{Z}$. We will sometimes use the notation $%
\mathbf{V}(\mathcal{G})$ and $\mathbf{E}(\mathcal{G})$ to refer to the
vertex and edge sets respectively of the directed graph $\mathcal{G}$.

\noindent\textbf{Paths}. We say that two vertices are adjacent if there is
an edge between them. A path from a vertex $V$ to a vertex $W$ in graph $%
\mathcal{G} $ is a sequence of vertices $(V_1, \dots, V_{j})$ such that $%
V_{1}=V$, $V_{j}=W$ and $V_i$ and $V_{i+1}$ are adjacent in $\mathcal{G}$
for all $i \in \{1, \dots, j-1\}$. $V$ and $W$ are the endpoints of the
path. A path $(V_1, \dots, V_{j})$ is called directed or causal if $V_{i}\to
V_{i+1}$ for all $i \in \lbrace 1,\dots,j-1\rbrace$.

\noindent \textbf{Ancestry}. If $V\rightarrow W$, then $V$ is a parent of $W$
and $W$ is a child of $V$. If there is a directed path from $V$ to $W$, then 
$V$ is an ancestor of $W$ and $W$ a descendant of $V$. We follow the
convention that very vertex is an ancestor and a descendant of itself. The
sets of parents, children, ancestors and descendants of $V$ in $\mathcal{G}$
are denoted by $\pa_{\mathcal{G}}(V)$, $\ch_{\mathcal{G}}(V)$, $\an_{%
\mathcal{G}}(V)$ and $\de_{\mathcal{G}}(V)$ respectively. The set of
non-descendants of a vertex $V$ is defined as $\nd_{\mathcal{G}}(V)\equiv 
\mathbf{V}\setminus \de_{\mathcal{G}}(V) $. For a set of vertices $\mathbf{Z}
$ we define $\an_{\mathcal{G}}(\mathbf{Z})=\cup_{Z\in\mathbf{Z}} \an_{%
\mathcal{G}}(Z) $.

\noindent \textbf{Colliders and forks}. If $\delta$ is a path on a directed
graph $\mathcal{G}$, a vertex $V$ on $\delta$ is a collider on that path if $%
\delta$ contains a subpath $(U,V,W)$ such that $U\rightarrow V\leftarrow W$.
A vertex $V$ is a fork on the path $\delta $ if $\delta $ contains a subpath 
$(U,V,W)$ such that $U\leftarrow V\rightarrow W$.

\noindent \textbf{Directed cycles, DAGs}. A directed cycle is a directed
path from $V$ to $W$, together with the edge $W\rightarrow V$. A directed
graph without directed cycles is called a directed acyclic graph (DAG).

\noindent \textbf{d-separation} \citep{verma-pearl}. Let $\mathcal{G}$ be a
DAG with vertex set $\mathbf{V}$. Let $\mathbf{U},\mathbf{W},\mathbf{Z}$ be
distinct subsets of $\mathbf{V}$. A path $\delta$ in $\mathcal{G}$ between $%
U\in \mathbf{U}$ and $W\in \mathbf{W}$ is blocked by $\mathbf{Z}$ if at
least one of the following holds:

\begin{itemize}
\item[1.] There exists a vertex on $\delta$ that is not a collider and is an
element of $\mathbf{Z}$, or

\item[2.] There exists a vertex $C$ that is a collider on $\delta $ such
that neither $C$ nor its descendants are elements of $\mathbf{Z}$.
\end{itemize}

Two sets of vertices $\mathbf{U},\mathbf{W}$ are d-separated by $\mathbf{Z}$
in $\mathcal{G}$ if for any $U\in \mathbf{U}$ and $W\in \mathbf{W,}$ all
paths between $U$ and $W $ are blocked by $\mathbf{Z}$. If $\mathbf{U},%
\mathbf{W}$ are d-separated by $\mathbf{Z}$ we write $\mathbf{U} \perp
\!\!\!\perp_{\mathcal{G}} \mathbf{W} \mid \mathbf{Z}$.

\noindent \textbf{Moral graph.} Given a DAG $\mathcal{G}$ with vertex set $%
\mathbf{V}$, the associated moral graph $\mathcal{G}^{m}$ is an undirected
graph with the same vertex set as $\mathcal{G}$ and an edge $U- V$ if any of
the following hold in $\mathcal{G}$: $U\rightarrow V$, $V\rightarrow U$, or
there exists a vertex $W$ such that $U \rightarrow W \leftarrow V$. \cite%
{lauritzen-book}, Proposition 3.25 establishes that 
\begin{equation}
\mathbf{U} \perp \!\!\!\perp_{\mathcal{G}} \mathbf{W} \mid \mathbf{Z}
\Leftrightarrow \mathbf{U} \perp_{\left\lbrace \mathcal{G}_{\an_{ \mathcal{G}%
}(\mathbf{U}\cup \mathbf{W}\cup\mathbf{Z})}\right\rbrace^{m}} \mathbf{W}
\mid \mathbf{Z}  \label{eq:moral_equiv}
\end{equation}

\medskip

\subsection{Causal graphical models}

A Bayesian Network $\mathcal{M}\left( \mathcal{G}\right) $ represented by a
DAG $\mathcal{G}$ is a statistical model that identifies the vertex set $%
\mathbf{V}$ with a random vector and assumes that the law $P$ of $\mathbf{V}$
satisfies the Local Markov Property: $V\perp \!\!\!\perp \nd_{\mathcal{G}%
}\left( V\right) \text{ }|\text{ }\pa_{\mathcal{G}}\left( V\right) \text{
under }P$ for all $V\in \mathbf{V}$, where $A\perp \!\!\!\perp B|C$ stands
for conditional independence of $A$ and $B$ given $C.$ Assuming, as we will
throughout, that $P$ admits a density $f$ with respect to some dominating
measure, the Local Markov Property implies that 
\begin{equation}
f\left( \mathbf{v}\right) =\prod\limits_{V_{j}\in \mathbf{V}}f\left\{
v_{j}\mid \pa_{\mathcal{G}}(v_{j})\right\} ,  \label{eq-factorization}
\end{equation}%
where $\pa_{\mathcal{G}}(v_{j})$ is the value taken by $\pa_{\mathcal{G}%
}\left( V_{j}\right)$ when $\mathbf{V}=\mathbf{v}.$

Throughout the paper we will assume a causal agnostic graphical model %
\citep{spirtes, mediation} represented by a DAG $\mathcal{G}$. The model
identifies the vertex set of $\mathcal{G}$ with a factual random vector $%
\mathbf{V\equiv }(V_{1},\dots ,V_{s})$ and assumes that: (i) the law $P$ of $%
\mathbf{V}$ follows model $\mathcal{M}\left( \mathcal{G}\right) $ and (ii)
for any $A\in \mathbf{V}$, $\mathbf{L}\subset \nd_{\mathcal{G}}(A)$ and $%
\pi(A\mid \mathbf{L})$ a conditional law for $A$ given $\mathbf{L}$, the
intervention density $f_{\pi}\left( \mathbf{v}\right) $ of the variables in
the graph when, possibly contrary to fact, the value of $A$ is drawn from
the law $\pi(A\mid \mathbf{L})$ is given by 
\begin{equation}
f_{\pi}\left( \mathbf{v}\right) =\pi(a\mid \mathbf{l})\prod\limits_{V_{j}\in 
\mathbf{V}\setminus \{A\}}f\left\{ v_{j}\mid \pa_{\mathcal{G}%
}(v_{j})\right\} ,  \label{eq:g-form}
\end{equation}%
where $a$ and $\mathbf{l}$ are the values taken by $A$ and $\mathbf{L}$ when 
$\mathbf{V}$ is equal to $\mathbf{v}$. Formula \eqref{eq:g-form} is known as
the g-formula \citep{robins1986}, the manipulated density formula %
\citep{scheines} or the truncated factorization formula %
\citep{pearl-causality}. The conditional law $\pi$ designates the, possibly
random, policy or dynamic treatment regime. A non-random regime that assigns
the value $d\left( \mathbf{L}\right) $ to $A$ corresponds to the point mass
conditional law $\pi(a\mid \mathbf{l})=I_{d(\mathbf{l})}(a).$ In particular,
a constant function $d(\mathbf{L})=a$ corresponds to a non-random static
regime that sets $A$ to $a$.

Associating a given vertex $Y\in \de_{\mathcal{G}}\left( A\right) $ with the
outcome or reward of interest, the value of the policy $\pi $, denoted
throughout as $\chi _{\pi }(P;\mathcal{G}),$ is defined as the mean of the
outcome under the intervention law $f_{\pi }.$ By the factorizations $\left( %
\ref{eq-factorization}\right) $ and $\left( \ref{eq:g-form}\right) ,$ the
Radom-Nykodim theorem gives 
\begin{equation*}
\chi _{\pi }(P;\mathcal{G})=E_{P}\left\{ \frac{\pi \left( A\mid \mathbf{L}%
\right) }{f\left\{ A\mid \pa_{\mathcal{G}}(A)\right\} }Y\right\} .
\end{equation*}%
Furthermore, the Local Markov property implies that 
\begin{eqnarray*}
\chi _{\pi }(P;\mathcal{G}) &=&\int y\pi (a\mid \mathbf{l})f(y\mid a,\mathbf{%
l},\mathbf{\pa})f(\mathbf{l},\pa)d(y,a,\pa,\mathbf{l}\setminus \pa) \\
&=&E_{P}\left( E_{\pi ^{\ast }}\left[ E_{P}\left\{ Y\mid A,\pa_{\mathcal{G}%
}(A),\mathbf{L}\right\} \mid \pa_{\mathcal{G}}(A),\mathbf{L}\right] \right)
\end{eqnarray*}%
where $E_{P}\left( \cdot |\cdot \right) $ stands for conditional mean under $%
P$ and $E_{\pi ^{\ast }}\left( \cdot |\cdot \right) $ stands for conditional
mean under the conditional law of $A$ given $\mathbf{L}$ and $\pa_{\mathcal{G%
}}(A)$ defined as $\pi ^{\ast }\{A\mid \pa_{\mathcal{G}}(A),\mathbf{L}%
\}\equiv \pi (A\mid \mathbf{L})$.

In this article we study inference about $\chi _{\pi}(P;\mathcal{G})$ when
only a subset $\mathbf{N}$ of $\mathbf{V}$ is observable. The inferential
problem is thus defined by the following three assumptions: (i) the law $P$
follows the Bayesian Network $\mathcal{M}\left( \mathcal{G}\right) ,$ (ii)
only the sub-vector $\mathbf{N}$ of $\mathbf{V}$ is observable on a random
sample from the marginal law of $\mathbf{N}$ under $P$ and (iii) the
parameter of interest is the functional $\chi _{\pi}(P;\mathcal{G}).$ We
have motivated this inferential task with the causal agnostic model but we
could as well have motivated it with any other existing causal graphical
model in which the law $P$ of $\mathbf{V}$ is restricted only by the Local
Markov Property and the parameter representing the value under the
intervention $\pi$ coincides with the functional $\chi _{\pi}(P;\mathcal{G}).
$ Two such models are the non-parametric structural equation model with
independent errors \citep{pearl-causality} and the finest fully randomized
causally interpreted tree structured graph model \citep{robins1986}.

Throughout the paper we will assume that both $A$ and $Y$ are observable,
that $A$ is an ancestor of $Y$, and that we are interested in estimating the
value of policies $\pi (A\mid \mathbf{L})$ that depend on a, possibly empty,
vector $\mathbf{L}$ of observable non-descendants of $A$. That is, we assume
(i) $A\in \an_{\mathcal{G}}(Y)$, (ii) $\left\{ A,Y\right\} \cup \mathbf{L}%
\subset \mathbf{N}$ and (iiii) $\mathbf{L}\subset \nd_{\mathcal{G}}\left(
A\right) $. For ease of reference we refer to (i), (ii) and (iii) as the
inclusion assumptions. In all the definitions and results that follow in the
rest of the paper we will assume that that $\mathcal{G}$ is a DAG with
vertex set $\mathbf{V}$, and that $\left( A,Y,\mathbf{L},\mathbf{N}\right) $
satisfy the inclusion assumptions. Moreover, to avoid distracting measure
theoretic complications, we will assume throughout that $A$ takes values in
a finite set $\mathcal{A}$.

\section{Adjustment sets}

\label{sec:adj} \cite{shpitser-adjustment} and \cite{maathuis2015} gave the
following definition of adjustment set for static regime. We add the
appellative static to the name adjustment set to distinguish this set from
adjustment sets for, possibly random, dynamic regimes that we will define
subsequently.

\begin{definition}
A set $\mathbf{Z\subset V\backslash }\left\{ A,Y\right\} $ is a static
adjustment set relative to $A,Y$ in $\mathcal{G}$ if for all fixed $a$,
under all $P\in \mathcal{M}\left( \mathcal{G}\right) $ 
\begin{equation}
E_{P}\left[ E_{P}\left\{ I_{(-\infty ,y]}(Y)|A=a,\pa_{\mathcal{G}%
}(A)\right\} \right] =E_{P}\left[ E_{P}\left\{ I_{(-\infty ,y]}(Y)|A=a,%
\mathbf{Z}\right\} \right] \quad \text{for all }y\in \mathbb{R}.  \notag
\end{equation}
\end{definition}

The definition implies that for a static adjustment set $\mathbf{Z}$, the
value function $\chi _{\pi _{a}}(P;\mathcal{G})$ of the non-random static
regime $\pi _{a}\left( A\right) \equiv I_{a}\left( A\right) $ that sets $a$
to $A$ admits a representation as the iterated conditional expectation $%
E_{P}\left\{ E_{P}\left( Y\mid A=a,\mathbf{Z}\right) \right\} $. The
back-door criterion \citep{pearl-causality} is a well known graphical
condition that is sufficient, but not necessary, for $\mathbf{Z}$ to be a
static adjustment set. \cite{shpitser-adjustment} gave a necessary and
sufficient graphical condition for $\mathbf{Z}$ to be a static adjustment
set. \cite{vanAI} provide an alternative, constructive, graphical
characterization of static adjustment sets.

We now extend the preceding definition to accommodate, possibly random, $%
\mathbf{L}-$dependent policies.

\begin{definition}
Let $\mathbf{L}\subset \nd_{\mathcal{G}}(A)$. A set $\mathbf{Z\subset
V\backslash }\left\{ A,Y\right\} $ is an $\mathbf{L}$ dynamic adjustment set
with respect to $A,Y$ in $\mathcal{G}$ if $\mathbf{L}\subset \mathbf{Z}$ and
for all conditional laws $\pi (A\mid \mathbf{L})$ for $A$ given $\mathbf{L}$%
, all $P\in \mathcal{M}\left( \mathcal{G}\right) $ and all $y\in \mathbb{R}$%
. 
\begin{equation}
E_{P}\left( E_{\pi ^{\ast }}\left[ E_{P}\left\{ I_{(-\infty ,y]}(Y)\mid A,\pa%
_{\mathcal{G}}(A),\mathbf{L}\right\} \mid \pa_{\mathcal{G}}(A),\mathbf{L}%
\right] \right) =E_{P}\left( E_{\pi _{\mathbf{Z}}^{\ast }}\left[
E_{P}\left\{ I_{(-\infty ,y]}(Y)\mid A,\mathbf{Z}\right\} \mid \mathbf{Z}%
\right] \right) ,  \label{eq:L-dep}
\end{equation}%
where $\pi _{\mathbf{Z}}^{\ast }(A\mid \mathbf{Z})\equiv \pi (A\mid \mathbf{L%
})$ and, recall, $\pi ^{\ast }\left\{ A\mid \pa_{\mathcal{G}}(A),\mathbf{L}%
\right\} \equiv \pi (A\mid \mathbf{L})$.
\end{definition}

Suppose that at the stage of planning a study aimed at estimating different $%
\mathbf{L}-$ dependent policies, and having postulated a causal graphical
model, the investigator acknowledges that due to practical, ethical or cost
reasons, she can only hope to observe a subset $\mathbf{N}$ of the variables
in $\mathcal{G}$. Furthermore, suppose that $\mathbf{N}$ includes at least
one $\mathbf{L}$ dynamic adjustment set $\mathbf{Z}$. She may then choose to
measure, in addition to $A$ and $Y,$ solely the variables $\mathbf{Z}$, as
these variables suffice to identify the policy value $\chi _{\pi }(P;%
\mathcal{G})$ with the functional 
\begin{equation*}
\chi _{\pi ,\mathbf{Z}}(P;\mathcal{G})\equiv E_{P}\left[ E_{\pi _{\mathbf{Z}%
}^{\ast }}\left\{ E_{P}\left( Y\mid A,\mathbf{Z}\right) \mid \mathbf{Z}%
\right\} \right] ,
\end{equation*}%
and subsequently proceed to estimate $\chi _{\pi ,\mathbf{Z}}(P;\mathcal{G})$
non-parametrically as further explained in Section \ref{sec:nonparam}. Note
that this strategy effectively uses the causal graphical model solely as an
aid to identify adjustment sets at the design stage, but for robustness
against model misspecification, it avoids exploiting the restrictions
implied by the Bayesian Network $\mathcal{M}\left( \mathcal{G}\right) $ to
either identify $\chi _{\pi }(P;\mathcal{G})$ with a formula different from
adjustment formula $\left( \ref{eq:L-dep}\right) $ or to improve efficiency
in the estimation of the functionals $\chi _{\pi ,\mathbf{Z}}(P;\mathcal{G}%
). $

The preceding formulation raises the following questions: (1) given a graph $%
\mathcal{G}$ and a subset $\mathbf{N}$ of its vertices, how can we tell if
an $\mathbf{L}$ dynamic adjustment set that is a subset of $\mathbf{N}$
exists? and (2) if several different observable $\mathbf{L}$ dynamic
adjustment sets exist, which one should one measure? Our goal is to answer
these question assuming that the basis for comparing adjustment sets $%
\mathbf{Z}$ is the variance of the limiting distribution of the
non-parametric estimators of $\chi _{\pi ,\mathbf{Z}}(P;\mathcal{G})$. To
formally study these problems, we start with the following definitions.

\begin{definition}
The pair $\left( \mathbf{L},\mathbf{N}\right) $ is said to be an admissible
pair with respect to $A,Y$ in $\mathcal{G}$ if there exists an adjustment
set $\mathbf{Z}$ with respect to $A,Y$ in $\mathcal{G}$ such that $\mathbf{L}%
\subset \mathbf{Z}\subset \mathbf{N}.$
\end{definition}

\begin{definition}
An $\mathbf{L}$ dynamic adjustment set $\mathbf{Z}$ with respect to $A,Y$ in 
$\mathcal{G}$ that is a subset of $\mathbf{N}$ is said to be an $\mathbf{L}-%
\mathbf{N}$ dynamic adjustment set. An $\mathbf{L}-\mathbf{N}$ dynamic
adjustment set $\mathbf{Z}$ is said to be minimal if no strict subset of $%
\mathbf{Z}$ is an $\mathbf{L}-\mathbf{N}$ dynamic adjustment set. An $%
\mathbf{L}-\mathbf{N}$ dynamic adjustment set $\mathbf{Z}$ is said to be
minimum if there exists no $\mathbf{L}-\mathbf{N}$ dynamic adjustment set
with cardinality strictly smaller than the cardinality of $\mathbf{Z}$.
\end{definition}

\begin{definition}
An $\mathbf{L}-\mathbf{N}$ static adjustment set $\mathbf{Z}$ with respect
to $A,Y$ in $\mathcal{G}$ is a static adjustment set with respect to $A,Y$
in $\mathcal{G}$ that satisfies $\mathbf{L}\subset \mathbf{Z}\subset \mathbf{%
N.}$ An $\mathbf{L}-\mathbf{N}$ static adjustment set $\mathbf{Z}$ is said
to be minimal if no strict subset of $\mathbf{Z}$ is an $\mathbf{L}-\mathbf{N%
}$ static adjustment set. An $\mathbf{L}-\mathbf{N}$ static adjustment set $%
\mathbf{Z}$ is said to be minimum if there exists no $\mathbf{L}-\mathbf{N}$
static adjustment set of cardinality strictly smaller than the cardinality
of $\ \mathbf{Z}$.
\end{definition}

The following proposition establishes that the class of $\mathbf{L}-\mathbf{N%
}$ static (minimal, minimum) adjustment sets and the class of $\mathbf{L}-%
\mathbf{N}$ (minimal, minimum) dynamic adjustment sets coincide.
Additionally, it establishes that minimal $\mathbf{L}-\mathbf{N}$ dynamic
adjustment sets are subsets of the set of ancestors of $\{A,Y\}\cup \mathbf{%
L.}$

\begin{proposition}
\label{prop:equiv_dtr} $\ $

\begin{enumerate}
\item $\mathbf{Z}$ is an $\mathbf{L}-\mathbf{N}$ dynamic adjustment set with
respect to $A,Y$ in $\mathcal{G}$ if and only if $\ \mathbf{Z}$ is an $%
\mathbf{L}-\mathbf{N}$ static adjustment set with respect to $A,Y$ in $%
\mathcal{G}$.

\item If $\mathbf{Z}$ is a minimal $\mathbf{L}-\mathbf{N}$ dynamic
adjustment set with respect to $A,Y$ in $\mathcal{G}$ then $\mathbf{Z}%
\subset \an_{\mathcal{G}}(\{A,Y\}\cup \mathbf{L})$.

\item $\mathbf{Z}$ is a minimal $\mathbf{L}-\mathbf{N}$ dynamic adjustment
set with respect to $A,Y$ in $\mathcal{G}$ if and only if $\mathbf{Z}$ is a
minimal $\mathbf{L}-\mathbf{N}$ static adjustment set with respect to $A,Y$
in $\mathcal{G}$.

\item $\mathbf{Z}$ is a minimum $\mathbf{L}-\mathbf{N}$ dynamic adjustment
set with respect to $A,Y$ in $\mathcal{G}$ if and only if $\mathbf{Z}$ is a
minimum $\mathbf{L}-\mathbf{N}$ static adjustment set with respect to $A,Y$
in $\mathcal{G}$.
\end{enumerate}
\end{proposition}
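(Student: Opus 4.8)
The plan is to prove the four claims essentially by reducing everything to the static case, exploiting the fact that an $\mathbf{L}$ dynamic adjustment set is required to contain $\mathbf{L}$ and that, conditioning on $\mathbf{L}$, the dynamic policy disintegrates into a mixture of static interventions.

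\textbf{Claim 1 (dynamic $=$ static, with $\mathbf{L}$ as a subset).} First I would unpack the defining identity \eqref{eq:L-dep}. Using $\pi^{\ast}\{A\mid \pa_{\mathcal G}(A),\mathbf L\}\equiv \pi(A\mid\mathbf L)$ and $\pi^{\ast}_{\mathbf Z}(A\mid\mathbf Z)\equiv\pi(A\mid\mathbf L)$, I would rewrite both sides as integrals against $\pi(a\mid\mathbf l)$, so that \eqref{eq:L-dep} becomes
\begin{equation*}
\int \pi(a\mid \mathbf l)\,\Big(E_{P}\big[E_{P}\{I_{(-\infty,y]}(Y)\mid A=a,\pa_{\mathcal G}(A),\mathbf L\}\big]-E_{P}\big[E_{P}\{I_{(-\infty,y]}(Y)\mid A=a,\mathbf Z\}\big]\Big)\,d(a,\mathbf l)=0
\end{equation*}
for \emph{all} conditional laws $\pi(\cdot\mid\mathbf L)$ and all $P\in\mathcal M(\mathcal G)$. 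Ranging $\pi$ over point masses $I_{d(\mathbf l)}(a)$ for arbitrary measurable $d$, and using that $\mathbf L\subset\mathbf Z$ so that conditioning on $\mathbf Z$ refines conditioning on $\mathbf L$, I would argue this forces the integrand to vanish for (almost) every fixed $a$, i.e.\ the static adjustment identity with $\mathbf Z$, noting that $\pa_{\mathcal G}(A)\subset\nd_{\mathcal G}(A)$ and the static adjustment identity with $\pa_{\mathcal G}(A)$ always holds. The converse direction — a static adjustment set $\mathbf Z\supset\mathbf L$ satisfies \eqref{eq:L-dep} — is immediate by integrating the static identity against $\pi(a\mid\mathbf l)$. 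The mild technical care here is the "for all $P$" quantifier and the measure-zero caveats; since $A$ takes values in a finite set $\mathcal A$ (as assumed at the end of Section 2), the integral over $a$ is a finite sum and I can test one value of $a$ at a time by choosing $\pi$ appropriately, which is the cleanest route and avoids any delicate disintegration argument.

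\textbf{Claim 2 (minimal dynamic sets are ancestral).} Here I would invoke the constructive/graphical characterization of static adjustment sets of \cite{shpitser-adjustment} and \cite{vanAI}, together with Claim 1. The standard fact is that if $\mathbf Z$ is a static adjustment set then $\mathbf Z\cap\an_{\mathcal G}(\{A,Y\})$ — or more precisely $\mathbf Z$ with all vertices outside $\an_{\mathcal G}(\{A,Y\}\cup\mathbf L)$ deleted — is still a static adjustment set: a non-ancestor of $\{A,Y\}\cup\mathbf L$ can be neither a needed blocker (it lies on no proper non-causal path that matters, by the moralization identity \eqref{eq:moral_equiv}) nor a forbidden descendant. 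Since $\mathbf L\subset\an_{\mathcal G}(\mathbf L)$, the pruned set still contains $\mathbf L$, hence is an $\mathbf L$ dynamic adjustment set by Claim 1, and it is a subset of $\mathbf N$ since $\mathbf Z\subset\mathbf N$. Minimality of $\mathbf Z$ then forces $\mathbf Z$ to equal its pruning, i.e.\ $\mathbf Z\subset\an_{\mathcal G}(\{A,Y\}\cup\mathbf L)$.

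\textbf{Claims 3 and 4 (minimal and minimum coincide).} These now follow formally from Claim 1 by a general set-theoretic observation: if two families $\mathcal F_{\mathrm{dyn}}$ and $\mathcal F_{\mathrm{stat}}$ of subsets of $\mathbf N$ (the $\mathbf L$--$\mathbf N$ dynamic, resp.\ static, adjustment sets) are equal as families, then their minimal elements coincide and their minimum-cardinality elements coincide. So I would simply note that "minimal $\mathbf L$--$\mathbf N$ dynamic adjustment set" and "minimal $\mathbf L$--$\mathbf N$ static adjustment set" are, by Claim 1, two names for the same combinatorial object, and likewise for minimum. The main obstacle in the whole proposition is really Claim 1, and within it the only subtle point is justifying that quantifying \eqref{eq:L-dep} over all policies $\pi$ (in particular all point-mass policies) collapses the dynamic identity to the family of static identities indexed by $a\in\mathcal A$; the finiteness of $\mathcal A$ and the freedom to choose $\pi(a\mid\mathbf l)$ to depend on $\mathbf l$ arbitrarily is what makes this work, and I would state it as a short lemma before assembling the four parts.
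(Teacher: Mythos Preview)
Your architecture for Claims 2--4 matches the paper's (Claim 2 via pruning non-ancestors using the \cite{vanAI} characterization, Claims 3--4 as formal corollaries of Claim 1), and your forward implication in Claim 1 (dynamic $\Rightarrow$ static by specializing to $\pi=I_a$, plus the Local Markov step to drop $\mathbf L$ from the conditioning on the left) is also what the paper does. The gap is the \emph{converse} of Claim 1, which you treat as the easy direction.

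Your displayed integral is not a correct rewriting of \eqref{eq:L-dep}: because $\pi(a\mid\mathbf L)$ depends on $\mathbf L$, it cannot be pulled outside the outer $E_P$. Writing $\tilde B(a)=E_P\{I_{(-\infty,y]}(Y)\mid A=a,\pa_{\mathcal G}(A),\mathbf L\}$ and $B(a,\mathbf Z)=E_P\{I_{(-\infty,y]}(Y)\mid A=a,\mathbf Z\}$, identity \eqref{eq:L-dep} is
\[
\sum_{a\in\mathcal A} E_P\!\Big[\pi(a\mid\mathbf L)\,\big\{E_P[\tilde B(a)\mid\mathbf L]-E_P[B(a,\mathbf Z)\mid\mathbf L]\big\}\Big]=0,
\]
whereas the static identity is only the \emph{marginal} equality $E_P[\tilde B(a)]=E_P[B(a,\mathbf Z)]$ for each $a$. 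Multiplying that marginal equality by weights and summing over $a$ reproduces the dynamic identity only when the weights do not depend on $\mathbf L$; for genuinely $\mathbf L$-dependent $\pi$ the dynamic identity is strictly stronger. Concretely, at a fixed $P$ one can have $E_P[g(a,\mathbf L)]=0$ for every $a$ while $\sum_a E_P[\pi(a\mid\mathbf L)\,g(a,\mathbf L)]\neq 0$ for some $\mathbf L$-dependent policy, so the converse is not ``immediate by integrating''.

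What the converse actually requires is the \emph{conditional} equality $E_P[\tilde B(a)\mid\mathbf L]=E_P[B(a,\mathbf Z)\mid\mathbf L]$ a.s.\ for each $a$, and this is where the paper does real work: it invokes the counterfactual characterization of adjustment (Corollary~2 of \cite{shpitser-adjustment}) to write $B(a,\mathbf Z)=E\{I_{(-\infty,y]}(Y_a)\mid\mathbf Z\}$ and $\tilde B(a)=E\{I_{(-\infty,y]}(Y_a)\mid\pa_{\mathcal G}(A),\mathbf L\}$ (the latter because $\pa_{\mathcal G}(A)\cup\mathbf L$ satisfies the back-door criterion); since $\mathbf L\subset\mathbf Z$, taking $E[\cdot\mid\mathbf L]$ of each collapses both to $E\{I_{(-\infty,y]}(Y_a)\mid\mathbf L\}$. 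Your proposal needs this step, or an equivalent argument that passes through the graphical adjustment criterion rather than the bare static identity; without it the backward direction does not go through.
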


Part 1) of Proposition \ref{prop:equiv_dtr} strengthens Theorem 2 of \cite%
{kuroki-miyawa} which establishes that if $\mathbf{Z}$ satisfies the
back-door criterion \citep{pearl-causality} and $\mathbf{L}\subset \mathbf{Z}%
\subset \mathbf{N} $ then $\mathbf{Z}$ is an $\mathbf{L}-\mathbf{N}$
non-random dynamic adjustment set with respect to $A,Y$ in $\mathcal{G}$ if
by such adjustment sets we mean those that satisfy $\left( \ref{eq:L-dep}%
\right) $ for any point mass probability $\pi(A\mid \mathbf{L})$ $=I_{d(%
\mathbf{L})}(A)$ and any given $d.$

Combining Part 1) of Proposition \ref{prop:equiv_dtr} and a result by \cite%
{vanAI} we can also give an answer to the first question raised above.
Specifically, there exists an $\mathbf{L}-\mathbf{N}$ dynamic adjustment set
if and only if there exists an $\mathbf{L}-\mathbf{N}$ static adjustment set
with respect to $A,Y$ in $\mathcal{G}$, i.e. if and only if the pair $\left( 
\mathbf{L},\mathbf{N}\right) $ is admissible. \cite{vanAI} gave the
following necessary and sufficient graphical condition for the pair $\left( 
\mathbf{L},\mathbf{N}\right) $ to be admissible: the set $\left[ \an_{%
\mathcal{G}}(\{A,Y\}\cup \mathbf{L})\cap \mathbf{N}\right] \setminus \forb%
(A,Y,\mathcal{G})$ is an $\mathbf{L}-\mathbf{N}$ static adjustment set,
where the so-called forbidden set is defined as $\forb(A,Y,\mathcal{G}%
)\equiv \de_{\mathcal{G}}\left( \cn(A,Y,\mathcal{G})\right) \cup \left\{
A\right\} $ with $\cn(A,Y,\mathcal{G})$ defined as the set of all vertices
that lie on a causal path between a vertex in $A$ and $Y$ and are not equal
to $A$. \cite{vanAI} additionally provided a polynomial time algorithm to
test this condition.

In addition to providing a graphical test of $\left( \mathbf{L},\mathbf{N}%
\right) $ admissibility, \cite{vanAI} provided a constructive graphical
characterization of $\mathbf{L}-\mathbf{N}$ static adjustment sets when
these exist. Moreover, they provided a polynomial time algorithm to find one 
$\mathbf{L}-\mathbf{N}$ static adjustment set, one minimal $\mathbf{L}-%
\mathbf{N}$ static adjustment sets and one minimum $\mathbf{L}-\mathbf{N}$
static adjustment set and an algorithm with polynomial time latency to list
all minimal $\mathbf{L}-\mathbf{N}$ static adjustment sets and all minimum $%
\mathbf{L}-\mathbf{N}$ static adjustment sets. Proposition \ref%
{prop:equiv_dtr} implies that the results of \cite{vanAI} are equally
applicable to find $\mathbf{L}-\mathbf{N}$ dynamic adjustment sets.

\section{Non-parametric estimation of a policy value}

\label{sec:nonparam}

We begin this section highlighting the elements of the asymptotic theory for
non-parametric estimators of $\chi _{\pi ,\mathbf{Z}}(P;\mathcal{G})$ that
are relevant to our derivations. An estimator $\widehat{\gamma }$ of a
parameter $\gamma \left( P\right) $ based on $n$ independent identically
distributed random variables $\mathbf{V}_{1},\dots ,\mathbf{V}_{n}$ of $%
\mathbf{V}$ is said to be asymptotically linear at $P$ if there exists a
random variable $\varphi _{P}\left( \mathbf{V}\right) $, called the
influence function of $\gamma (P)$ at $P$, which when $\mathbf{V} \sim $ $P$%
, has mean zero and finite variance and is such that $n^{1/2}\left\{ 
\widehat{\gamma }-\gamma \left( P\right) \right\}
=n^{-1/2}\sum_{i=1}^{n}\varphi _{P}\left( \mathbf{V}_{i}\right) +o_{p}(1).$
By the Central Limit Theorem, for any asymptotically linear estimator $%
\widehat{\gamma },$ it holds that $n^{1/2}\left\{ \widehat{\gamma }-\gamma
\left( P\right) \right\} $ converges in distribution to a mean zero Normal
distribution with variance $var_{P}\left\{ \varphi _{P}\left( \mathbf{V}%
_{i}\right) \right\} $. Given a collection of probability laws $\mathcal{P}$
for $\mathbf{V}$, an estimator of $\widehat{\gamma }$ of $\gamma \left(
P\right) $ is said to be regular at one $P$ if its convergence to $\gamma
\left( P\right) $ is locally uniform at $P$ in $\mathcal{P}$ %
\citep{van1998asymptotic}.

It is well known that estimators of $\chi _{\pi ,\mathbf{Z}}(P;\mathcal{G})$
that are regular and asymptotically linear at all $P$ in any given model $%
\mathcal{P}$ that makes at most `complexity' type assumptions on 
\begin{equation*}
b\left( A,\mathbf{Z};P\right) \equiv E_{P}\left( Y|A,\mathbf{Z}\right) \quad 
\text{and/or}\quad f\left( A\mid \mathbf{Z}\right)
\end{equation*}%
have the influence function $\psi _{P,\pi }\left( \mathbf{Z};\mathcal{G}%
\right) $ given by 
\begin{equation}
\psi _{P,\pi }\left( \mathbf{Z};\mathcal{G}\right) \equiv \frac{\pi (A\mid 
\mathbf{L})}{f\left( A\mid \mathbf{Z}\right) }\left\{ Y-b\left( A,\mathbf{Z}%
;P\right) \right\} +E_{\pi _{\mathbf{Z}}^{\ast }}\left\{ b\left( A,\mathbf{Z}%
;P\right) \mid \mathbf{Z}\right\} -\chi _{\pi }\left( P,\mathcal{G}\right) .
\label{eq:inf_fc_sing}
\end{equation}%
Here and throughout, for conciseness, we avoid writing the arguments $A$ and 
$Y$ in the function $\psi _{P,\pi }\left( \cdot ,P\right) .$ Examples of
complexity type assumptions are the assumptions that the functions $b\left(
A,\mathbf{Z};P\right) $ and/or $f\left( A\mid \mathbf{Z}\right) $ belong to
some smooth function class such as a Holder ball, or to a function class
with a Rademacher complexity that satisfies certain bounds. Examples of
estimation strategies that make complexity type assumptions include: the
inverse probability weighted estimator $\widehat{\chi }_{\pi ,IPW}=\mathbb{P}%
_{n}\left\{ \widehat{f}\left( A\mid \mathbf{Z}\right) ^{-1}\pi (A\mid 
\mathbf{L})Y\right\} ,$ where $\widehat{f}\left( A|Z\right) $ is a
non-parametric smoothing type, e.g. series or kernel based, estimator of $%
f\left( A\mid \mathbf{Z}\right) $ \citep{hirano}, the outcome regression
estimator $\mathbb{P}_{n}\left[ E_{\pi _{\mathbf{Z}}^{\ast }}\left\{ 
\widehat{b}\left( A,\mathbf{Z}\right) \mid \mathbf{Z}\right\} \right] $
where $\widehat{b}$ is a non-parametric smoothing type estimator of $b$ %
\citep{hahn} and the doubly-robust estimator 
\citep{vanderlaan, dudik,
chernozhukov2018double, smucler} with both $f\left( A\mid \mathbf{Z}\right) $
and $b\left( A,\mathbf{Z}\right) $ estimated via smoothing techniques. Note
that models that only place complexity type assumptions on $f\left( A\mid 
\mathbf{Z}\right) $ and/or $b\left( A,\mathbf{Z}\right) ,$ ignore any
restriction that could be possibly implied on the law of $\left( A,\mathbf{Z}%
,Y\right) $ by the Bayesian Network $\mathcal{M}\left( \mathcal{G}\right) .$
We will refer to estimators that are regular and asymptotically linear with
influence function $\psi _{P,\pi }\left( \mathbf{Z};\mathcal{G}\right) $\
defined in $\left( \ref{eq:inf_fc_sing}\right) $ as non-parametric
estimators and, for brevity, we will designate them as NP-$\mathbf{Z}$
estimators. It follows from the discussion above that all NP-$\mathbf{Z}$
estimators $\widehat{\chi }_{\pi ,\mathbf{Z}}$ satisfy that $\sqrt{n}\left\{ 
\widehat{\chi }_{\pi ,\mathbf{Z}}-\chi _{\pi }\left( P;\mathcal{G}\right)
\right\} $ converges in distribution to $N\left\{ 0,\sigma _{\pi ,\mathbf{Z}%
}^{2}\left( P\right) \right\} $ where $\sigma _{\pi ,\mathbf{Z}}^{2}\left(
P\right) \equiv var_{P}\left\{ \psi _{P,\pi }\left( \mathbf{Z};\mathcal{G}%
\right) \right\} .$

On the class of static adjustment sets we define the preorder (a reflexive
and transitive binary relation) $\preceq $ as follows 
\begin{equation*}
\mathbf{Z}^{\prime }\preceq \mathbf{Z}\text{ if and only if }\sigma _{\pi ,%
\mathbf{Z}^{\prime}}^{2}\left( P\right) \leq \sigma _{\pi ,\mathbf{Z}}^{2}\left( P\right) \text{ for all }P\in \mathcal{M(G)}\text{ and all }\pi
(A\mid \mathbf{L})=I_{a}\left( A\right) \text{ for some }a
\end{equation*}
and on the class of $\mathbf{L}-\mathbf{N}$ dynamic adjustment sets we
define the preorder $\preceq_{\mathbf{L}}$ as follows%
\begin{equation*}
\mathbf{Z}^{\prime }\preceq_{\mathbf{L}}\mathbf{Z}\text{ if and only if }%
\sigma _{\pi ,\mathbf{Z}^{\prime}}^{2}\left( P\right) \leq \sigma _{\pi ,\mathbf{Z}}^{2}\left( P\right) \text{ for all }P\in \mathcal{M(G)}\text{ and
all }\pi (A\mid \mathbf{L})
\end{equation*}

\cite{eff-adj} showed that $\preceq $ is not a total preorder because there
exist graphs $\mathcal{G}$ and vertices $A,Y$ such that for some static
adjustment sets relative to $A$ and $Y$ in $\mathcal{G}$, say $\mathbf{Z}%
^{\prime }$ and $\mathbf{Z}$, it holds that $\sigma _{\pi ,\mathbf{Z}%
}^{2}\left( P\right) \leq \sigma _{\pi ,\mathbf{Z}^{\prime }}^{2}\left(
P\right) $ for some $P\in \mathcal{M(G)}$ but $\sigma _{\pi ,\mathbf{Z}%
}^{2}\left( P^{\prime }\right) >\sigma _{\pi ,\mathbf{Z}^{\prime
}}^{2}\left( P^{\prime }\right) $ for some other $P^{\prime }\in \mathcal{%
M(G)}$. A similar negative result was derived earlier by \cite{perkovic} for
comparing the variances of ordinary least squares estimators of the
coefficient of $A$ in the regression of $Y$ with covariates $A$ and the
adjustment set in question under the assumption that the causal graphical
model is linear, that is, that $\mathbf{V}=(V_{1},\dots, V_{s})$ satisfies $%
V_{i}= \sum_{V_{j}\in \pa_{\mathcal{G}}(V_{i})} \alpha_{ij}V_{j} +
\varepsilon_{i}, $ for $i\in \lbrace 1,\dots,s\rbrace$, where $\alpha_{ij}\in%
\mathbb{R}$ and $\varepsilon_{1},\dots,\varepsilon_{s}$ are jointly
independent random variables with zero mean and finite variance. However, 
\cite{perkovic} gave two graphical criteria for ordering certain pairs of
static adjustment sets in the aforementioned linear setting. \cite{eff-adj}
proved that the same graphical criterion applies in the non-parametric
setting. The preorder $\preceq_{\mathbf{L}}$ is not a total preorder because 
$\mathbf{Z}^{\prime }\preceq_{\mathbf{L}} \mathbf{Z}$ for some $\mathbf{L}$
implies $\mathbf{Z}^{\prime }\preceq \mathbf{Z}$. Our first result,
formalized in Lemmas \ref{lemma:supplementation} and \ref{lemma:deletion},
and in Proposition \ref{prop:compare_adj} in the next section extends the
graphical criteria of \cite{perkovic} and \cite{eff-adj} to $\mathbf{L}-%
\mathbf{N}$ dynamic adjustment sets.

\cite{perkovic} and \cite{eff-adj} in the linear and non-parametric settings
respectively showed that a globally optimal adjustment set $\mathbf{O}$
satisfying $\mathbf{O}\preceq \mathbf{Z}$ for any other static adjustment
set $\mathbf{Z}$ always exists and they provided a graphical
characterization of it. They also showed, by exhibiting counterexamples,
that in graphs with hidden variables which admit observable static
adjustment sets there may not exist an optimal static adjustment set among
the observable ones. Because static interventions are a special of dynamic
interventions with $\mathbf{L}=\emptyset $, the same assertions hold for $%
\mathbf{L-N}$ dynamic adjustment sets. In Section \ref{sec:new_graph} we
will provide a sufficient graphical condition for a globally optimal $%
\mathbf{L}-\mathbf{N}$ dynamic adjustment set to exist in graphs with hidden
variables. Subsequently, we will demonstrate that for $\left( \mathbf{L},%
\mathbf{N}\right) $ admissible pairs, an optimal adjustment set always
exists among minimal $\mathbf{L}-\mathbf{N}$ dynamic adjustment sets and we
will provide a graphical characterization of it. Our results extend the
results of \cite{eff-adj} who gave a graphical characterization of the
minimal optimal static adjustment set in graphs without hidden variables.
Finally, we will show that, for $\left( \mathbf{L},\mathbf{N}\right) $
admissible pairs, in the class of minimum $\mathbf{L-N}$ dynamic adjustment
sets there always exists an optimal one and we will provide a graphical
characterization of it. Our results can be applied, in particular, to
determine the optimal static adjustment set among the minimum static
adjustment sets. To our knowledge neither the proof of the existence of an
optimal minimum static or $\mathbf{L-N}$ dynamic adjustment set nor a
graphical characterization of it are available in the existing literature.

\section{Comparing dynamic adjustment sets}

\label{sec:compare} We start this section by establishing two Lemmas which
entail the possibility of ordering certain pairs of $\mathbf{L}-\mathbf{N}$
dynamic adjustment sets as indicated in the preceding section. These Lemmas
extend Lemmas 1 and 2 of \cite{eff-adj} from static to $\mathbf{L}-\mathbf{N}
$ dynamic adjustment sets. Throughout this section all results assume that $(%
\mathbf{L,N})$ is an admissible pair with respect to $A,Y$ in $\mathcal{G}$.
Also, all adjustment sets, static or dynamic are with respect to $A,Y$ in $%
\mathcal{G}$.

\begin{lemma}[Supplementation with precision variables]
\label{lemma:supplementation} Let $\mathbf{B}$ be an $\mathbf{L}-\mathbf{N}$
dynamic adjustment set and let $\mathbf{G}\subset \mathbf{V}$ satisfy 
\begin{equation*}
A\perp \!\!\!\perp _{\mathcal{G}}\mathbf{G}\mid \mathbf{B}.
\end{equation*}%
Then $\mathbf{G}\cup \mathbf{B}$ is also an $\mathbf{L}-\mathbf{N}$ dynamic
adjustment set and for any $\pi (A\mid \mathbf{L})$ and all $P\in \mathcal{M}%
\left( \mathcal{G}\right) $ 
\begin{equation}
\sigma _{\pi ,\mathbf{B}}^{2}\left( P\right) -\sigma _{\pi ,\mathbf{G},%
\mathbf{B}}^{2}\left( P\right) =\mathbf{1}^{\top }var_{P}(\mathbf{S})\mathbf{%
1}\geq 0,  \notag
\end{equation}%
where $\mathbf{S}=\left( S_{a}\right) _{a\in \mathcal{A}}$, 
\begin{equation*}
S_{a}\equiv \left\{ \frac{I_{a}(A)}{f(a\mid \mathbf{G},\mathbf{B})}%
-1\right\} \pi (a\mid \mathbf{L})\left\{ b(a,\mathbf{G},\mathbf{B};P)-b(a,%
\mathbf{B};P)\right\} ,
\end{equation*}%
and $\mathbf{1}$ is a vector of ones with dimension equal to $\#\mathcal{A}$%
. Moreover, 
\begin{align*}
& var_{P}(S_{a})=E_{P}\left\{ var_{P}\left[ b(a,\mathbf{G},\mathbf{B};P)\mid 
\mathbf{B}\right] \pi (a\mid \mathbf{L})^{2}\left[ \frac{1}{f(a\mid \mathbf{B%
})}-1\right] \right\} , \\
& cov_{P}(S_{a},S_{a^{\prime }})=-E_{P}\left[ \pi (a\mid \mathbf{L})\pi
(a^{\prime }\mid \mathbf{L})cov_{P}\left\{ b(a,\mathbf{G},\mathbf{B}%
;P),b(a^{\prime },\mathbf{G},\mathbf{B};P)|\mathbf{B}\right\} \right] .
\end{align*}
\end{lemma}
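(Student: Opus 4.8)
The plan is to work directly from the influence-function formula \eqref{eq:inf_fc_sing} and compute the variance difference as the variance of a sum of orthogonal-looking pieces. First I would verify that $\mathbf{G}\cup\mathbf{B}$ is again an $\mathbf{L}-\mathbf{N}$ dynamic adjustment set: since $\mathbf{B}$ is one and $\mathbf{B}\subset \mathbf{G}\cup\mathbf{B}\subset\mathbf{N}$, it suffices by Proposition \ref{prop:equiv_dtr} to check that $\mathbf{G}\cup\mathbf{B}$ is a static adjustment set, which follows from the characterization of \cite{shpitser-adjustment} together with $A\perp\!\!\!\perp_{\mathcal G}\mathbf{G}\mid\mathbf{B}$ (adding variables d-separated from $A$ given a valid adjustment set keeps it valid: it does not open non-causal paths, and it does not touch the forbidden set argument since the causal paths from $A$ to $Y$ are unchanged). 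Once validity is established, both $\chi_{\pi,\mathbf{B}}(P;\mathcal G)$ and $\chi_{\pi,\mathbf{G},\mathbf{B}}(P;\mathcal G)$ equal $\chi_\pi(P;\mathcal G)$, so the constant terms in the two influence functions cancel and the difference $\psi_{P,\pi}(\mathbf{B};\mathcal G)-\psi_{P,\pi}(\mathbf{G},\mathbf{B};\mathcal G)$ is a mean-zero random variable whose variance is exactly $\sigma^2_{\pi,\mathbf{B}}(P)-\sigma^2_{\pi,\mathbf{G},\mathbf{B}}(P)$ provided the cross term vanishes.

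The core computation is therefore: (a) write $\psi_{P,\pi}(\mathbf{B})-\psi_{P,\pi}(\mathbf{G},\mathbf{B})$ explicitly, (b) show it equals $\sum_{a\in\mathcal A}S_a=\mathbf{1}^\top\mathbf{S}$, and (c) show $\mathrm{cov}_P\{\psi_{P,\pi}(\mathbf{G},\mathbf{B}),\,\mathbf{1}^\top\mathbf{S}\}=0$, whence $\sigma^2_{\pi,\mathbf{B}}=\sigma^2_{\pi,\mathbf{G},\mathbf{B}}+\mathrm{var}_P(\mathbf{1}^\top\mathbf{S})=\sigma^2_{\pi,\mathbf{G},\mathbf{B}}+\mathbf{1}^\top\mathrm{var}_P(\mathbf{S})\mathbf{1}$. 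For (b), the key algebraic identity is that the IPW residual term splits: using $\pi(A\mid\mathbf L)=\sum_a I_a(A)\pi(a\mid\mathbf L)$ and $b(A,\mathbf B;P)=E_P\{b(A,\mathbf G,\mathbf B;P)\mid A,\mathbf B\}$ (tower property, valid because $\mathbf G$ adds nothing to the $Y$-regression beyond $A,\mathbf B$ — more precisely $b(A,\mathbf{B};P)$ is the conditional expectation of $b(A,\mathbf{G},\mathbf{B};P)$), one rewrites the two influence functions and collects terms indexed by $a$; the $\{Y-b(a,\mathbf G,\mathbf B;P)\}$-type pieces cancel against each other and what survives is precisely $S_a$, after also using that $E_{\pi^*_{\mathbf B}}\{b(A,\mathbf B;P)\mid\mathbf B\}-E_{\pi^*_{\mathbf G,\mathbf B}}\{b(A,\mathbf G,\mathbf B;P)\mid\mathbf G,\mathbf B\}$ contributes the ``$-1$'' part of the factor $\{I_a(A)/f(a\mid\mathbf G,\mathbf B)-1\}$. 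I would double-check the indexing carefully here since getting the $\pi(a\mid\mathbf L)$ and $f(a\mid\cdot)$ factors in the right place is where sign/placement errors creep in.

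For (c), the orthogonality, the cleanest route is to condition on $(A,\mathbf G,\mathbf B)$: the NP-$(\mathbf G,\mathbf B)$ influence function has the form $\frac{\pi}{f(A\mid\mathbf G,\mathbf B)}\{Y-b(A,\mathbf G,\mathbf B;P)\}+(\text{function of }\mathbf G,\mathbf B)$, and the first part has conditional mean zero given $(A,\mathbf G,\mathbf B)$ while $\mathbf{1}^\top\mathbf{S}$ is a function of $(A,\mathbf G,\mathbf B)$ only; the second part is a function of $(\mathbf G,\mathbf B)$ and I pair it with $E_P\{\mathbf{1}^\top\mathbf S\mid\mathbf G,\mathbf B\}$, which I claim is zero because $E_P\{I_a(A)/f(a\mid\mathbf G,\mathbf B)-1\mid\mathbf G,\mathbf B\}=0$ and $b(a,\mathbf G,\mathbf B;P)-b(a,\mathbf B;P)$ and $\pi(a\mid\mathbf L)$ are $\mathbf{G,B}$-measurable once we further condition — wait, $\pi(a\mid\mathbf L)$ need not be $(\mathbf G,\mathbf B)$-measurable, so here I would instead condition on $(\mathbf{G},\mathbf{B},\mathbf{L})$ or note $\mathbf L\subset\mathbf B$ so $\pi(a\mid\mathbf L)$ \emph{is} $\mathbf B$-measurable; that resolves it cleanly. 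Finally, for the stated formulas for $\mathrm{var}_P(S_a)$ and $\mathrm{cov}_P(S_a,S_{a'})$ with $a\neq a'$: iterate expectations conditioning first on $(\mathbf G,\mathbf B)$ to kill cross terms involving $\{I_a(A)/f(a\mid\mathbf G,\mathbf B)-1\}\{I_{a'}(A)/f(a'\mid\mathbf G,\mathbf B)-1\}$, whose conditional expectation given $(\mathbf G,\mathbf B)$ is $-1$ when $a\neq a'$ (since $I_a(A)I_{a'}(A)=0$) and $1/f(a\mid\mathbf G,\mathbf B)-1$ when $a=a'$; then take a further conditional expectation given $\mathbf B$ and use the tower property $b(a,\mathbf B;P)=E_P\{b(a,\mathbf G,\mathbf B;P)\mid\mathbf B\}$ to turn $E_P[\{b(a,\mathbf G,\mathbf B;P)-b(a,\mathbf B;P)\}\{b(a',\mathbf G,\mathbf B;P)-b(a',\mathbf B;P)\}\mid\mathbf B]$ into $\mathrm{cov}_P\{b(a,\mathbf G,\mathbf B;P),b(a',\mathbf G,\mathbf B;P)\mid\mathbf B\}$, and for $a=a'$ into $\mathrm{var}_P\{b(a,\mathbf G,\mathbf B;P)\mid\mathbf B\}$. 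The main obstacle is purely bookkeeping: tracking the conditioning sets so that the tower-property reductions are legitimate and the $\pi(a\mid\mathbf L)$ factors (which are $\mathbf B$-measurable but not $(\mathbf G,\mathbf B)$-conditionally constant in a useful way) are handled at the right stage; the conceptual content — orthogonal decomposition of influence functions plus $b(a,\mathbf B;P)$ being the $\mathbf B$-projection of $b(a,\mathbf G,\mathbf B;P)$ — is routine once the validity of $\mathbf{G}\cup\mathbf{B}$ is in hand.
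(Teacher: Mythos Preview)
Your plan is essentially the paper's proof: decompose the influence function as $\psi_{P,\pi}=\sum_a\psi_{P,\pi,a}$, show $\psi_{P,\pi,a}(\mathbf{B})=\psi_{P,\pi,a}(\mathbf{G},\mathbf{B})+S_a$, use that $E_P\{S_a\mid\mathbf{G},\mathbf{B}\}=0$ to get orthogonality, and then iterate expectations to obtain the variance and covariance formulas. The structure is right and would go through.

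Two slips in your running commentary are worth flagging, because they obscure the one identity that actually does all the work. First, you write that the tower step is ``valid because $\mathbf{G}$ adds nothing to the $Y$-regression beyond $A,\mathbf{B}$'': this is backwards. The hypothesis is $A\perp\!\!\!\perp_{\mathcal G}\mathbf{G}\mid\mathbf{B}$, so what is unchanged is the \emph{propensity score}, $f(a\mid\mathbf{G},\mathbf{B})=f(a\mid\mathbf{B})$; the outcome regression $b(a,\mathbf{G},\mathbf{B};P)$ typically \emph{does} differ from $b(a,\mathbf{B};P)$ (that difference is precisely what appears in $S_a$). The paper uses $f(a\mid\mathbf{G},\mathbf{B})=f(a\mid\mathbf{B})$ explicitly at every step: to show $\mathbf{G}\cup\mathbf{B}$ is an adjustment set, to make the $Y$-terms cancel in the IPW pieces, and to convert the $1/f(a\mid\mathbf{G},\mathbf{B})-1$ factor you obtain into the stated $1/f(a\mid\mathbf{B})-1$. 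You never write this identity down, and without it your claim that the $\{Y-b\}$ pieces ``cancel against each other'' is false. Second, the identity $b(a,\mathbf{B};P)=E_P\{b(a,\mathbf{G},\mathbf{B};P)\mid\mathbf{B}\}$ is not pure tower: tower gives the conditional expectation given $A=a,\mathbf{B}$, and dropping the $A=a$ again requires $A\perp\!\!\!\perp_{\mathcal G}\mathbf{G}\mid\mathbf{B}$. Once you foreground $f(a\mid\mathbf{G},\mathbf{B})=f(a\mid\mathbf{B})$ and this corrected tower step, your bookkeeping plan matches the paper line for line.
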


\begin{lemma}[Deletion of overadjustment variables]
\label{lemma:deletion} Let $\mathbf{G}\cup \mathbf{B}$ be an $\mathbf{L}-%
\mathbf{N}$ dynamic adjustment set with $\mathbf{G}$ and $\mathbf{B}$
disjoint and suppose 
\begin{equation*}
\mathbf{L}\subset \mathbf{G}\quad \text{and}\quad Y\text{ }\mathbf{\perp
\!\!\!\perp }_{\mathcal{G}}\text{ }\mathbf{B}\text{ }\mathbf{|}\text{ }%
\mathbf{G},A.
\end{equation*}%
Then $\mathbf{G}$ is also an $\mathbf{L}-\mathbf{N}$ dynamic adjustment set
and for any $\pi (A\mid \mathbf{L})$ and all $P\in \mathcal{M}\left( 
\mathcal{G}\right) $ 
\begin{align}
& \sigma _{\pi ,\mathbf{G},\mathbf{B}}^{2}\left( P\right) -\sigma _{\pi ,%
\mathbf{G}}^{2}\left( P\right) =  \notag \\
& \sum\limits_{a\in \mathcal{A}}\left( E_{P}\left[ \pi ^{2}(a\mid \mathbf{L}%
)f(a\mid \mathbf{G})var_{P}(Y\mid A=a,\mathbf{G})var_{P}\left\{ \frac{1}{%
f(a\mid \mathbf{G,B})}\mid A=a,\mathbf{G}\right\} \right] \right) \geq 0. 
\notag
\end{align}
\end{lemma}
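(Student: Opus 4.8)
The plan is to prove the two assertions separately: first that $\mathbf{G}$ is an $\mathbf{L}-\mathbf{N}$ dynamic adjustment set, and then the variance identity.

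For the first assertion I would exploit the hypothesis $Y\ort_{\mathcal{G}}\mathbf{B}\mid\mathbf{G},A$ through the global Markov property: under every $P\in\mathcal{M}(\mathcal{G})$ it gives $E_P\{I_{(-\infty,y]}(Y)\mid A,\mathbf{G},\mathbf{B}\}=E_P\{I_{(-\infty,y]}(Y)\mid A,\mathbf{G}\}$ for all $y$. Since $\mathbf{L}\subset\mathbf{G}$, the inner $\pi^{\ast}_{\mathbf{G},\mathbf{B}}$-expectation on the right of \eqref{eq:L-dep} for the adjustment set $\mathbf{G}\cup\mathbf{B}$ collapses to $\sum_{a}\pi(a\mid\mathbf{L})E_P\{I_{(-\infty,y]}(Y)\mid A=a,\mathbf{G}\}$, which is a function of $\mathbf{G}$ alone, so its outer expectation over $(\mathbf{G},\mathbf{B})$ equals its outer expectation over $\mathbf{G}$; that is precisely the right-hand side of \eqref{eq:L-dep} for the adjustment set $\mathbf{G}$. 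Because $\mathbf{G}\cup\mathbf{B}$ is a dynamic adjustment set the left-hand sides coincide, so \eqref{eq:L-dep} holds for $\mathbf{G}$; combined with $\mathbf{L}\subset\mathbf{G}\subset\mathbf{N}$ this shows $\mathbf{G}$ is an $\mathbf{L}-\mathbf{N}$ dynamic adjustment set. (Alternatively one could invoke Proposition~\ref{prop:equiv_dtr} and the static characterizations of \cite{shpitser-adjustment} or \cite{vanAI}.)

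For the variance identity, the same d-separation hypothesis yields $b(A,\mathbf{G},\mathbf{B};P)=b(A,\mathbf{G};P)$ and $var_P(Y\mid A,\mathbf{G},\mathbf{B})=var_P(Y\mid A,\mathbf{G})$ under every $P\in\mathcal{M}(\mathcal{G})$. Substituting these, together with $\mathbf{L}\subset\mathbf{G}$, into \eqref{eq:inf_fc_sing} for $\mathbf{Z}=\mathbf{G}\cup\mathbf{B}$ and $\mathbf{Z}=\mathbf{G}$, the $\mathbf{G}$-measurable terms and the $\chi_{\pi}$ term cancel in the difference, leaving
\begin{equation*}
\psi_{P,\pi}(\mathbf{G}\cup\mathbf{B};\mathcal{G})-\psi_{P,\pi}(\mathbf{G};\mathcal{G})=\pi(A\mid\mathbf{L})\left\{\frac{1}{f(A\mid\mathbf{G},\mathbf{B})}-\frac{1}{f(A\mid\mathbf{G})}\right\}\{Y-b(A,\mathbf{G};P)\}=:D.
\end{equation*}
Conditioning on $(A,\mathbf{G},\mathbf{B})$ and using $b(A,\mathbf{G};P)=E_P(Y\mid A,\mathbf{G},\mathbf{B})$ gives $E_P(D\mid A,\mathbf{G},\mathbf{B})=0$, hence $E_P(D)=0$ and $E_P(D\mid\mathbf{G})=0$. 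The crux is $cov_P\{\psi_{P,\pi}(\mathbf{G};\mathcal{G}),D\}=0$: writing $\psi_{P,\pi}(\mathbf{G};\mathcal{G})=\frac{\pi(A\mid\mathbf{L})}{f(A\mid\mathbf{G})}\{Y-b(A,\mathbf{G};P)\}+R(\mathbf{G})$ with $R(\mathbf{G})$ a function of $\mathbf{G}$, the cross term with $R(\mathbf{G})$ vanishes because $E_P(D\mid\mathbf{G})=0$, while the cross term with the weighting residual reduces, after conditioning on $(A,\mathbf{G},\mathbf{B})$ and then on $(A,\mathbf{G})$, to the factor $E_P\{f(A\mid\mathbf{G},\mathbf{B})^{-1}\mid A=a,\mathbf{G}\}-f(a\mid\mathbf{G})^{-1}$, which is zero by the one-line Bayes-rule identity $E_P\{f(A\mid\mathbf{G},\mathbf{B})^{-1}\mid A=a,\mathbf{G}\}=f(a\mid\mathbf{G})^{-1}$. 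Hence $\sigma_{\pi,\mathbf{G},\mathbf{B}}^2(P)-\sigma_{\pi,\mathbf{G}}^2(P)=var_P(D)$.

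Finally I would rewrite $var_P(D)=E_P(D^2)$. Since $A$ takes values in the finite set $\mathcal{A}$, I split it as $\sum_{a\in\mathcal{A}}E_P[I_a(A)\pi^2(a\mid\mathbf{L})\{f(a\mid\mathbf{G},\mathbf{B})^{-1}-f(a\mid\mathbf{G})^{-1}\}^2\{Y-b(a,\mathbf{G};P)\}^2]$; conditioning first on $(A,\mathbf{G},\mathbf{B})$ (which turns $\{Y-b\}^2$ into $var_P(Y\mid A=a,\mathbf{G})$ and $I_a(A)$ into $f(a\mid\mathbf{G},\mathbf{B})$) and then on $\mathbf{G}$, and using the two Bayes identities $E_P\{f(a\mid\mathbf{G},\mathbf{B})\mid\mathbf{G}\}=f(a\mid\mathbf{G})$ and $E_P\{f(a\mid\mathbf{G},\mathbf{B})^{-2}\mid A=a,\mathbf{G}\}=f(a\mid\mathbf{G})^{-1}E_P\{f(a\mid\mathbf{G},\mathbf{B})^{-1}\mid\mathbf{G}\}$, one checks that $E_P[f(a\mid\mathbf{G},\mathbf{B})\{f(a\mid\mathbf{G},\mathbf{B})^{-1}-f(a\mid\mathbf{G})^{-1}\}^2\mid\mathbf{G}]$ equals $f(a\mid\mathbf{G})\,var_P\{f(a\mid\mathbf{G},\mathbf{B})^{-1}\mid A=a,\mathbf{G}\}$. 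This produces exactly the displayed sum, each term of which is a product of non-negative quantities, so the inequality follows. I expect the main obstacle to be the iterated-conditioning bookkeeping in this last step — choosing the conditioning sets so that the Bayes identities apply and the expression collapses into a genuine conditional variance — together with the orthogonality $cov_P\{\psi_{P,\pi}(\mathbf{G};\mathcal{G}),D\}=0$, which again hinges on the identity $E_P\{f(A\mid\mathbf{G},\mathbf{B})^{-1}\mid A=a,\mathbf{G}\}=f(a\mid\mathbf{G})^{-1}$.
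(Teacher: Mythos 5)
Your proposal is correct and takes essentially the same route as the paper: both arguments rest on the two facts $b(A,\mathbf{G},\mathbf{B};P)=b(A,\mathbf{G};P)$ and $E_{P}\{f(A\mid \mathbf{G},\mathbf{B})^{-1}\mid A,\mathbf{G}\}=f(A\mid \mathbf{G})^{-1}$ (the paper cites this as Lemma 10 of \cite{eff-adj}; you rederive it by Bayes' rule), and both amount to the orthogonal decomposition $\psi_{P,\pi}(\mathbf{G},\mathbf{B};\mathcal{G})=\psi_{P,\pi}(\mathbf{G};\mathcal{G})+D$ with $var_{P}(D)$ collapsing, after iterated conditioning, to the displayed sum over $a\in\mathcal{A}$. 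The only difference is bookkeeping: the paper works per treatment level $a$ and gets the orthogonality for free from the law of total variance by conditioning on $(A,Y,\mathbf{G})$ (integrating out $\mathbf{B}$), whereas you verify $cov_{P}\{\psi_{P,\pi}(\mathbf{G};\mathcal{G}),D\}=0$ directly (integrating out $Y$ first), which is equivalent and yields the identical formula.
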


A straightforward consequence of Lemmas \ref{lemma:supplementation} and \ref%
{lemma:deletion} is Proposition \ref{prop:compare_adj} below, which shows
that the graphical criteria to compare certain pairs of static adjustment
sets in \cite{perkovic} and \cite{eff-adj} is also valid for comparing $%
\mathbf{L}-\mathbf{N}$ dynamic adjustment sets.

\begin{proposition}
\label{prop:compare_adj} Suppose $\mathbf{G}$ and $\mathbf{B}$ are two $%
\mathbf{L}-\mathbf{N}$ dynamic adjustment sets such that%
\begin{equation}
A\perp \!\!\!\perp _{\mathcal{G}}\mathbf{G\backslash B}\mid \mathbf{B}\quad 
\text{and}\quad Y\perp \!\!\!\perp _{\mathcal{G}}\mathbf{B\backslash G}\mid 
\mathbf{G},A.  \notag
\end{equation}%
Then, for any $\pi (A\mid \mathbf{L})$ and all $P\in \mathcal{M}\left( 
\mathcal{G}\right) $ 
\begin{eqnarray*}
&&\sigma _{\pi ,\mathbf{B}}^{2}\left( P\right) -\sigma _{\pi ,\mathbf{G}%
}^{2}\left( P\right) =\mathbf{1}^{\top }var_{P}(\mathbf{S})\mathbf{1}+ \\
&&\sum\limits_{a\in \mathcal{A}}\left( E_{P}\left[ \pi ^{2}(a\mid \mathbf{L}%
)f(a\mid \mathbf{G})var_{P}(Y\mid A=a,\mathbf{G})var_{P}\left\{ \frac{1}{%
f(a\mid \mathbf{G,B})}\mid A=a,\mathbf{G}\right\} \right] \right) \geq 0,
\end{eqnarray*}%
where $\mathbf{S}$ is defined as in Lemma \ref{lemma:supplementation}.
\end{proposition}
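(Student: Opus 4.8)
The plan is to interpolate between $\mathbf{B}$ and $\mathbf{G}$ through the union $\mathbf{G}\cup\mathbf{B}$: I would use Lemma \ref{lemma:supplementation} to handle the step $\mathbf{B}\rightsquigarrow\mathbf{G}\cup\mathbf{B}$, Lemma \ref{lemma:deletion} to handle the step $\mathbf{G}\cup\mathbf{B}\rightsquigarrow\mathbf{G}$, and then add the two resulting variance identities.

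First I would apply Lemma \ref{lemma:supplementation} with its dynamic adjustment set taken to be $\mathbf{B}$ and its supplementary set taken to be $\mathbf{G}\setminus\mathbf{B}$. The proposition's first hypothesis, $A\perp \!\!\!\perp_{\mathcal{G}}\mathbf{G}\setminus\mathbf{B}\mid\mathbf{B}$, is precisely the d-separation required by that lemma, so it yields that $(\mathbf{G}\setminus\mathbf{B})\cup\mathbf{B}=\mathbf{G}\cup\mathbf{B}$ is again an $\mathbf{L}-\mathbf{N}$ dynamic adjustment set and that $\sigma^2_{\pi,\mathbf{B}}(P)-\sigma^2_{\pi,\mathbf{G}\cup\mathbf{B}}(P)=\mathbf{1}^\top var_{P}(\mathbf{S})\mathbf{1}\ge 0$. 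Since conditioning on $\mathbf{G}\setminus\mathbf{B}$ together with $\mathbf{B}$ is the same as conditioning on $\mathbf{G}\cup\mathbf{B}$, and likewise conditioning on $\mathbf{G}$ together with $\mathbf{B}$, we have $b(a,\mathbf{G}\setminus\mathbf{B},\mathbf{B};P)=b(a,\mathbf{G},\mathbf{B};P)$ and $f(a\mid\mathbf{G}\setminus\mathbf{B},\mathbf{B})=f(a\mid\mathbf{G},\mathbf{B})$, so the vector $\mathbf{S}$ delivered by the lemma coincides with the one in the proposition statement.

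Next I would apply Lemma \ref{lemma:deletion} to the dynamic adjustment set $\mathbf{G}\cup\mathbf{B}$, written as the disjoint union of $\mathbf{G}$ and $\mathbf{B}\setminus\mathbf{G}$. Its three hypotheses hold: $\mathbf{G}\cup(\mathbf{B}\setminus\mathbf{G})=\mathbf{G}\cup\mathbf{B}$ is a dynamic adjustment set by the previous step; $\mathbf{L}\subset\mathbf{G}$ because $\mathbf{G}$ is itself an $\mathbf{L}-\mathbf{N}$ dynamic adjustment set; and $Y\perp \!\!\!\perp_{\mathcal{G}}\mathbf{B}\setminus\mathbf{G}\mid\mathbf{G},A$ is the proposition's second hypothesis. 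Hence $\mathbf{G}$ is a dynamic adjustment set and $\sigma^2_{\pi,\mathbf{G}\cup\mathbf{B}}(P)-\sigma^2_{\pi,\mathbf{G}}(P)$ equals the $\sum_{a\in\mathcal{A}}$ term in the statement, once one again uses that conditioning on $\mathbf{G}$ together with $\mathbf{B}\setminus\mathbf{G}$ is the same as conditioning on $\mathbf{G}\cup\mathbf{B}$, so that $f(a\mid\mathbf{G},\mathbf{B}\setminus\mathbf{G})=f(a\mid\mathbf{G},\mathbf{B})$.

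Finally I would add the two identities: the left-hand sides telescope to $\sigma^2_{\pi,\mathbf{B}}(P)-\sigma^2_{\pi,\mathbf{G}}(P)$, the right-hand sides sum to the claimed expression, and nonnegativity is inherited term by term from the two lemmas. I do not expect a genuine obstacle here; the only points requiring care are the purely combinatorial bookkeeping that $\mathbf{G}\setminus\mathbf{B}$ and $\mathbf{B}\setminus\mathbf{G}$ are the correct choices of ``supplementary'' and ``deleted'' sets so that the proposition's two d-separations line up verbatim with the hypotheses of Lemmas \ref{lemma:supplementation} and \ref{lemma:deletion}, and the elementary observation that the conditioning sets $(\mathbf{G}\setminus\mathbf{B})\cup\mathbf{B}$, $\mathbf{G}\cup(\mathbf{B}\setminus\mathbf{G})$ and $\mathbf{G}\cup\mathbf{B}$ all coincide, so that the outcome-regression and propensity functions appearing in the three statements agree.
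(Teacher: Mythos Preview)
Your proposal is correct and follows exactly the paper's own argument: decompose $\sigma_{\pi,\mathbf{B}}^{2}-\sigma_{\pi,\mathbf{G}}^{2}$ through the intermediate set $\mathbf{G}\cup\mathbf{B}$ and apply Lemma~\ref{lemma:supplementation} to the first difference and Lemma~\ref{lemma:deletion} to the second. The bookkeeping you spell out (that $(\mathbf{G}\setminus\mathbf{B})\cup\mathbf{B}=\mathbf{G}\cup(\mathbf{B}\setminus\mathbf{G})=\mathbf{G}\cup\mathbf{B}$, and that $\mathbf{L}\subset\mathbf{G}$ since $\mathbf{G}$ is an $\mathbf{L}$-$\mathbf{N}$ dynamic adjustment set) is precisely what the paper leaves implicit.
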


\medskip 
\begin{proof}
Write $\sigma _{\pi,\mathbf{B}}^{2}-\sigma _{\pi,\mathbf{G}}^{2}=\sigma _{\pi,\mathbf{B}%
}^{2}-\sigma _{\pi,\mathbf{B\cup }\left( \mathbf{G\backslash B}\right)
}^{2}+\sigma _{\pi,\mathbf{G\cup }\left( \mathbf{B\backslash G}\right)
}^{2}-\sigma _{\pi,\mathbf{G}}^{2}$ and apply Lemmas \ref{lemma:supplementation}
and \ref{lemma:deletion}.
\end{proof}

Define 
\begin{equation*}
\mathbf{O}(A,Y,\mathcal{G})\equiv \pa_{\mathcal{G}}(\cn(A,Y,\mathcal{G}%
))\setminus \forb(A,Y,\mathcal{G})\quad \text{and}\quad \mathbf{O}(A,Y,%
\mathbf{L},\mathcal{G})\equiv \mathbf{O}(A,Y,\mathcal{G})\cup \mathbf{L}.
\end{equation*}
\cite{perkovic} and \cite{eff-adj} in the linear setting and the
non-parametric setting respectively showed that $\mathbf{O}(A,Y,\mathcal{G})$
is the globally optimal static adjustment set in graphs with no hidden
variables$\mathbf{.}$ We will now establish that the set $\mathbf{O}(A,Y,%
\mathbf{L},\mathcal{G})$ is a globally optimal $\mathbf{L}-\mathbf{N}$
dynamic adjustment set in graphs with no hidden variables, i.e. when $%
\mathbf{N}=\mathbf{V.}$

\begin{proposition}
\label{prop:opt_adj}Suppose that $\mathbf{N}=\mathbf{V}$ where $\mathbf{V}$
is the set of all the vertices in $\mathcal{G}$. Then $\mathbf{O}\equiv 
\mathbf{O}(A,Y,\mathbf{L},\mathcal{G})$ is an $\mathbf{L}-\mathbf{N}$
dynamic adjustment set and for any other $\mathbf{L}-\mathbf{N}$ dynamic
adjustment set $\mathbf{Z}$ it holds that 
\begin{equation*}
A\perp \!\!\!\perp _{\mathcal{G}}\mathbf{O}\setminus \mathbf{Z}\mid \mathbf{Z%
}\quad \text{and}\quad Y\perp \!\!\!\perp _{\mathcal{G}}\mathbf{Z}\setminus 
\mathbf{O}\mid A,\mathbf{O}.
\end{equation*}%
Consequently, $\mathbf{O}(A,Y,\mathbf{L},\mathcal{G})\preceq_{\mathbf{L}}%
\mathbf{Z}$ for any $\mathbf{L}-\mathbf{N}$ dynamic adjustment set $\mathbf{Z%
}$.
\end{proposition}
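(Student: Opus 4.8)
The plan is to reduce the statement to the adjustment criterion of \cite{shpitser-adjustment} together with two d-separation claims that I would verify by path-surgery arguments. First, by Proposition~\ref{prop:equiv_dtr}, an $\mathbf{L}-\mathbf{N}$ dynamic adjustment set is exactly a static adjustment set containing $\mathbf{L}$, so I would freely use that a set $\mathbf{Z}$ disjoint from $\{A,Y\}$ is a static adjustment set if and only if $\mathbf{Z}\cap\forb(A,Y,\mathcal{G})=\emptyset$ and $\mathbf{Z}$ blocks every proper non-causal path from $A$ to $Y$ in $\mathcal{G}$. Second, I would record that, \emph{granted} $\mathbf{O}$ is a valid adjustment set and the two displayed d-separations hold, the concluding inequality is immediate from Proposition~\ref{prop:compare_adj} applied with $\mathbf{G}=\mathbf{O}$ and $\mathbf{B}=\mathbf{Z}$, using $\mathbf{O}\setminus\mathbf{Z}=\mathbf{O}(A,Y,\mathcal{G})\setminus\mathbf{Z}$ (which holds because $\mathbf{L}\subseteq\mathbf{Z}$). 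So the work splits into (A) recording the structure of $\mathbf{O}$, (B) proving the two d-separations, and (C) proving $\mathbf{O}$ is a static adjustment set.

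For (A): every causal node is a proper descendant of $A$, hence $\forb(A,Y,\mathcal{G})\subseteq\de_{\mathcal{G}}(A)$; combined with $\mathbf{L}\subseteq\nd_{\mathcal{G}}(A)$ and the fact that $\mathbf{O}(A,Y,\mathcal{G})$ excludes $\forb(A,Y,\mathcal{G})$ by definition, this gives $\mathbf{O}\cap\forb(A,Y,\mathcal{G})=\emptyset$. Also $\mathbf{O}(A,Y,\mathcal{G})\subseteq\nd_{\mathcal{G}}(A)$: a parent $W$ of a causal node $V$ that were a proper descendant of $A$ would, via the concatenation $A\rightsquigarrow W\to V\rightsquigarrow Y$, itself be a causal node and hence forbidden, contradicting $W\in\mathbf{O}(A,Y,\mathcal{G})$; thus $\mathbf{O}\subseteq\nd_{\mathcal{G}}(A)$. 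For the first d-separation $A\perp\!\!\!\perp_{\mathcal{G}}\mathbf{O}\setminus\mathbf{Z}\mid\mathbf{Z}$ I would argue by contradiction: fix $W\in\mathbf{O}\setminus\mathbf{Z}=\mathbf{O}(A,Y,\mathcal{G})\setminus\mathbf{Z}$ and a causal node $V$ with $W\to V$ and a causal path $V\rightsquigarrow Y$; if some path from $A$ to $W$ were active given $\mathbf{Z}$, then splicing it onto $W\to V\rightsquigarrow Y$ and deleting repeated vertices produces a proper non-causal path from $A$ to $Y$ — non-causal precisely because $W\in\nd_{\mathcal{G}}(A)$ precludes a directed $A$-to-$W$ segment — that is still active given $\mathbf{Z}$, since the appended part lies in $\cn(A,Y,\mathcal{G})\subseteq\forb(A,Y,\mathcal{G})$ (disjoint from $\mathbf{Z}$) and $W\notin\mathbf{Z}$ is a non-collider on it. This contradicts the adjustment criterion for $\mathbf{Z}$.

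The second d-separation $Y\perp\!\!\!\perp_{\mathcal{G}}\mathbf{Z}\setminus\mathbf{O}\mid A,\mathbf{O}$ is the step I expect to be the main obstacle. Fix $W\in\mathbf{Z}\setminus\mathbf{O}$; then $W\notin\forb(A,Y,\mathcal{G})$, and since $\mathbf{O}(A,Y,\mathcal{G})=\pa_{\mathcal{G}}(\cn(A,Y,\mathcal{G}))\setminus\forb(A,Y,\mathcal{G})\subseteq\mathbf{O}$ we also get $W\notin\pa_{\mathcal{G}}(\cn(A,Y,\mathcal{G}))$. Supposing $\delta$ is a shortest path from $Y$ to $W$ active given $\{A\}\cup\mathbf{O}$, I would derive a contradiction by tracking $\delta$ relative to the forbidden region $R\equiv\de_{\mathcal{G}}(\cn(A,Y,\mathcal{G}))$. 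The ingredients are: the neighbour of $Y$ on $\delta$ lies in $R$; any collider of $\delta$ lying in $R$ has all its descendants in $R$ (which is closed under taking descendants in $\mathcal{G}$), and $R$ is disjoint from $\{A\}\cup\mathbf{O}$, so that collider is inactive and already blocks $\delta$; and the first edge along which $\delta$ leaves $R$ points \emph{into} $R$, so its tail is a parent of a vertex of $R$. A short case analysis then pins down that tail: it is not $A$ (which, being a non-collider of $\delta$ in the conditioning set, would block $\delta$), it is not in $\forb(A,Y,\mathcal{G})$, and it is forced to be a parent of a \emph{causal} node, hence an element of $\mathbf{O}(A,Y,\mathcal{G})\subseteq\mathbf{O}$ that is a non-collider on $\delta$ — so it blocks $\delta$. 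The delicate point, the analogue of the corresponding argument in \cite{eff-adj} and \cite{perkovic} but now with $A$ adjoined to the conditioning set, is the sub-case where $\delta$ leaves $R$ at a vertex of $R\setminus\cn(A,Y,\mathcal{G})$, whose parent is not obviously a parent of a causal node; this is where the shortest-path choice of $\delta$ and the internal structure of $R$ have to be exploited.

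For (C) I would \emph{not} verify the adjustment criterion for $\mathbf{O}$ directly — adjoining $\mathbf{L}$ to $\mathbf{O}(A,Y,\mathcal{G})$ can in principle activate a collider on a back-door path, so validity is not automatic — but instead bootstrap from (B). Since $\mathbf{N}=\mathbf{V}$ and $\mathbf{L}\subseteq\nd_{\mathcal{G}}(A)$, the set $\mathbf{Z}_0\equiv\an_{\mathcal{G}}(\{A,Y\}\cup\mathbf{L})\setminus\forb(A,Y,\mathcal{G})$ is an $\mathbf{L}-\mathbf{N}$ static adjustment set by \cite{vanAI} (so $(\mathbf{L},\mathbf{N})$ is admissible), and $\mathbf{O}\subseteq\mathbf{Z}_0$ because parents of causal nodes are ancestors of $Y$ and $\mathbf{L}$ is disjoint from $\forb(A,Y,\mathcal{G})$. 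Applying the second d-separation just proved with $\mathbf{Z}=\mathbf{Z}_0$ gives $Y\perp\!\!\!\perp_{\mathcal{G}}\mathbf{Z}_0\setminus\mathbf{O}\mid A,\mathbf{O}$, so Lemma~\ref{lemma:deletion} with $\mathbf{G}=\mathbf{O}$ and $\mathbf{B}=\mathbf{Z}_0\setminus\mathbf{O}$ (note $\mathbf{L}\subseteq\mathbf{O}$) shows $\mathbf{O}$ is an $\mathbf{L}-\mathbf{N}$ dynamic adjustment set. With $\mathbf{O}$ and $\mathbf{Z}$ both valid and the two d-separations available, Proposition~\ref{prop:compare_adj} yields $\sigma_{\pi,\mathbf{Z}}^{2}(P)\ge\sigma_{\pi,\mathbf{O}}^{2}(P)$ for every $P\in\mathcal{M}(\mathcal{G})$ and every $\pi(A\mid\mathbf{L})$, that is, $\mathbf{O}\preceq_{\mathbf{L}}\mathbf{Z}$.
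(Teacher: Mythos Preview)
Your proposal is correct. The first d-separation is argued exactly as in the paper: you concatenate a putative active $A$--$W$ path with a causal tail $W\to V\rightsquigarrow Y$ through $\cn(A,Y,\mathcal{G})$ to produce a proper non-causal $A$--$Y$ path that the adjustment set $\mathbf{Z}$ must block. The other two parts take genuinely different routes. For $Y\perp\!\!\!\perp_{\mathcal{G}}\mathbf{Z}\setminus\mathbf{O}\mid A,\mathbf{O}$ the paper splits on whether the path has a collider, invoking Lemma~E.4 of \cite{perkovic} in the collider-free case and a closest-collider-plus-fork argument otherwise; your region-tracking through $R=\de_{\mathcal{G}}(\cn(A,Y,\mathcal{G}))$ is more self-contained and avoids the external lemma. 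For showing $\mathbf{O}$ is an adjustment set the paper makes a forward reference, identifying $\mathbf{O}$ with $\nb_{\mathcal{H}^1}(Y)$ and invoking Proposition~\ref{prop:equiv_dyn}; your bootstrap via $\mathbf{Z}_0=\an_{\mathcal{G}}(\{A,Y\}\cup\mathbf{L})\setminus\forb(A,Y,\mathcal{G})$ and Lemma~\ref{lemma:deletion} avoids that forward reference and stays within the tools of Section~\ref{sec:compare}.

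Your ``delicate point'' in fact dissolves, and neither the shortest-path hypothesis nor any special structure of $R$ beyond descendant-closure is needed. Write $\delta$ as $Y=V_0,V_1,\dots$ and take $k$ minimal with $V_k\notin R$; the exit edge is $V_k\to V_{k-1}$ since $R$ is closed under descendants. Any collider among $V_1,\dots,V_{k-1}$ lies in $R$ and is inactive (its descendants stay in $R$, which is disjoint from $\{A\}\cup\mathbf{O}$), contradicting openness of $\delta$; so there is none. Then $V_k\to V_{k-1}$ forces, vertex by vertex, $V_{k-1}\to V_{k-2}\to\cdots\to Y$, whence $V_{k-1}\in\an_{\mathcal{G}}(Y)\cap\de_{\mathcal{G}}(\cn(A,Y,\mathcal{G}))$. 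But in a DAG the concatenation of a directed $A$--$V_{k-1}$ path (via some $C\in\cn$) with a directed $V_{k-1}$--$Y$ path has no repeated vertices, so $V_{k-1}\in\cn(A,Y,\mathcal{G})$. Thus the exit always happens at a causal node, $V_k$ is either $A$ or an element of $\pa_{\mathcal{G}}(\cn(A,Y,\mathcal{G}))\setminus\forb(A,Y,\mathcal{G})\subseteq\mathbf{O}$, and in either case $V_k$ is a non-collider in the conditioning set, blocking $\delta$.
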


In Section \ref{sec:new_graph} we provide an alternative graphical
characterization of $\mathbf{O}(A,Y,\mathbf{L},\mathcal{G})$ as the set of
neighbors of $Y$ in a suitably constructed undirected graph.

\section{Graphical characterizations}

\label{sec:new_graph}

Assuming that $(\mathbf{L,N})$ is an admissible pair with respect to $A,Y$
in $\mathcal{G}$, in this section we will define an undirected graph which
will be the basis for our graphical criteria for characterizing the globally
optimal (when it exists), the optimal minimal and the optimal minimum $%
\mathbf{L}-\mathbf{N}$ dynamic adjustment sets. The construction of this new
graph relies on a result in \cite{vanAI} which, for completeness, we state
next before the definition of the aforementioned undirected graph. In what
follows, following \cite{vanAI}, we define the proper back-door graph $%
\mathcal{G}^{pbd}(A,Y)$ as the DAG formed by removing from $\mathcal{G}$ the
first edge of every causal path from $A$ to $Y$.

\begin{theorem}[Theorem 1 from \protect\cite{vanAI}]
\label{theo:van} The set $\mathbf{Z}$ is an $\mathbf{L}-\mathbf{N}$ static
adjustment set with respect to $A,Y$ in $\mathcal{G}$ if and only if (1) $%
Y\perp \!\!\!\perp _{\mathcal{G}^{pbd}(A,Y)}A\mid \mathbf{Z}$, (2) $\mathbf{Z%
}\cap \forb(A,Y,\mathcal{G})=\emptyset $, and (3) $\mathbf{L}\subset \mathbf{%
Z}\subset \mathbf{N}$.
\end{theorem}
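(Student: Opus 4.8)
The plan is to deduce Theorem~\ref{theo:van} from the complete graphical characterization of static adjustment sets of \cite{shpitser-adjustment}, the \emph{adjustment criterion}: a set $\mathbf{Z}\subset\mathbf{V}\setminus\{A,Y\}$ is a static adjustment set relative to $A,Y$ in $\mathcal{G}$ if and only if (i) $\mathbf{Z}\cap\forb(A,Y,\mathcal{G})=\emptyset$ and (ii) $\mathbf{Z}$ blocks every \emph{proper non-causal} path from $A$ to $Y$ in $\mathcal{G}$ --- a path from $A$ to $Y$ being proper when only its first vertex is $A$, and non-causal when it is not a directed path from $A$ to $Y$. A routine check identifies the forbidden set of \cite{shpitser-adjustment} with $\forb(A,Y,\mathcal{G})$ as defined in the excerpt --- the one subtlety, that ``descendants through $A$'' do not matter, is handled by noting that a directed path out of a vertex of $\cn(A,Y,\mathcal{G})$ cannot reach $A$ without creating a cycle --- so that (i) is exactly condition (2); and imposing $\mathbf{L}\subset\mathbf{Z}\subset\mathbf{N}$ (condition (3)) turns static into $\mathbf{L}-\mathbf{N}$ static adjustment sets. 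Thus everything reduces to: \emph{assuming $\mathbf{Z}\cap\forb(A,Y,\mathcal{G})=\emptyset$, condition (ii) is equivalent to $Y\ort_{\mathcal{G}^{pbd}(A,Y)}A\mid\mathbf{Z}$}, which is condition (1).

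To prove this equivalence I would first record the effect of the surgery: the first edge of a causal path from $A$ to $Y$ always has the form $A\to W$ with $W\in\cn(A,Y,\mathcal{G})$, and conversely every such edge is the first edge of one (splice $A\to W$ onto the $W$-to-$Y$ suffix of a causal path through $W$); hence $\mathcal{G}^{pbd}(A,Y)$ is $\mathcal{G}$ with exactly the edges $\{A\to W:W\in\cn(A,Y,\mathcal{G})\}$ deleted, and every deleted edge points out of $A$ into $\de_{\mathcal{G}}(\cn(A,Y,\mathcal{G}))\subset\forb(A,Y,\mathcal{G})$. The direction ``$\Leftarrow$'' is then routine (it suffices to consider simple paths for $d$-separation): a simple $A$--$Y$ path $\delta$ in $\mathcal{G}^{pbd}(A,Y)$ is a simple, hence proper, path in $\mathcal{G}$; it is non-causal, since a directed $A$-to-$Y$ path would have its second vertex in $\cn(A,Y,\mathcal{G})$ and thus its first edge deleted; and $\delta$ is still open relative to $\mathbf{Z}$ in $\mathcal{G}$, because passing to the subgraph $\mathcal{G}^{pbd}(A,Y)$ only shrinks descendant sets (so open colliders remain open) and leaves non-colliders unchanged. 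Hence any $\mathbf{Z}$ violating (1) violates (ii).

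The substantive direction is ``$\Rightarrow$'', and this is where condition (2) is used. Given a simple open (relative to $\mathbf{Z}$) proper non-causal $A$--$Y$ path $\delta$ in $\mathcal{G}$, I claim $\delta$ is already a path in $\mathcal{G}^{pbd}(A,Y)$, still open there. The only edge of $\delta$ that could have been deleted is its first edge; but if that edge were $A\to W$ with $W\in\cn(A,Y,\mathcal{G})$, then $W\in\forb(A,Y,\mathcal{G})$, so $W\notin\mathbf{Z}$, and $W$ cannot be an open collider because every descendant of $W$ lies in $\de_{\mathcal{G}}(\cn(A,Y,\mathcal{G}))\subset\forb(A,Y,\mathcal{G})$; hence $\delta$ leaves $W$ along an edge $W\to V_3$ and descends along forward edges within $\de_{\mathcal{G}}(W)$, and since $\delta$ is non-causal this forward run must end before $Y$ at a collider, all of whose descendants again lie in $\forb(A,Y,\mathcal{G})$ and which therefore cannot be open --- contradicting that $\delta$ is open. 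Likewise, a collider $C$ on $\delta$ that is opened in $\mathcal{G}$ by a descendant $D\in\mathbf{Z}$ remains opened in $\mathcal{G}^{pbd}(A,Y)$: any directed $C$-to-$D$ path using a deleted edge would run $C\to\cdots\to A\to W\to\cdots\to D$ with $W\in\cn(A,Y,\mathcal{G})$, forcing $D\in\de_{\mathcal{G}}(W)\subset\forb(A,Y,\mathcal{G})$ and contradicting $D\in\mathbf{Z}$, so some directed $C$-to-$D$ path avoids deleted edges and lies in $\mathcal{G}^{pbd}(A,Y)$. Combining the two observations, $\delta$ witnesses the failure of (1), which closes the equivalence and hence proves the theorem. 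I expect the bookkeeping in this last direction to be the main obstacle --- certifying that the forced forward descent out of a deleted first edge stays inside $\de_{\mathcal{G}}(W)\subset\forb(A,Y,\mathcal{G})$ and terminates strictly before $Y$, and repeatedly using acyclicity of $\mathcal{G}$ to exclude directed paths from vertices of $\cn(A,Y,\mathcal{G})$ back to $A$ --- and I would isolate both observations as a single structural lemma about $\mathcal{G}^{pbd}(A,Y)$; this is in effect the path-surgery lemma underlying the proper-back-door-graph construction of \cite{vanAI}.
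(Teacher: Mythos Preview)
The paper does not prove Theorem~\ref{theo:van}; it is quoted verbatim as ``Theorem~1 from \cite{vanAI}'' and used as a black box throughout (e.g.\ in the proofs of Proposition~\ref{prop:equiv_dtr}, Lemma~\ref{lemma:equiv}, and Proposition~\ref{prop:order_implies_indep}). So there is no in-paper proof to compare against.

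Your proposal is a correct self-contained derivation, and it follows essentially the same route as the original argument in \cite{vanAI}: start from the adjustment criterion of \cite{shpitser-adjustment}, identify their forbidden set with $\forb(A,Y,\mathcal{G})$, and then show that, under $\mathbf{Z}\cap\forb(A,Y,\mathcal{G})=\emptyset$, blocking every proper non-causal $A$--$Y$ path in $\mathcal{G}$ is equivalent to $Y\ort_{\mathcal{G}^{pbd}(A,Y)}A\mid\mathbf{Z}$. Both implications you sketch are sound. Two small tightenings: in the ``$\Rightarrow$'' direction, your collider argument actually shows that \emph{every} directed $C$-to-$D$ path in $\mathcal{G}$ avoids deleted edges (not merely that some path does), since any such path traversing a deleted edge $A\to W$ forces $D\in\de_{\mathcal{G}}(W)\subset\forb(A,Y,\mathcal{G})$; and your appeal to ``it suffices to consider simple paths for $d$-separation'' is correct but worth stating as a lemma if you want the write-up to be fully self-contained, since the paper's path definition does not explicitly require simplicity.
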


\begin{definition}
\label{def:H} Let 
\begin{equation*}
\mathcal{H}^{0}(A,Y,\mathbf{L},\mathcal{G})\equiv \left\{ \mathcal{G}_{\an_{%
\mathcal{G}}(\{A,Y\}\cup \mathbf{L})}^{pbd}(A,Y)\right\} ^{m}
\end{equation*}%
and 
\begin{equation*}
\ignore(A,Y,\mathbf{L},\mathbf{N},\mathcal{G})\equiv \left\{ \an_{\mathcal{G}%
}(\{A,Y\}\cup \mathbf{L})\setminus \{A,Y\}\right\} \cap \left\{ \mathbf{N}%
^{c}\cup \forb(A,Y,\mathcal{G})\right\} .
\end{equation*}%
The non-parametric adjustment efficiency graph associated with $A,Y,\mathbf{L%
},\mathbf{N}$ in $\mathcal{G}$ is defined as the undirected graph, denoted
with $\mathcal{H}^{1}(A,Y,\mathbf{L},\mathbf{N},\mathcal{G}),$ constructed
from $\mathcal{H}^{0}(A,Y,\mathbf{L},\mathcal{G})$ by (1) removing all
vertices in $\ignore(A,Y,\mathbf{L},\mathbf{N},\mathcal{G})$, (2) adding an
edge between any pair of remaining vertices if they were connected in $%
\mathcal{H}^{0}(A,Y,\mathbf{L},\mathcal{G})$ by a path with vertices in $%
\ignore(A,Y,\mathbf{L},\mathbf{N},\mathcal{G})$ and (3) adding an edge: (i)
between $A$ and each vertex in $\mathbf{L}$ and, (ii) between $Y$ and each
vertex in $\mathbf{L}$.
\end{definition}

For conciseness, unless unclear, throughout we will drop $\mathbf{L}$ and $%
\mathbf{N}$ from $\ignore(A,Y,\mathbf{L},\mathbf{N},\mathcal{G})$ and we
will also write $\mathcal{H}^{0}$ and $\mathcal{H}^{1}$ instead of $\mathcal{%
H}^{0}(A,Y,\mathbf{L},\mathcal{G})$ and $\mathcal{H}^{1}(A,Y,\mathbf{L},%
\mathbf{N},\mathcal{G})$.

\cite{textor12} used the undirected graph $\mathcal{H}^{0}$ as the basis for
a graphical characterization of minimal static adjustment sets when $\mathbf{%
N}=\mathbf{V.}$ We will show later that our construction of $\mathcal{H}^{1}$
entails, among other characterizations, a graphical criterion that extends
the one in \cite{textor12} to minimal $\mathbf{L}-\mathbf{N}$ dynamic
adjustment sets and sets $\mathbf{N}$ that can be a strict subset of $%
\mathbf{V.}$ In addition, it entails the graphical characterization of a
globally optimal $\mathbf{L}-\mathbf{N}$ dynamic adjustment set for $\left( 
\mathbf{L},\mathbf{N}\right) $ admissible pairs when $\mathbf{N}\subset \an_{%
\mathcal{G}}(\{A,Y\}\cup \mathbf{L}).$ The heuristics behind the
construction of $\mathcal{H}^{1}$ are as follows. If $\mathbf{N}\subset \an_{%
\mathcal{G}}(\{A,Y\}\cup \mathbf{L})$ then any $\mathbf{L}-\mathbf{N}$
dynamic adjustment set must be a subset of $\an_{\mathcal{G}}(\{A,Y\}\cup 
\mathbf{L}).$ On the other hand, even if $\mathbf{N}\not \subset \an_{%
\mathcal{G}}(\{A,Y\}\cup \mathbf{L})$, Proposition \ref{prop:equiv_dtr}
established that all minimal, and consequently all minimum, $\mathbf{L}-%
\mathbf{N}$ dynamic adjustment sets are subsets of $\an_{\mathcal{G}%
}(\{A,Y\}\cup \mathbf{L})$. Now, suppose that $\mathbf{C}$ satisfies $%
\mathbf{L}\subset \mathbf{C}\subset \mathbf{N}$, $\mathbf{C}\cap \forb(A,Y,%
\mathcal{G})=\emptyset $ and $\mathbf{C}$ is an $A-Y$ cut in the moralized
graph $\mathcal{H}^{0}$ of the proper back-door graph $\mathcal{G}_{\an_{%
\mathcal{G}}(\{A,Y\}\cup \mathbf{L})}^{pbd}(A,Y).$ By Theorem \ref{theo:van}%
, the moralization property \eqref{eq:moral_equiv} and Proposition \ref%
{prop:equiv_dtr}, the cut $\mathbf{C}$ is an $\mathbf{L}-\mathbf{N}$ dynamic
adjustment set and is a subset of $\an_{\mathcal{G}}(\{A,Y\}\cup \mathbf{L})$%
. Next note that variables in $\ignore(A,Y,\mathcal{G})$ are either hidden
or forbidden and hence cannot be part of any $\mathbf{L}-\mathbf{N}$ dynamic
adjustment set. Steps 1 and 2 of Definition \ref{def:H} are similar in
spirit to a latent projection \citep{verma-pearl, admg} on $\mathbf{V}(%
\mathcal{H}^{0})\setminus \ignore(A,Y,\mathcal{G})$. The so called latent
projection operation in DAGs marginalizes DAGs over hidden variables while
preserving d-separation relations between the observable variables. Lemma %
\ref{lemma:indep_equiv} below establishes that $\mathcal{H}^{1}$ preserves
the separations in $\mathcal{H}^{0}$ between variables that lie in $\mathbf{V%
}(\mathcal{H}^{1})$, the vertex set of $\mathcal{H}^{1}$, when the set of
variables that are conditioned on contains $\mathbf{L}$.

\begin{lemma}
\label{lemma:indep_equiv} Let $U,V\in \mathbf{V}(\mathcal{H}^{1})$ and $%
\mathbf{L}\subset \mathbf{W}\subset \mathbf{V}(\mathcal{H}^{1})$. Then $%
U\perp _{\mathcal{H}^{0}}V\mid \mathbf{W}$ if and only if $U\perp _{\mathcal{%
H}^{1}}V\mid \mathbf{W}. $
\end{lemma}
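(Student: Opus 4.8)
The plan is to reduce everything to the fact that in an undirected graph ``$U\perp_{\mathcal H}V\mid\mathbf W$'' means precisely that every path (walk) from $U$ to $V$ meets $\mathbf W$, together with the observation that $\mathbf V(\mathcal H^{1})=\mathbf V(\mathcal H^{0})\setminus\mathbf I$, where $\mathbf I\equiv\ignore(A,Y,\mathbf L,\mathbf N,\mathcal G)\subset\mathbf V(\mathcal H^{0})$. Thus only the edge modifications in steps (2) and (3) of Definition~\ref{def:H} have to be tracked. First I would clear the trivial cases: if $U\in\mathbf W$ or $V\in\mathbf W$ then every path from $U$ to $V$ meets $\mathbf W$ at an endpoint, so both separation statements hold and the equivalence is immediate; hence we may assume $U,V\notin\mathbf W$. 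Also note $\mathbf I\cap\mathbf W=\emptyset$, since $\mathbf W\subset\mathbf V(\mathcal H^{1})$, and $U,V\notin\mathbf I$.

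For the implication ``$U\not\perp_{\mathcal H^{1}}V\mid\mathbf W\ \Rightarrow\ U\not\perp_{\mathcal H^{0}}V\mid\mathbf W$'', take a path $\delta$ in $\mathcal H^{1}$ from $U$ to $V$ none of whose vertices lies in $\mathbf W$, and classify each of its edges as: (a) inherited from $\mathcal H^{0}$; (b) added in step (2), in which case its two endpoints are joined in $\mathcal H^{0}$ by a path all of whose internal vertices lie in $\mathbf I$; or (c) one of the step-(3) edges $A-L$ or $Y-L$ with $L\in\mathbf L$. Case (c) cannot occur: such an edge would place the vertex $L$ on $\delta$, but $L\in\mathbf L\subset\mathbf W$ while $\delta$ avoids $\mathbf W$ and $L\notin\{U,V\}$. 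Replacing every type-(b) edge of $\delta$ by the corresponding $\mathcal H^{0}$-detour through $\mathbf I$ yields a walk in $\mathcal H^{0}$ from $U$ to $V$ whose vertices are those of $\delta$ (none in $\mathbf W$) together with some vertices of $\mathbf I$ (none in $\mathbf W$); hence $U\not\perp_{\mathcal H^{0}}V\mid\mathbf W$.

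For the converse, take a path $\gamma$ in $\mathcal H^{0}$ from $U$ to $V$ avoiding $\mathbf W$ and let $U=W_{0},W_{1},\dots,W_{k}=V$ be the subsequence of its vertices that lie in $\mathbf V(\mathcal H^{1})=\mathbf V(\mathcal H^{0})\setminus\mathbf I$ (well defined because $U,V\notin\mathbf I$). Between consecutive $W_{j},W_{j+1}$ all intermediate vertices of $\gamma$ lie in $\mathbf I$, so either $W_{j}-W_{j+1}$ is already an edge of $\mathcal H^{0}$, hence of $\mathcal H^{1}$ (induced-subgraph deletion keeps edges among surviving vertices, and steps (2)--(3) only add edges), or step (2) of Definition~\ref{def:H} inserts the edge $W_{j}-W_{j+1}$ into $\mathcal H^{1}$. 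Therefore $(W_{0},\dots,W_{k})$ is a walk in $\mathcal H^{1}$ from $U$ to $V$ none of whose vertices lies in $\mathbf W$, so $U\not\perp_{\mathcal H^{1}}V\mid\mathbf W$.

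The only genuinely non-routine point is the elimination of the step-(3) edges, which is exactly where the hypothesis $\mathbf L\subset\mathbf W$ is used; without it the lemma is false. Everything else is bookkeeping about walks, and since the paper's notion of ``path'' already permits repeated vertices, no reduction from walks to simple paths is needed (if one preferred simple paths, the usual shortest-walk argument would supply one avoiding $\mathbf W$).
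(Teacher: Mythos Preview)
Your proof is correct and follows essentially the same approach as the paper's: both arguments track the three types of edges in $\mathcal H^{1}$ (inherited, step-(2) shortcuts through $\ignore$, step-(3) edges to $\mathbf L$), replace step-(2) edges by their $\mathcal H^{0}$-detours in one direction and collapse $\ignore$-segments in the other, and use $\mathbf L\subset\mathbf W$ to dispose of the step-(3) edges. The only cosmetic differences are that you frame both directions as contrapositives and explicitly dispatch the endpoint-in-$\mathbf W$ case up front, whereas the paper argues directly that every path must intersect $\mathbf W$; your remark that the paper's ``paths'' are walks (so no simple-path reduction is needed) is also accurate.
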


Next we note that steps 1 and 2 of Definition \ref{def:H} also ensure that $%
A-Y$ cuts in $\mathcal{H}^{1}$ intersect neither $\forb(A,Y,\mathcal{G})$
nor $\mathbf{N}^{c}$, while step 3 ensures that all $A-Y$ cuts in $\mathcal{H%
}^{1}$ are supersets of $\mathbf{L}$. This, together with the preceding
discussion, suggests that $A-Y$ cuts in $\mathcal{H}^{1}$ should coincide
with $\mathbf{L}-\mathbf{N}$ dynamic adjustment sets in $\mathcal{G}$.
Proposition \ref{prop:equiv_dyn} below establishes that this is indeed true
when $\mathbf{N}\subset \an_{\mathcal{G}}(\{A,Y\}\cup \mathbf{L})$.

\begin{proposition}
\label{prop:equiv_dyn} $\:$

\begin{enumerate}
\item If $(\mathbf{L,N})$ is an admissible pair with respect to $A,Y$ in $%
\mathcal{G}$ $\ $then $A$ and $Y$ are not adjacent in $\mathcal{H}^{1}$.

\item If $\mathbf{Z}$ is an $A-Y$ cut in $\mathcal{H}^{1}$ then $\mathbf{Z}$
is an $\mathbf{L}-\mathbf{N}$ dynamic adjustment set with respect to $A,Y$
in $\mathcal{G}$.

\item If $\mathbf{Z}\subset \an_{\mathcal{G}}(\{A,Y\}\cup \mathbf{L})$ then $%
\mathbf{Z}$ is an $\mathbf{L}-\mathbf{N}$ dynamic adjustment set with
respect to $A,Y$ in $\mathcal{G}$ if and only if $\mathbf{Z}$ is an $A-Y$
cut in $\mathcal{H}^{1}$.

\item $\mathbf{Z}$ is a minimal $\mathbf{L}-\mathbf{N}$ dynamic adjustment
set with respect to $A,Y$ in $\mathcal{G}$ in $\mathcal{G}$ if and only if $%
\mathbf{Z}$ is a minimal $A-Y$ cut in $\mathcal{H}^{1}$.

\item $\mathbf{Z}$ is a minimum $\mathbf{L}-\mathbf{N}$ dynamic adjustment
set with respect to $A,Y$ in $\mathcal{G}$ if and only if $\mathbf{Z}$ is a
minimum $A-Y$ cut in $\mathcal{H}^{1}$.
\end{enumerate}
\end{proposition}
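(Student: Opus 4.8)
The plan is to prove part 3 first, as it is the substantive statement; part 2 is then immediate, part 1 is handled separately, and parts 4 and 5 follow from part 3 together with Proposition \ref{prop:equiv_dtr}. I would open with two book-keeping facts. First, from Definition \ref{def:H}, $\mathbf{V}(\mathcal{H}^{1})=\an_{\mathcal{G}}(\{A,Y\}\cup\mathbf{L})\setminus\ignore(A,Y,\mathcal{G})$, and since $\ignore(A,Y,\mathcal{G})=[\an_{\mathcal{G}}(\{A,Y\}\cup\mathbf{L})\setminus\{A,Y\}]\cap(\mathbf{N}^{c}\cup\forb(A,Y,\mathcal{G}))$ one computes $\mathbf{V}(\mathcal{H}^{1})\setminus\{A,Y\}=[\an_{\mathcal{G}}(\{A,Y\}\cup\mathbf{L})\cap\mathbf{N}]\setminus\forb(A,Y,\mathcal{G})$ (using $A,Y\in\forb(A,Y,\mathcal{G})$); write $\mathbf{Z}^{\ast}$ for this set, which by the admissibility criterion of \cite{vanAI} recalled in Section \ref{sec:adj} is an $\mathbf{L}-\mathbf{N}$ static adjustment set exactly when $(\mathbf{L},\mathbf{N})$ is admissible. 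Hence for any $\mathbf{Z}\subseteq\an_{\mathcal{G}}(\{A,Y\}\cup\mathbf{L})$ disjoint from $\{A,Y\}$, the conjunction ``$\mathbf{Z}\subseteq\mathbf{N}$ and $\mathbf{Z}\cap\forb(A,Y,\mathcal{G})=\emptyset$'' is equivalent to ``$\mathbf{Z}\subseteq\mathbf{V}(\mathcal{H}^{1})$''; in particular $\mathbf{L}\subseteq\mathbf{V}(\mathcal{H}^{1})$, since $\mathbf{L}\subseteq\mathbf{N}$ and $\mathbf{L}\cap\forb(A,Y,\mathcal{G})=\emptyset$ (because $\forb(A,Y,\mathcal{G})\subseteq\de_{\mathcal{G}}(A)$ while $\mathbf{L}\cap\de_{\mathcal{G}}(A)=\emptyset$). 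Second, I would record the ancestor identity $\an_{\mathcal{G}^{pbd}(A,Y)}(\{A,Y\}\cup\mathbf{Z})=\an_{\mathcal{G}}(\{A,Y\}\cup\mathbf{L})$ whenever $\mathbf{L}\subseteq\mathbf{Z}\subseteq\an_{\mathcal{G}}(\{A,Y\}\cup\mathbf{L})$: the proper back-door graph deletes only edges out of $A$, so it leaves $\an_{\mathcal{G}}(A)$ unchanged and preserves every causal path not visiting $A$, and the inclusion assumption $\mathbf{L}\subseteq\nd_{\mathcal{G}}(A)$ rules out causal paths into $\mathbf{L}$ through $A$.

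For part 3, observe that both a dynamic adjustment set and an $A-Y$ cut must be disjoint from $\{A,Y\}$, and that for each $L\in\mathbf{L}$ the path $(A,L,Y)$ lies in $\mathcal{H}^{1}$ by step (3) of Definition \ref{def:H}; so if $\mathbf{L}\not\subseteq\mathbf{Z}$ or $\mathbf{Z}\cap\{A,Y\}\neq\emptyset$ then neither side of the claimed equivalence holds, and I may assume $\mathbf{L}\subseteq\mathbf{Z}$ and $\mathbf{Z}\cap\{A,Y\}=\emptyset$. By Proposition \ref{prop:equiv_dtr}, $\mathbf{Z}$ is an $\mathbf{L}-\mathbf{N}$ dynamic adjustment set iff it is an $\mathbf{L}-\mathbf{N}$ static adjustment set, iff (Theorem \ref{theo:van}) $Y\perp \!\!\!\perp_{\mathcal{G}^{pbd}(A,Y)}A\mid\mathbf{Z}$, $\mathbf{Z}\cap\forb(A,Y,\mathcal{G})=\emptyset$ and $\mathbf{Z}\subseteq\mathbf{N}$. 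By the first book-keeping fact the last two conditions say $\mathbf{Z}\subseteq\mathbf{V}(\mathcal{H}^{1})$; by the ancestor identity and the moralization equivalence \eqref{eq:moral_equiv} applied to the DAG $\mathcal{G}^{pbd}(A,Y)$, the first condition is equivalent to $A\perp _{\mathcal{H}^{0}}Y\mid\mathbf{Z}$. Finally, Lemma \ref{lemma:indep_equiv} --- applicable since $\mathbf{L}\subseteq\mathbf{Z}\subseteq\mathbf{V}(\mathcal{H}^{1})$ and $A,Y\in\mathbf{V}(\mathcal{H}^{1})$ --- turns ``$\mathbf{Z}\subseteq\mathbf{V}(\mathcal{H}^{1})$ and $A\perp _{\mathcal{H}^{0}}Y\mid\mathbf{Z}$'' into ``$\mathbf{Z}$ is an $A-Y$ cut in $\mathcal{H}^{1}$'', proving part 3. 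Part 2 is immediate because an $A-Y$ cut in $\mathcal{H}^{1}$ is contained in $\mathbf{V}(\mathcal{H}^{1})\subseteq\an_{\mathcal{G}}(\{A,Y\}\cup\mathbf{L})$, so part 3 applies to it.

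For part 1, suppose $(\mathbf{L},\mathbf{N})$ is admissible, so $\mathbf{Z}^{\ast}=\mathbf{V}(\mathcal{H}^{1})\setminus\{A,Y\}$ is an $\mathbf{L}-\mathbf{N}$ static adjustment set and hence, by Theorem \ref{theo:van}, the ancestor identity, and \eqref{eq:moral_equiv}, $A\perp _{\mathcal{H}^{0}}Y\mid\mathbf{Z}^{\ast}$. If $A$ and $Y$ were adjacent in $\mathcal{H}^{1}$, then by Definition \ref{def:H} either (a) $A-Y$ is already an edge of $\mathcal{H}^{0}$, or (b) $\mathcal{H}^{0}$ contains a path from $A$ to $Y$ whose internal vertices all lie in $\ignore(A,Y,\mathcal{G})$. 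Case (b) is impossible: since $\ignore(A,Y,\mathcal{G})\cap\mathbf{Z}^{\ast}=\emptyset$, such a path is not blocked by $\mathbf{Z}^{\ast}$, contradicting $A\perp _{\mathcal{H}^{0}}Y\mid\mathbf{Z}^{\ast}$. Case (a) is impossible by inspecting $\left\{\mathcal{G}_{\an_{\mathcal{G}}(\{A,Y\}\cup\mathbf{L})}^{pbd}(A,Y)\right\}^{m}$: an $A-Y$ edge there would require $A\to Y$ in $\mathcal{G}^{pbd}(A,Y)$ (impossible, as the edge $A\to Y$ is the first edge of a causal path and is deleted), $Y\to A$ in $\mathcal{G}$ (impossible, as $A\in\an_{\mathcal{G}}(Y)$ would create a directed cycle), or a common child $W\in\an_{\mathcal{G}}(\{A,Y\}\cup\mathbf{L})$ with $A\to W$ and $Y\to W$ in $\mathcal{G}$ (impossible: $W$ is then a proper $\mathcal{G}$-descendant of both $A$ and $Y$, hence an ancestor of neither by acyclicity, and $W\in\de_{\mathcal{G}}(A)$ together with $\mathbf{L}\cap\de_{\mathcal{G}}(A)=\emptyset$ forbids $W\in\an_{\mathcal{G}}(\mathbf{L})$).

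For parts 4 and 5, I would work with the family $\mathcal{F}$ of all $\mathbf{Z}\subseteq\an_{\mathcal{G}}(\{A,Y\}\cup\mathbf{L})$ disjoint from $\{A,Y\}$, which is closed under taking subsets. Every $A-Y$ cut of $\mathcal{H}^{1}$ lies in $\mathcal{F}$ since $\mathbf{V}(\mathcal{H}^{1})\subseteq\an_{\mathcal{G}}(\{A,Y\}\cup\mathbf{L})$, and every minimal --- hence every minimum --- $\mathbf{L}-\mathbf{N}$ dynamic adjustment set lies in $\mathcal{F}$ by part 2 of Proposition \ref{prop:equiv_dtr}. Within $\mathcal{F}$, part 3 identifies $\mathbf{L}-\mathbf{N}$ dynamic adjustment sets with $A-Y$ cuts of $\mathcal{H}^{1}$, and since $\mathcal{F}$ is subset-closed this identification respects proper inclusion, which gives part 4; part 5 is the same argument with cardinalities in place of inclusions, using in addition that every $\mathbf{L}-\mathbf{N}$ dynamic adjustment set contains a minimal one of no larger cardinality, which lies in $\mathcal{F}$ and is therefore an $A-Y$ cut of $\mathcal{H}^{1}$ of at most that cardinality. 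I expect the one genuinely non-routine step to be the ingredient used in parts 1 and 3: establishing the ancestor identity, and thereby that d-separation of $A$ and $Y$ given $\mathbf{Z}$ in $\mathcal{G}^{pbd}(A,Y)$ is faithfully read off from $\mathcal{H}^{0}$ --- the moralization of the proper back-door graph restricted to $\an_{\mathcal{G}}(\{A,Y\}\cup\mathbf{L})$, a vertex set that need not equal $\an_{\mathcal{G}^{pbd}(A,Y)}(\{A,Y\}\cup\mathbf{Z})$ --- so that \eqref{eq:moral_equiv} can be invoked as stated; everything else is book-keeping with $\ignore(A,Y,\mathcal{G})$, $\forb(A,Y,\mathcal{G})$ and $\mathbf{V}(\mathcal{H}^{1})$, combined with Proposition \ref{prop:equiv_dtr} and Lemma \ref{lemma:indep_equiv}.
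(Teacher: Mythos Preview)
Your proposal is correct and follows essentially the same route as the paper: reduce dynamic to static via Proposition~\ref{prop:equiv_dtr}, characterize static adjustment sets via Theorem~\ref{theo:van}, translate the d-separation condition in $\mathcal{G}^{pbd}(A,Y)$ to a separation in $\mathcal{H}^{0}$ via the moralization equivalence~\eqref{eq:moral_equiv}, and then pass between $\mathcal{H}^{0}$ and $\mathcal{H}^{1}$ via Lemma~\ref{lemma:indep_equiv}. The paper packages parts 2--5 through an auxiliary lemma (Lemma~\ref{lemma:equiv}) stated for static adjustment sets, but the substance is identical; your use of the full ancestor identity $\an_{\mathcal{G}^{pbd}(A,Y)}(\{A,Y\}\cup\mathbf{Z})=\an_{\mathcal{G}}(\{A,Y\}\cup\mathbf{L})$ to make both directions of part~3 symmetric is arguably cleaner than the paper's asymmetric treatment, which uses only the inclusion $\an_{\mathcal{G}}(\{A,Y\}\cup\mathbf{Z})\subset\an_{\mathcal{G}}(\{A,Y\}\cup\mathbf{L})$ and a subgraph argument for the direction ``cut $\Rightarrow$ adjustment set''.

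One remark on part~1: your direct case analysis is correct (you should also note that step~(3) of Definition~\ref{def:H} cannot create an $A$--$Y$ edge since $A,Y\notin\mathbf{L}$, but this is immediate), yet it is unnecessary. Having already established part~3, you can simply observe---as the paper does---that admissibility makes $\mathbf{Z}^{\ast}=\mathbf{V}(\mathcal{H}^{1})\setminus\{A,Y\}$ an $\mathbf{L}$--$\mathbf{N}$ adjustment set contained in $\an_{\mathcal{G}}(\{A,Y\}\cup\mathbf{L})$, so by part~3 it is an $A$--$Y$ cut in $\mathcal{H}^{1}$, whence $A$ and $Y$ are not adjacent.
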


Next, we provide several examples illustrating the construction of $\mathcal{%
H}^{1}$. It is easy to check that in each of our examples $(\mathbf{L},%
\mathbf{N})$ is an admissible pair with respect to $A,Y$ in $\mathcal{G}$.

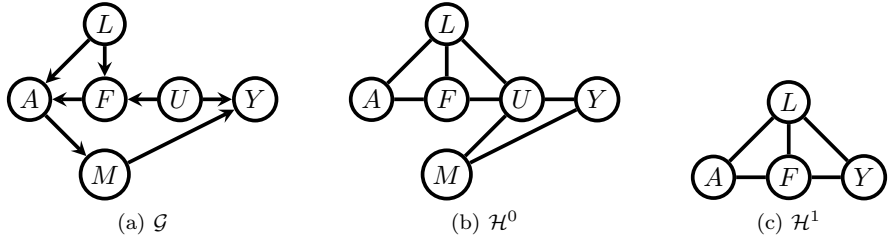
\begin{figure}[ht!]
\centering
\subfloat[$\mathcal{G}$]{
\begin{tikzpicture}[>=stealth, node distance=1cm,
pre/.style={->,>=stealth,ultra thick,line width = 1.4pt}]
  \begin{scope}
    \tikzstyle{format} = [circle, inner sep=2pt,draw, thick, circle, line width=1.4pt, minimum size=3mm]
    \node[format] (A) {$A$};
        \node[format, right of=A] (F) {$F$};
       \node[format, right of=F] (U) {$U$};
              \node[format, below of=F] (M) {$M$};
            \node[format, right of=U] (Y) {$Y$};
    \node[format, above of=F] (L) {$L$};

                 \draw (F) edge[pre, black] (A);
                 \draw (L) edge[pre, black] (A);
                 \draw (L) edge[pre, black] (F);
                 \draw (U) edge[pre, black] (F);                                   				 \draw (U) edge[pre, black] (Y);                                  
				\draw (A) edge[pre, black] (M);                                  
				\draw (M) edge[pre, black] (Y);

  \end{scope} 
  \end{tikzpicture}
  } \qquad 
\subfloat[$\mathcal{H}^{0}$]{
  \begin{tikzpicture}[>=stealth, node distance=1cm,
pre/.style={>=stealth,ultra thick,line width = 1.4pt}]
  \begin{scope}
    \tikzstyle{format} = [circle, inner sep=2pt,draw, thick, circle, line width=1.4pt, minimum size=3mm]
  \node[format] (A) {$A$};
        \node[format, right of=A] (F) {$F$};
       \node[format, right of=F] (U) {$U$};
              \node[format, below of=F] (M) {$M$};
            \node[format, right of=U] (Y) {$Y$};
    \node[format, above of=F] (L) {$L$};

                 \draw (F) edge[pre, black] (A);
                 \draw (L) edge[pre, black] (A);
                 \draw (L) edge[pre, black] (F);
                  \draw (L) edge[pre, black] (U);
                 \draw (U) edge[pre, black] (F);                                   				 \draw (U) edge[pre, black] (Y);                                  
				\draw (M) edge[pre, black] (Y);                                  
         		\draw (M) edge[pre, black] (U);

  \end{scope} 
  \end{tikzpicture}
  } \qquad 
\subfloat[$\mathcal{H}^{1}$]{
\begin{tikzpicture}[>=stealth, node distance=1cm,
pre/.style={>=stealth,ultra thick,line width = 1.4pt}]
  \begin{scope}
    \tikzstyle{format} = [circle, inner sep=2pt,draw, thick, circle, line width=1.4pt, minimum size=3mm]
  \node[format] (A) {$A$};
        \node[format, right of=A] (F) {$F$};
            \node[format, right of=F] (Y) {$Y$};
    \node[format, above of=F] (L) {$L$};

                 \draw (F) edge[pre, black] (A);
                 \draw (L) edge[pre, black] (A);
                 \draw (L) edge[pre, black] (F);
                 \draw (L) edge[pre, black] (Y);								
                 \draw (F) edge[pre, black] (Y);

  \end{scope} 
  \end{tikzpicture}
  }
\caption{Example of the construction of the non-parametric adjustment
efficiency graph, where $\mathbf{L}=\lbrace L \rbrace$ and $\mathbf{N}%
=\lbrace A,M, L,F, Y \rbrace$. Here $\an_{\mathcal{G}}(\lbrace A,Y\rbrace
\cup \mathbf{L})=\mathbf{V}(\mathcal{G})$, $\forb(A,Y,\mathcal{G})=\lbrace
A,Y,M\rbrace$ and $\ignore(A,Y,\mathcal{G})=\lbrace M, U\rbrace$. }
\label{fig:all_equal}
\end{figure}

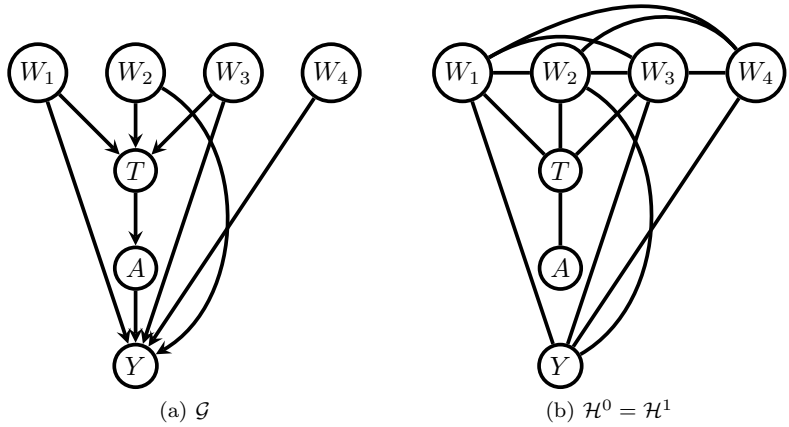
\begin{figure}[ht!]
\centering
\subfloat[$\mathcal{G}$]{
\begin{tikzpicture}[>=stealth, node distance=1.3cm,
pre/.style={->,>=stealth,ultra thick,line width = 1.4pt}]
  \begin{scope}
    \tikzstyle{format} = [circle, inner sep=2pt,draw, thick, circle, line width=1.4pt, minimum size=3mm]

    \node[format] (1) {$W_{1}$};
        \node[format,right of=1] (2) {$W_{2}$};
        \node[format,right of=2] (N) {$W_{3}$};
	\node[format,right of=N] (N1) {$W_{4}$};
                \node[format,below of=2] (L) {$T$};
            \node[format,below of=L] (A) {$A$};
             \node[format,below of=A] (Y) {$Y$};

                 \draw (1) edge[pre, black] (L);
          \draw (1) edge[pre, black] (Y);
                  \draw (2) edge[pre, black] (L);
          \draw (2) edge[pre, black, out=330, in=30] (Y);
          \draw (N) edge[pre, black] (L);
          \draw (N) edge[pre, black] (Y);
           \draw (A) edge[pre, black] (Y);
          \draw (L) edge[pre, black] (A);
                    \draw (N1) edge[pre, black] (Y);

  \end{scope} 
  \end{tikzpicture}
  } \qquad 
\subfloat[$\mathcal{H}^{0}=\mathcal{H}^{1}$]{
\begin{tikzpicture}[>=stealth, node distance=1.3cm,
pre/.style={>=stealth,ultra thick,line width = 1.4pt}]
  \begin{scope}
    \tikzstyle{format} = [circle, inner sep=2pt,draw, thick, circle, line width=1.4pt, minimum size=3mm]

    \node[format] (1) {$W_{1}$};
        \node[format,right of=1] (2) {$W_{2}$};
        \node[format,right of=2] (N) {$W_{3}$};
	\node[format,right of=N] (N1) {$W_{4}$};
                \node[format,below of=2] (L) {$T$};
            \node[format,below of=L] (A) {$A$};
             \node[format,below of=A] (Y) {$Y$};

                 \draw (1) edge[pre, black] (L);
          \draw (1) edge[pre, black] (Y);
                  \draw (2) edge[pre, black] (L);
          \draw (2) edge[pre, black, out=330, in=30] (Y);
          \draw (N) edge[pre, black] (L);
          \draw (N) edge[pre, black] (Y);
           \draw (1) edge[pre, black] (2);
            \draw (2) edge[pre, black] (N);
           \draw (N) edge[pre, black] (N1);
           
          \draw (1) edge[pre, black, out=30, in=150] (N);
             \draw (1) edge[pre, black, out=30, in=130] (N1);
          \draw (2) edge[pre, black, out=45, in=130] (N1);
          \draw (L) edge[pre, black] (A);
                    \draw (N1) edge[pre, black] (Y);

  \end{scope} 
  \end{tikzpicture}
  } 
\caption{Example of the construction of the non-parametric adjustment
efficiency graph, where $\mathbf{L}=\emptyset$ and $\mathbf{N}=%
\mathbf{V}(\mathcal{G})$. Here $\an_{\mathcal{G}}(\lbrace A,Y\rbrace \cup 
\mathbf{L})=\mathbf{V}(\mathcal{G})$, $\forb(A,Y,\mathcal{G})=\lbrace
A,Y\rbrace$ and $\ignore(A,Y,\mathcal{G})=\emptyset$. }
\label{fig:small_size}
\end{figure}

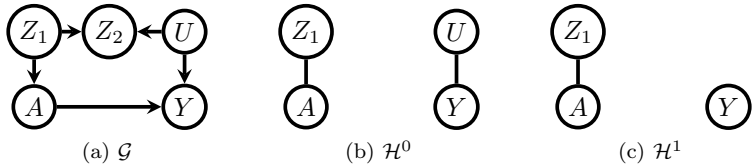
\begin{figure}[ht!]
\centering
\subfloat[$\mathcal{G}$]{
\begin{tikzpicture}[>=stealth, node distance=1cm,
pre/.style={->,>=stealth,ultra thick,line width = 1.4pt}]
  \begin{scope}
    \tikzstyle{format} = [circle, inner sep=2pt,draw, thick, circle, line width=1.4pt, minimum size=3mm]
    \node[format] (A) {$A$};
        \node[right of=A] (O) {};
        \node[format, right of=O] (Y) {$Y$};
               \node[format, above of=A] (Z1) {$Z_1$};
             \node[format ,above of=Y] (U) {$U$};
            \node[format, right of=Z1] (Z2) {$Z_2$};
                 \draw (Z1) edge[pre, black] (A);
                \draw (Z1) edge[pre, black] (Z2);
                        \draw (U) edge[pre, black] (Y);
               \draw (U) edge[pre, black] (Z2);
                              \draw (A) edge[pre, black] (Y);

  \end{scope} 
  \end{tikzpicture}
  } \qquad 
\subfloat[$\mathcal{H}^{0}$]{
  \begin{tikzpicture}[>=stealth, node distance=1cm,
pre/.style={>=stealth,ultra thick,line width = 1.4pt}]
  \begin{scope}
    \tikzstyle{format} = [circle, inner sep=2pt,draw, thick, circle, line width=1.4pt, minimum size=3mm]
    \node[format] (A) {$A$};
        \node[right of=A] (O) {};
        \node[format, right of=O] (Y) {$Y$};
               \node[format, above of=A] (Z1) {$Z_1$};
             \node[format ,above of=Y] (U) {$U$};

                 \draw (Z1) edge[pre, black] (A);

                        \draw (U) edge[pre, black] (Y);

  \end{scope} 
  \end{tikzpicture}
  } \qquad 
\subfloat[$\mathcal{H}^{1}$]{
\begin{tikzpicture}[>=stealth, node distance=1cm,
pre/.style={>=stealth,ultra thick,line width = 1.4pt}]
  \begin{scope}
 \tikzstyle{format} = [circle, inner sep=2pt,draw, thick, circle, line width=1.4pt, minimum size=3mm]
    \node[format] (A) {$A$};
        \node[right of=A] (O) {};
        \node[format, right of=O] (Y) {$Y$};
               \node[format, above of=A] (Z1) {$Z_1$};

                 \draw (Z1) edge[pre, black] (A);

  \end{scope} 
  \end{tikzpicture}
  }
\caption{Example of the construction of the non-parametric adjustment
efficiency graph, where $\mathbf{L}=\emptyset$ and $\mathbf{N}=\lbrace A,Y,
Z_{1},Z_{2}\rbrace$. Here $\an_{\mathcal{G}}(\lbrace A,Y\rbrace \cup \mathbf{%
L})=\lbrace A,Y, Z_{1}, U \rbrace$, $\forb(A,Y,\mathcal{G})=\lbrace
A,Y\rbrace$ and $\ignore(A,Y,\mathcal{G})=\lbrace U\rbrace$. }
\label{fig:no_opt}
\end{figure}

\begin{figure}[ht!]
\centering
\subfloat[$\mathcal{G}$]{
\begin{tikzpicture}[>=stealth, node distance=1cm,
pre/.style={->,>=stealth,ultra thick,line width = 1.4pt}]
  \begin{scope}
    \tikzstyle{format} = [circle, inner sep=2pt,draw, thick, circle, line width=1.4pt, minimum size=3mm]
    \node[format] (A) {$A$};
        \node[format, right of=A] (Y) {$Y$};
        \node[format, right of=Y] (U) {$U$};
             \node[format, below of=A] (L) {$L$};
               \node[right of=L] (O) {};
                 \node[format, right of=O] (F) {$F$};
                 \draw (L) edge[pre, black] (A);
               \draw (U) edge[pre, black] (F);
            \draw (U) edge[pre, black] (Y);
                 \draw (A) edge[pre, black] (Y);

  \end{scope} 
  \end{tikzpicture}
  } \qquad 
\subfloat[$\mathcal{H}^{0}$]{
  \begin{tikzpicture}[>=stealth, node distance=1cm,
pre/.style={>=stealth,ultra thick,line width = 1.4pt}]
  \begin{scope}
    \tikzstyle{format} = [circle, inner sep=2pt,draw, thick, circle, line width=1.4pt, minimum size=3mm]
    \node[format] (A) {$A$};
        \node[format, right of=A] (Y) {$Y$};
        \node[format, right of=Y] (U) {$U$};
             \node[format, below of=A] (L) {$L$};
               \node[right of=L] (O) {};

                 \draw (L) edge[pre, black] (A);

            \draw (U) edge[pre, black] (Y);

  \end{scope} 
  \end{tikzpicture}
  } \qquad 
\subfloat[$\mathcal{H}^{1}$]{
\begin{tikzpicture}[>=stealth, node distance=1cm,
pre/.style={>=stealth,ultra thick,line width = 1.4pt}]
  \begin{scope}
 \tikzstyle{format} = [circle, inner sep=2pt,draw, thick, circle, line width=1.4pt, minimum size=3mm]
    \node[format] (A) {$A$};
        \node[format, right of=A] (Y) {$Y$};
             \node[format, below of=A] (L) {$L$};
               \node[right of=L] (O) {};

                 \draw (L) edge[pre, black] (A);
                     \draw (L) edge[pre, black] (Y);

  \end{scope} 
  \end{tikzpicture}
  }
\caption{Example of the construction of the non-parametric adjustment
efficiency graph, where $\mathbf{L}=\lbrace L \rbrace$ and $\mathbf{N}%
=\lbrace A,Y,L,F\rbrace$. Here $\an_{\mathcal{G}}(\lbrace A,Y\rbrace \cup 
\mathbf{L})=\lbrace A,Y,L,U\rbrace$, $\forb(A,Y,\mathcal{G})=\lbrace
A,Y\rbrace$ and $\ignore(A,Y,\mathcal{G})=\lbrace U\rbrace$. }
\label{fig:only_min}
\end{figure}
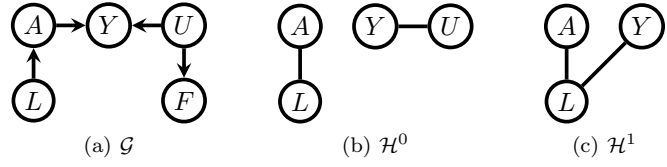

\FloatBarrier We will now define a binary relation in the class of $A-Y$
 cuts in $\mathcal{H}^{1}$ that will aid us in the
construction of our proposed graphical criteria for characterizing optimal $%
\mathbf{L}-\mathbf{N}$ dynamic adjustment sets. For $A-Y$ cuts $\mathbf{Z}%
_{1},\mathbf{Z}_{2}$ in $\mathcal{H}^{1}$, we define $\mathbf{Z}_{1}\unlhd _{%
\mathcal{H}^{1}}\mathbf{Z}_{2}$ if and only if $Y\perp _{\mathcal{H}^{1}}%
\mathbf{Z}_{2}\setminus \mathbf{Z}_{1}\mid \mathbf{Z}_{1}$ and $A\perp _{%
\mathcal{H}^{1}}\mathbf{Z}_{1}\setminus \mathbf{Z}_{2}\mid \mathbf{Z}_{2}.$
For example, in Figure \ref{fig:small_size} b), $\mathbf{Z}%
_{1}=\{W_{1},W_{2},W_{3}\}$ and $\mathbf{Z}_{2}=\{T\}$ are $A-Y$ cuts and $%
\mathbf{Z}_{1}\unlhd _{\mathcal{H}^{1}}\mathbf{Z}_{2}$. \cite{lattice}
showed $\unlhd _{\mathcal{H}^{1}}$ is a partial order in the class of
minimal (minimum) $A-Y$ cuts in $\mathcal{H}^{1}.$

Our next proposition and Proposition \ref{prop:compare_adj} entail that $%
\mathbf{Z}_{1}\unlhd _{\mathcal{H}^{1}}\mathbf{Z}_{2}$ implies $\mathbf{Z}%
_{1}\preceq_{\mathbf{L}}\mathbf{Z}_{2}$ for $\mathbf{L}-\mathbf{N}$ dynamic
adjustment sets $\mathbf{Z}_{1},\mathbf{Z}_{2}$ that are subsets of $\mathbf{V}%
\left( \mathcal{H}^{1}\right)$.

\begin{proposition}
\label{prop:order_implies_indep} If $\mathbf{Z}_{1}$ and $\mathbf{Z}_{2}$
are $\mathbf{L}-\mathbf{N}$ dynamic adjustment sets with respect to $A,Y$ in 
$\mathcal{G}$ such that $\mathbf{Z}_{1},\mathbf{Z}_{2}\subset \mathbf{V}%
\left( \mathcal{H}^{1}\right) $ and $\mathbf{Z}_{1}\unlhd _{\mathcal{H}^{1}}%
\mathbf{Z}_{2}$, then 
\begin{equation}
Y\perp \!\!\!\perp _{\mathcal{G}}\mathbf{Z}_{2}\setminus \mathbf{Z}_{1}\mid
A,\mathbf{Z}_{1}  \label{eq:indep_opt_y}
\end{equation}%
and 
\begin{equation}
A\perp \!\!\!\perp _{\mathcal{G}}\mathbf{Z}_{1}\setminus \mathbf{Z}_{2}\mid 
\mathbf{Z}_{2}.  \label{eq:indep_opt_a}
\end{equation}
\end{proposition}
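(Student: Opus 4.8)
The plan is to translate the undirected-graph relation $\mathbf{Z}_{1}\unlhd_{\mathcal{H}^{1}}\mathbf{Z}_{2}$ into $d$-separations in $\mathcal{G}$ by descending through the chain $\mathcal{H}^{1}\to\mathcal{H}^{0}\to\mathcal{G}'\to\mathcal{G}$, where $\mathcal{G}'\equiv\mathcal{G}^{pbd}_{\an_{\mathcal{G}}(\{A,Y\}\cup\mathbf{L})}(A,Y)$ is the DAG whose moral graph is $\mathcal{H}^{0}$. First I would collect the structural facts I rely on: since $\mathbf{Z}_{1},\mathbf{Z}_{2}\subset\mathbf{V}(\mathcal{H}^{1})\subset\an_{\mathcal{G}}(\{A,Y\}\cup\mathbf{L})$ and $(\mathbf{L},\mathbf{N})$ is admissible, part~3 of Proposition~\ref{prop:equiv_dyn} makes $\mathbf{Z}_{1}$ and $\mathbf{Z}_{2}$ $A$--$Y$ cuts in $\mathcal{H}^{1}$; being dynamic adjustment sets they are static adjustment sets (Proposition~\ref{prop:equiv_dtr}), so by Theorem~\ref{theo:van} each contains $\mathbf{L}$, is disjoint from $\forb(A,Y,\mathcal{G})$, and satisfies $Y\perp\!\!\!\perp_{\mathcal{G}^{pbd}(A,Y)}A\mid\mathbf{Z}_{i}$. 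Unfolding the definition of $\unlhd_{\mathcal{H}^{1}}$ gives $Y\perp_{\mathcal{H}^{1}}\mathbf{Z}_{2}\setminus\mathbf{Z}_{1}\mid\mathbf{Z}_{1}$ and $A\perp_{\mathcal{H}^{1}}\mathbf{Z}_{1}\setminus\mathbf{Z}_{2}\mid\mathbf{Z}_{2}$.

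Next I would push these separations to $\mathcal{H}^{0}$ and then to $\mathcal{G}'$. Applying Lemma~\ref{lemma:indep_equiv} vertexwise --- to each vertex of $\mathbf{Z}_{2}\setminus\mathbf{Z}_{1}$ paired with $Y$ and conditioning set $\mathbf{Z}_{1}$, and to each vertex of $\mathbf{Z}_{1}\setminus\mathbf{Z}_{2}$ paired with $A$ and conditioning set $\mathbf{Z}_{2}$, which is legitimate because each conditioning set lies between $\mathbf{L}$ and $\mathbf{V}(\mathcal{H}^{1})$ --- yields $Y\perp_{\mathcal{H}^{0}}\mathbf{Z}_{2}\setminus\mathbf{Z}_{1}\mid\mathbf{Z}_{1}$ and $A\perp_{\mathcal{H}^{0}}\mathbf{Z}_{1}\setminus\mathbf{Z}_{2}\mid\mathbf{Z}_{2}$. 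Since $\mathcal{H}^{0}=(\mathcal{G}')^{m}$ and, for every vertex subset $S$, the moral graph of $\mathcal{G}'_{S}$ is a subgraph of $(\mathcal{H}^{0})_{S}$, any separation holding throughout $\mathcal{H}^{0}$ also holds in the ancestral moral subgraph of $\mathcal{G}'$ appearing in the moralization criterion \eqref{eq:moral_equiv}; hence $Y\perp\!\!\!\perp_{\mathcal{G}'}\mathbf{Z}_{2}\setminus\mathbf{Z}_{1}\mid\mathbf{Z}_{1}$ and $A\perp\!\!\!\perp_{\mathcal{G}'}\mathbf{Z}_{1}\setminus\mathbf{Z}_{2}\mid\mathbf{Z}_{2}$.

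The final step lifts these from $\mathcal{G}'$ to $\mathcal{G}$. Because all vertices involved lie in the ancestral set $\an_{\mathcal{G}}(\{A,Y\}\cup\mathbf{L})$, I may replace $\mathcal{G}$ by $\tilde{\mathcal{G}}\equiv\mathcal{G}_{\an_{\mathcal{G}}(\{A,Y\}\cup\mathbf{L})}$, and then $\mathcal{G}'$ differs from $\tilde{\mathcal{G}}$ only by deletion of edges out of $A$ --- the first edges $A\to M$ of proper causal paths, for which $M\in\cn(A,Y,\mathcal{G})$. For the second conclusion I take a path in $\tilde{\mathcal{G}}$ from $A$ to $W\in\mathbf{Z}_{1}\setminus\mathbf{Z}_{2}$: if it uses a deleted edge it begins with a maximal initial directed segment $A\to M_{1}\to\cdots\to M_{k}$, where an easy induction (using acyclicity and $\mathbf{L}\subset\nd_{\mathcal{G}}(A)$) forces every $M_{i}\in\cn(A,Y,\mathcal{G})$, so $M_{k}\neq W$ (as $W\notin\forb(A,Y,\mathcal{G})$) and $M_{k}$ is a collider whose descendants all lie in $\forb(A,Y,\mathcal{G})$ and therefore miss $\mathbf{Z}_{2}$, blocking the path; if it uses no deleted edge it is a path of $\mathcal{G}'$ whose colliders, if they have a $\mathbf{Z}_{2}$-descendant in $\tilde{\mathcal{G}}$, already have one in $\mathcal{G}'$ (a witnessing directed path through a deleted edge would place a vertex of $\mathbf{Z}_{2}$ in $\forb(A,Y,\mathcal{G})$), so that an active-in-$\tilde{\mathcal{G}}$ path would be active in $\mathcal{G}'$, contradicting what was shown; hence every such path is blocked, giving $A\perp\!\!\!\perp_{\mathcal{G}}\mathbf{Z}_{1}\setminus\mathbf{Z}_{2}\mid\mathbf{Z}_{2}$.

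For the first conclusion I take a path in $\tilde{\mathcal{G}}$ from $Y$ to $W\in\mathbf{Z}_{2}\setminus\mathbf{Z}_{1}$: if it uses a deleted edge it traverses $A$ as a non-collider and is blocked by $\{A\}\cup\mathbf{Z}_{1}$; otherwise it is a path of $\mathcal{G}'$, and I would show that if it were active given $\{A\}\cup\mathbf{Z}_{1}$ in $\tilde{\mathcal{G}}$ it would be active given $\mathbf{Z}_{1}$ in $\mathcal{G}'$, contradicting the separation obtained above. The one delicate case is a collider $C$ on the path that has no $\mathbf{Z}_{1}$-descendant but lies in $\an_{\mathcal{G}}(A)$, so it is held open only through $A$; concatenating the $Y$-to-$C$ segment of the path with a directed path from $C$ to $A$ that avoids $\mathbf{Z}_{1}$ yields a walk, hence a path, from $Y$ to $A$ in $\mathcal{G}'$ that is active given $\mathbf{Z}_{1}$, contradicting $Y\perp\!\!\!\perp_{\mathcal{G}^{pbd}(A,Y)}A\mid\mathbf{Z}_{1}$; so this case cannot occur, every collider has a genuine $\mathcal{G}'$-descendant in $\mathbf{Z}_{1}$, and the path is blocked, giving $Y\perp\!\!\!\perp_{\mathcal{G}}\mathbf{Z}_{2}\setminus\mathbf{Z}_{1}\mid A,\mathbf{Z}_{1}$. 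The main obstacle is precisely this last step: tracking whether a collider remains open when we simultaneously enlarge the conditioning set (by $A$) and delete the edges out of $A$, and it is resolved by the adjustment-set facts $\mathbf{Z}_{i}\cap\forb(A,Y,\mathcal{G})=\emptyset$ together with Theorem~\ref{theo:van}.
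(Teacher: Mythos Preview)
Your proposal is correct and follows the same overall strategy as the paper: pass from $\mathcal{H}^{1}$ to $\mathcal{H}^{0}$ via Lemma~\ref{lemma:indep_equiv}, from $\mathcal{H}^{0}$ to $\mathcal{G}^{pbd}$ via the moralization criterion~\eqref{eq:moral_equiv}, and then lift from $\mathcal{G}^{pbd}$ to $\mathcal{G}$ by a path-level case analysis exploiting that only edges out of $A$ are deleted and that $\mathbf{Z}_{i}\cap\forb(A,Y,\mathcal{G})=\emptyset$.

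The one notable difference is in how you handle \eqref{eq:indep_opt_y}. You keep the conditioning set equal to $\mathbf{Z}_{1}$ all the way to the $d$-separation in $\mathcal{G}'$, and only introduce $A$ at the very last lifting step; this forces you into the ``delicate case'' of a collider opened solely through $A$, which you dispatch by concatenating with a directed $C\to A$ path and invoking $Y\perp\!\!\!\perp_{\mathcal{G}^{pbd}}A\mid\mathbf{Z}_{1}$. The paper instead adds $A$ to the conditioning set already at the undirected level---noting that $Y\perp_{\mathcal{H}^{0}}\mathbf{Z}_{2}\setminus\mathbf{Z}_{1}\mid\mathbf{Z}_{1}$ trivially strengthens to $Y\perp_{\mathcal{H}^{0}}\mathbf{Z}_{2}\setminus\mathbf{Z}_{1}\mid\mathbf{Z}_{1},A$---and then carries $\{A\}\cup\mathbf{Z}_{1}$ through the moralization step (using $\an_{\mathcal{G}}(\{A,Y\}\cup\mathbf{Z}_{1}\cup\mathbf{Z}_{2})=\an_{\mathcal{G}}(\{A,Y\}\cup\mathbf{L})$). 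With the same conditioning set on both sides, the lift to $\mathcal{G}$ is a direct check that an active path in $\mathcal{G}$ survives in $\mathcal{G}^{pbd}$, and your delicate case never arises. Your route works, but if you adopt the paper's earlier insertion of $A$ you can drop that extra argument entirely. (A small point: in your delicate case you should take $C$ to be the collider of this type \emph{closest to $Y$}, so that the $Y$-to-$C$ segment is already active given $\mathbf{Z}_{1}$ in $\mathcal{G}'$.)
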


Theorems 1 and 2 of \cite{lattice} imply that the set of all minimal
(minimum) $A-Y$ cuts in $\mathcal{H}^{1}$ are lattices with respect to $%
\unlhd _{\mathcal{H}^{1}}$with the infimum between two minimal (minimum) $%
A-Y $ cuts in $\mathcal{H}^{1}$ given by 
\begin{equation*}
\mathbf{Z}_{1}\wedge _{\mathcal{H}^{1}}\mathbf{Z}_{2}\equiv \partial _{%
\mathcal{H}^{1}}\left\{ \cc(\mathbf{Z}_{1}\cup \mathbf{Z}_{2},Y,\mathcal{H}%
^{1})\right\},
\end{equation*}%
which is a subset of $\mathbf{Z}_{1}\cup \mathbf{Z}_{2}$. This result,
together with Propositions \ref{prop:compare_adj}, \ref{prop:equiv_dyn} and %
\ref{prop:order_implies_indep}, entails the following Proposition.

\begin{proposition}
\label{prop:inf_minimal} Assume that $(\mathbf{L,N})$ is an admissible pair
with respect to $A,Y$ in $\mathcal{G}$. Then, the set of all minimal
(minimum) $\mathbf{L}-\mathbf{N}$ dynamic adjustment sets is a lattice with
respect to $\unlhd _{\mathcal{H}^{1}}$. Specifically, if $\ \mathbf{Z}_{1}$
and $\mathbf{Z}_{2}$ are minimal (minimum) $\mathbf{L}-\mathbf{N}$ dynamic
adjustment sets, then $\mathbf{Z}_{1}\wedge _{\mathcal{H}^{1}}\mathbf{Z}_{2}$
is a minimal (minimum) $\mathbf{L}-\mathbf{N}$ dynamic adjustment set.
Furthermore, 
\begin{equation}
(\mathbf{Z}_{1}\wedge _{\mathcal{H}^{1}}\mathbf{Z}_{2})\preceq_{\mathbf{L}}%
\mathbf{Z}_{1}\text{ and }(\mathbf{Z}_{1}\wedge _{\mathcal{H}^{1}}\mathbf{Z}%
_{2})\preceq _{\mathbf{L}}\mathbf{Z}_{2}.  \label{eq:infimum}
\end{equation}
\end{proposition}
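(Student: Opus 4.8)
The plan is to transfer the lattice structure that Theorems~1 and~2 of \cite{lattice} establish for the minimal (minimum) $A-Y$ cuts of $\mathcal{H}^{1}$ over to the class of minimal (minimum) $\mathbf{L}-\mathbf{N}$ dynamic adjustment sets, using the identification in Proposition~\ref{prop:equiv_dyn}, and then to obtain the efficiency inequalities \eqref{eq:infimum} by feeding $\unlhd_{\mathcal{H}^{1}}$ into Propositions~\ref{prop:order_implies_indep} and~\ref{prop:compare_adj}.

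First I would record that, since $(\mathbf{L},\mathbf{N})$ is admissible, Proposition~\ref{prop:equiv_dyn}(1) guarantees $A$ and $Y$ are non-adjacent in $\mathcal{H}^{1}$, so the cited results of \cite{lattice} apply: $\unlhd_{\mathcal{H}^{1}}$ is a partial order on the minimal (resp.\ minimum) $A-Y$ cuts of $\mathcal{H}^{1}$, this class is a lattice under it, and the infimum of two such cuts is $\mathbf{Z}_{1}\wedge_{\mathcal{H}^{1}}\mathbf{Z}_{2}=\partial_{\mathcal{H}^{1}}\{\cc(\mathbf{Z}_{1}\cup\mathbf{Z}_{2},Y,\mathcal{H}^{1})\}\subseteq \mathbf{Z}_{1}\cup\mathbf{Z}_{2}$. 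By parts~4 and~5 of Proposition~\ref{prop:equiv_dyn}, a set is a minimal (minimum) $\mathbf{L}-\mathbf{N}$ dynamic adjustment set if and only if it is a minimal (minimum) $A-Y$ cut in $\mathcal{H}^{1}$; since $\unlhd_{\mathcal{H}^{1}}$ is defined purely through separations in $\mathcal{H}^{1}$ and does not reference $\mathcal{G}$, this equivalence is an isomorphism of ordered sets, so the minimal (minimum) $\mathbf{L}-\mathbf{N}$ dynamic adjustment sets form a lattice under $\unlhd_{\mathcal{H}^{1}}$, and for minimal (minimum) such sets $\mathbf{Z}_{1},\mathbf{Z}_{2}$ the set $\mathbf{Z}_{1}\wedge_{\mathcal{H}^{1}}\mathbf{Z}_{2}$ is again a minimal (minimum) $\mathbf{L}-\mathbf{N}$ dynamic adjustment set.

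For \eqref{eq:infimum}, I would use that $\mathbf{Z}_{1}\wedge_{\mathcal{H}^{1}}\mathbf{Z}_{2}$, being the infimum, satisfies $\mathbf{Z}_{1}\wedge_{\mathcal{H}^{1}}\mathbf{Z}_{2}\unlhd_{\mathcal{H}^{1}}\mathbf{Z}_{i}$ for $i=1,2$. All three sets are $\mathbf{L}-\mathbf{N}$ dynamic adjustment sets contained in $\mathbf{V}(\mathcal{H}^{1})$: $\mathbf{Z}_{1}$ and $\mathbf{Z}_{2}$ because they are $A-Y$ cuts in $\mathcal{H}^{1}$, and $\mathbf{Z}_{1}\wedge_{\mathcal{H}^{1}}\mathbf{Z}_{2}$ because it has the form $\partial_{\mathcal{H}^{1}}(\cdot)\subseteq \mathbf{V}(\mathcal{H}^{1})$. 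Applying Proposition~\ref{prop:order_implies_indep} with its two sets taken to be $\mathbf{Z}_{1}\wedge_{\mathcal{H}^{1}}\mathbf{Z}_{2}$ and $\mathbf{Z}_{1}$ yields $Y\ort_{\mathcal{G}}\mathbf{Z}_{1}\setminus(\mathbf{Z}_{1}\wedge_{\mathcal{H}^{1}}\mathbf{Z}_{2})\mid A,\mathbf{Z}_{1}\wedge_{\mathcal{H}^{1}}\mathbf{Z}_{2}$ and $A\ort_{\mathcal{G}}(\mathbf{Z}_{1}\wedge_{\mathcal{H}^{1}}\mathbf{Z}_{2})\setminus\mathbf{Z}_{1}\mid\mathbf{Z}_{1}$; these are exactly the two hypotheses of Proposition~\ref{prop:compare_adj} with $\mathbf{G}=\mathbf{Z}_{1}\wedge_{\mathcal{H}^{1}}\mathbf{Z}_{2}$ and $\mathbf{B}=\mathbf{Z}_{1}$, so that proposition gives $\sigma^{2}_{\pi,\mathbf{Z}_{1}}(P)-\sigma^{2}_{\pi,\mathbf{Z}_{1}\wedge_{\mathcal{H}^{1}}\mathbf{Z}_{2}}(P)\ge 0$ for every $\pi(A\mid\mathbf{L})$ and every $P\in\mathcal{M}(\mathcal{G})$, i.e.\ $(\mathbf{Z}_{1}\wedge_{\mathcal{H}^{1}}\mathbf{Z}_{2})\preceq_{\mathbf{L}}\mathbf{Z}_{1}$; the same argument with $\mathbf{Z}_{2}$ in place of $\mathbf{Z}_{1}$ finishes the proof.

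This argument is essentially a matter of stitching together three previously established propositions, so no step is genuinely deep; the one place warranting care is verifying the hypotheses of Proposition~\ref{prop:order_implies_indep} — namely that $\mathbf{Z}_{1}\wedge_{\mathcal{H}^{1}}\mathbf{Z}_{2}$ is itself an $\mathbf{L}-\mathbf{N}$ dynamic adjustment set contained in $\mathbf{V}(\mathcal{H}^{1})$, which is where both the equivalence of Proposition~\ref{prop:equiv_dyn} and the explicit boundary form of the infimum are used — together with checking that the order-theoretic statements of \cite{lattice}, phrased for vertex cuts, transfer verbatim because $\unlhd_{\mathcal{H}^{1}}$ is intrinsic to $\mathcal{H}^{1}$.
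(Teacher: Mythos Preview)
Your proposal is correct and follows essentially the same route as the paper: the paper states, immediately before Proposition~\ref{prop:inf_minimal}, that ``This result [Theorems~1 and~2 of \cite{lattice}], together with Propositions~\ref{prop:compare_adj}, \ref{prop:equiv_dyn} and~\ref{prop:order_implies_indep}, entails the following Proposition,'' and your argument is precisely the detailed unpacking of that sentence. Your care in checking that $\mathbf{Z}_{1}\wedge_{\mathcal{H}^{1}}\mathbf{Z}_{2}\subset\mathbf{V}(\mathcal{H}^{1})$ so that Proposition~\ref{prop:order_implies_indep} applies is appropriate and matches the paper's implicit reasoning.
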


\cite{eff-adj} showed that $\preceq $ is not a total preorder in the class
of minimal static adjustment sets (see their Example 2). This, implies that $%
\preceq_{\mathbf{L}}$ is not a total preorder. Nevertheless, Proposition \ref%
{prop:inf_minimal} implies that given two minimal (minimum) $\mathbf{L-N}$
dynamic adjustment sets $\mathbf{Z}_{1}$ and $\mathbf{Z}_{2}$ there exists
another one, namely $\mathbf{Z}_{1}\wedge _{\mathcal{H}^{1}}\mathbf{Z}_{2},$
included in their union which satisfies $\left( \ref{eq:infimum}\right) $.
The set $\mathbf{Z}_{1}\wedge _{\mathcal{H}^{1}}\mathbf{Z}_{2}$ thus yields
an NP-$\mathbf{Z}$ estimator of $\chi _{\pi }\left( P;\mathcal{G}\right) $
with variance smaller than or equal to the variance of the NP-$\mathbf{Z}$
estimators of $\chi _{\pi }\left( P;\mathcal{G}\right) $ that adjust for $%
\mathbf{Z}_{1}$ or for $\mathbf{Z}_{2},$ under any $P\in \mathcal{M(G)}$.

Next, we define the following sets which are our candidates for the optimal,
optimal minimal and optimal minimum $\mathbf{L}-\mathbf{N}$ dynamic
adjustment sets respectively,%
\begin{equation*}
\mathbf{O}(A,Y,\mathbf{L},\mathbf{N},\mathcal{G})\equiv \nb_{\mathcal{H}%
^{1}}(Y),\text{ }\mathbf{O}_{min}(A,Y,\mathbf{L},\mathbf{N},\mathcal{G}%
)\equiv \partial _{\mathcal{H}^{1}}\left\{ \cc(\nb_{\mathcal{H}^{1}}(Y),A,%
\mathcal{G})\right\}
\end{equation*}%
and for $\mathcal{C}_{\mathcal{H}^{1}}^{\ast }(A,Y)\equiv \{\mathbf{Z}%
_{1},\dots ,\mathbf{Z}_{l}\}$ the class of all minimum $A-Y$ cuts in $%
\mathcal{H}^{1},$ we define 
\begin{equation*}
\mathbf{O}_{m}(A,Y,\mathbf{L},\mathbf{N},\mathcal{G})\equiv \mathbf{Z}%
_{1}\wedge _{\mathcal{H}^{1}}\mathbf{Z}_{2}\wedge _{\mathcal{H}^{1}}\dots
\wedge _{\mathcal{H}^{1}}\mathbf{Z}_{l}
\end{equation*}

For brevity henceforth we will write $\mathbf{O},\mathbf{O}_{min}$ and $%
\mathbf{O}_{m}$ instead of $\mathbf{O}(A,Y,\mathbf{L},\mathbf{N},\mathcal{G}%
),\mathbf{O}_{min}(A,Y,\mathbf{L},\mathbf{N},\mathcal{G})$ and $\mathbf{O}%
_{m}(A,Y,\mathbf{L},\mathbf{N},\mathcal{G})$ respectively. The set $\mathbf{O%
}$ is comprised of vertices adjacent to $Y$ in $\mathcal{H}^{1}$ and $%
\mathbf{O}_{min}$ is the set of vertices in $\mathbf{O}$ that have at least
one path to $A$ in $\mathcal{H}^{1}$ that does not intersect any other
vertices of $\mathbf{O}$. It can be shown that if $\mathbf{L}=\emptyset $
and $\mathbf{N}=\mathbf{V} $ then $\mathbf{O}=\pa_{\mathcal{G}}(\cn(A,Y,%
\mathcal{G}))\setminus \forb_{\mathcal{G}}(A,Y,\mathcal{G})$ and $\mathbf{O}%
_{min}$ is equal to the smallest subset of $\mathbf{O}$ that satisfies $%
A\perp \!\!\!\perp _{\mathcal{G}}\mathbf{O}\setminus \mathbf{O}_{min}\mid 
\mathbf{O}_{min}$. Thus, our definitions of $\mathbf{O}$ and $\mathbf{O}%
_{min}$ coincide with the ones in \cite{eff-adj} in the special case in
which $\mathbf{L}=\emptyset $ and $\mathbf{N}=\mathbf{V}$. If $\mathbf{N}=%
\mathbf{V}$, it can be shown that $\mathbf{O}(A,Y,\mathbf{L},\mathbf{N},%
\mathcal{G})$ coincides with $\mathbf{O}(A,Y,\mathbf{L},\mathcal{G})$ as
defined in Section \ref{sec:compare}.

As for $\mathbf{O}_{m}$, we note that there are graphs for which the number
of minimum $A-Y$ cuts is exponential in the number of vertices in the graph,
yet although $\mathbf{O}_{m}$ is the infimum over all minimum $A-Y$ cuts,
its computation does not require enumeration of all the cuts. In fact, in
Section \ref{sec:algos} we provide a polynomial time algorithm to compute $%
\mathbf{O}_{m}$. We also provide a polynomial time algorithm to compute $%
\mathbf{O}_{min}$. On the other hand, one can trivially compute $\mathbf{O}$
in polynomial time by checking which variables are neighbors of $Y$ in $%
\mathcal{H}^{1}$. The following Theorem establishes that when $(\mathbf{L,N})
$ is an admissible pair, $\mathbf{O}_{\min }$ and $\mathbf{O}_{m}$ are the
optimal minimal and minimum $\mathbf{L}-\mathbf{N}$ dynamic adjustment sets
respectively. In addition $\mathbf{O}$ is a globally optimum $\mathbf{L}-%
\mathbf{N}$ dynamic adjustment set provided $\mathbf{N}\subset \an_{\mathcal{%
G}}(\{A,Y\}\cup \mathbf{L})$ or $\mathbf{N}=\mathbf{V}$.

\begin{theorem}
\label{theo:main_indep} Assume that $(\mathbf{L,N})$ is an admissible pair
with respect to $A,Y$ in $\mathcal{G}$. Then

\begin{enumerate}
\item $\mathbf{O}$ is an $\mathbf{L}-\mathbf{N}$ dynamic adjustment set with
respect to $A,Y$ in $\mathcal{G}$. In addition, if $\ \mathbf{N}\subset \an_{%
\mathcal{G}}(\{A,Y\}\cup \mathbf{L})$ or if $\ \mathbf{N}=\mathbf{V}$, then
for any other $\mathbf{L}-\mathbf{N}$ dynamic adjustment set $\mathbf{Z}$ it
holds that 
\begin{equation*}
Y\perp \!\!\!\perp _{\mathcal{G}}\mathbf{Z}\setminus \mathbf{O}\mid A,%
\mathbf{O}\quad \text{and}\quad A\perp \!\!\!\perp _{\mathcal{G}}\mathbf{O}%
\setminus \mathbf{Z}\mid \mathbf{Z}.
\end{equation*}%
Consequently $\mathbf{O}\preceq_{\mathbf{L}}\mathbf{Z}.$

\item $\mathbf{O}_{min}$ is a minimal $\mathbf{L}-\mathbf{N}$ dynamic
adjustment set with respect to $A,Y$ in $\mathcal{G}$. In addition, for any
other minimal $\mathbf{L}-\mathbf{N}$ dynamic adjustment set $\mathbf{Z}$ it
holds that 
\begin{equation*}
Y\perp \!\!\!\perp _{\mathcal{G}}\mathbf{Z}\setminus \mathbf{O}_{min}\mid A,%
\mathbf{O}_{min}\quad \text{and}\quad A\perp \!\!\!\perp _{\mathcal{G}}%
\mathbf{O}_{min}\setminus \mathbf{Z}\mid \mathbf{Z}.
\end{equation*}%
Consequently $\mathbf{O}_{\min }\preceq_{\mathbf{L}} \mathbf{Z}.$

\item $\mathbf{O}_{m}$ is a minimum $\mathbf{L}-\mathbf{N}$ dynamic
adjustment set with respect to $A,Y$ in $\mathcal{G}$. In addition, for any
other minimum $\mathbf{L}-\mathbf{N}$ dynamic adjustment set $\mathbf{Z}$ it
holds that 
\begin{equation*}
Y\perp \!\!\!\perp _{\mathcal{G}}\mathbf{Z}\setminus \mathbf{O}_{m}\mid A,%
\mathbf{O}_{m}\quad \text{and}\quad A\perp \!\!\!\perp _{\mathcal{G}}\mathbf{%
O}_{m}\setminus \mathbf{Z}\mid \mathbf{Z}.
\end{equation*}%
Consequently $\mathbf{O}_{m}\preceq_{\mathbf{L}}\mathbf{Z}.$
\end{enumerate}
\end{theorem}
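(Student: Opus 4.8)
The plan is to carry out the entire comparison inside the undirected graph $\mathcal{H}^1$. By Proposition~\ref{prop:equiv_dyn}, among subsets of $\an_{\mathcal{G}}(\{A,Y\}\cup\mathbf{L})$ the $\mathbf{L}-\mathbf{N}$ dynamic adjustment sets are exactly the $A-Y$ cuts of $\mathcal{H}^1$, the minimal ones the minimal cuts, and the minimum ones the minimum cuts; moreover $\unlhd_{\mathcal{H}^1}$ is a surrogate for $\preceq_{\mathbf{L}}$: if $\mathbf{W}$ and $\mathbf{Z}$ are $\mathbf{L}-\mathbf{N}$ dynamic adjustment sets contained in $\mathbf{V}(\mathcal{H}^1)$ with $\mathbf{W}\unlhd_{\mathcal{H}^1}\mathbf{Z}$, then Proposition~\ref{prop:order_implies_indep} yields $Y\perp\!\!\!\perp_{\mathcal{G}}\mathbf{Z}\setminus\mathbf{W}\mid A,\mathbf{W}$ and $A\perp\!\!\!\perp_{\mathcal{G}}\mathbf{W}\setminus\mathbf{Z}\mid\mathbf{Z}$, whence Proposition~\ref{prop:compare_adj} gives $\mathbf{W}\preceq_{\mathbf{L}}\mathbf{Z}$. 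Thus, setting aside the case $\mathbf{N}=\mathbf{V}$ of Part~1 (treated at the end), it suffices to prove that $\mathbf{O}$, $\mathbf{O}_{min}$, $\mathbf{O}_m$ are, respectively, an $A-Y$ cut, a minimal $A-Y$ cut and a minimum $A-Y$ cut of $\mathcal{H}^1$ --- so that, by parts~2, 4 and 5 of Proposition~\ref{prop:equiv_dyn}, they are an $\mathbf{L}-\mathbf{N}$ dynamic adjustment set, a minimal one and a minimum one --- and that each is $\unlhd_{\mathcal{H}^1}$-below, respectively, every $A-Y$ cut, every minimal $A-Y$ cut, and every minimum $A-Y$ cut. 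That $\mathbf{O}=\nb_{\mathcal{H}^1}(Y)$ is an $A-Y$ cut is clear, since deleting all neighbours of $Y$ isolates $Y$ and $A\notin\mathbf{O}$ by part~1 of Proposition~\ref{prop:equiv_dyn}; $\mathbf{O}_{min}=\partial_{\mathcal{H}^1}\{\cc(\mathbf{O},A,\mathcal{H}^1)\}$ is the neighbour set in $\mathcal{H}^1$ of $\cc(\mathbf{O},A,\mathcal{H}^1)$, hence an $A-Y$ separator whose $A$-side is a full component of it, and since all its vertices are neighbours of $Y$ its $Y$-side is a full component too, so $\mathbf{O}_{min}$ is minimal; and, by Proposition~\ref{prop:inf_minimal}, the minimum cuts are closed under $\wedge_{\mathcal{H}^1}$, so the iterated meet $\mathbf{O}_m$ of all of them is a minimum cut and, being a lattice meet, their infimum.

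Every competitor $\mathbf{Z}$ in Parts~2 and~3 is a minimal (respectively minimum) $A-Y$ cut of $\mathcal{H}^1$ by Proposition~\ref{prop:equiv_dyn}, hence lies in $\mathbf{V}(\mathcal{H}^1)$; the same holds in Part~1 whenever $\mathbf{N}\subseteq\an_{\mathcal{G}}(\{A,Y\}\cup\mathbf{L})$, since then any $\mathbf{L}-\mathbf{N}$ dynamic adjustment set lies in $\an_{\mathcal{G}}(\{A,Y\}\cup\mathbf{L})$ and part~3 of Proposition~\ref{prop:equiv_dyn} applies. For $\mathbf{O}$ and an arbitrary $A-Y$ cut $\mathbf{Z}$ of $\mathcal{H}^1$: $Y\perp_{\mathcal{H}^1}\mathbf{Z}\setminus\mathbf{O}\mid\mathbf{O}$ is immediate since $\mathbf{O}=\nb_{\mathcal{H}^1}(Y)$, and $A\perp_{\mathcal{H}^1}\mathbf{O}\setminus\mathbf{Z}\mid\mathbf{Z}$ holds because a path from $A$ to a vertex $V\in\mathbf{O}\setminus\mathbf{Z}$ avoiding $\mathbf{Z}$ could be prolonged by the edge $V-Y$ into an $A-Y$ path avoiding $\mathbf{Z}$, contradicting that $\mathbf{Z}$ is a cut; hence $\mathbf{O}\unlhd_{\mathcal{H}^1}\mathbf{Z}$. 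For $\mathbf{O}_m$ and any minimum cut $\mathbf{Z}$: $\mathbf{O}_m$ is the infimum, in the finite lattice of minimum cuts, of the set of all minimum cuts, so $\mathbf{O}_m\unlhd_{\mathcal{H}^1}\mathbf{Z}$ by definition. Finally, for $\mathbf{O}_{min}$ and a minimal cut $\mathbf{Z}$, the $A$-side requirement $A\perp_{\mathcal{H}^1}\mathbf{O}_{min}\setminus\mathbf{Z}\mid\mathbf{Z}$ follows from the $\mathbf{O}$ case, since $\mathbf{O}_{min}\subseteq\mathbf{O}$ gives $\mathbf{O}_{min}\setminus\mathbf{Z}\subseteq\mathbf{O}\setminus\mathbf{Z}$.

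The crux is the $Y$-side requirement $Y\perp_{\mathcal{H}^1}\mathbf{Z}\setminus\mathbf{O}_{min}\mid\mathbf{O}_{min}$, i.e.\ that $\mathbf{O}_{min}$ is the $\unlhd_{\mathcal{H}^1}$-least element of the lattice of minimal $A-Y$ cuts. Write $\mathbf{S}=\cc(\mathbf{O},A,\mathcal{H}^1)$, so $\mathbf{O}_{min}=\partial_{\mathcal{H}^1}\mathbf{S}$ separates $\mathbf{S}$ (which contains $A$) from the remainder of $\mathcal{H}^1$. One has to show that no vertex of $\mathbf{Z}\setminus\mathbf{O}_{min}$ lies in the $Y$-side component $\cc(\mathbf{O}_{min},Y,\mathcal{H}^1)$; equivalently, $\mathbf{O}_{min}\wedge_{\mathcal{H}^1}\mathbf{Z}=\mathbf{O}_{min}$. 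The argument uses, on one hand, that $\mathbf{Z}$ is minimal, so each of its vertices lies on an $A-Y$ path meeting $\mathbf{Z}$ only there and hence cannot be reached from $A$ on the far side of $\mathbf{O}_{min}$ without traversing $\mathbf{O}_{min}$; and, on the other hand, that $\mathbf{O}_{min}$ is the $A$-frontier of the outermost cut $\nb_{\mathcal{H}^1}(Y)$, which makes $\cc(\mathbf{O}_{min},Y,\mathcal{H}^1)$ the smallest $Y$-side component attainable by any minimal cut. At the level of the cut lattice, what is needed is the $\unlhd_{\mathcal{H}^1}$-monotonicity of the map $\mathbf{C}\mapsto\partial_{\mathcal{H}^1}\{\cc(\mathbf{C},A,\mathcal{H}^1)\}$ on $A-Y$ cuts, together with $\partial_{\mathcal{H}^1}\{\cc(\mathbf{Z},A,\mathcal{H}^1)\}=\mathbf{Z}$ for minimal $\mathbf{Z}$; both are of the same nature as the cut-lattice facts in \cite{lattice}, which I would invoke. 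Once this is in hand, Propositions~\ref{prop:order_implies_indep} and~\ref{prop:compare_adj} complete Parts~2 and~3, and also Part~1 when $\mathbf{N}\subseteq\an_{\mathcal{G}}(\{A,Y\}\cup\mathbf{L})$.

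For Part~1 with $\mathbf{N}=\mathbf{V}$, a competing dynamic adjustment set need not lie in $\an_{\mathcal{G}}(\{A,Y\}\cup\mathbf{L})$, so $\mathcal{H}^1$ is no longer adequate; instead I would establish the identity $\mathbf{O}(A,Y,\mathbf{L},\mathbf{V},\mathcal{G})=\mathbf{O}(A,Y,\mathbf{L},\mathcal{G})$ --- a direct graph computation identifying $\nb_{\mathcal{H}^1}(Y)$ with $\pa_{\mathcal{G}}(\cn(A,Y,\mathcal{G}))\setminus\forb(A,Y,\mathcal{G})$ together with $\mathbf{L}$, starting from the description of $\mathcal{H}^0$ as the moralized proper back-door graph restricted to $\an_{\mathcal{G}}(\{A,Y\}\cup\mathbf{L})$ --- and then conclude from Proposition~\ref{prop:opt_adj}; the first assertion of Part~1 (that $\mathbf{O}$ is a dynamic adjustment set) needs no extra hypothesis and was obtained in the first paragraph. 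I expect the main obstacle to be the $Y$-side comparison for $\mathbf{O}_{min}$, namely showing that $\mathbf{O}_{min}$ is the bottom of the lattice of minimal $A-Y$ cuts of $\mathcal{H}^1$; verifying that $\mathbf{O}_{min}$ is a minimal cut and the identification of $\mathbf{O}$ when $\mathbf{N}=\mathbf{V}$ are comparatively routine.
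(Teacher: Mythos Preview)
Your proposal is correct and follows essentially the same route as the paper: reduce everything to the undirected graph $\mathcal{H}^{1}$ via Proposition~\ref{prop:equiv_dyn}, establish the $\unlhd_{\mathcal{H}^{1}}$ comparisons there, and then pull back to d-separations in $\mathcal{G}$ using Proposition~\ref{prop:order_implies_indep} (and Proposition~\ref{prop:compare_adj} for the efficiency ordering); the case $\mathbf{N}=\mathbf{V}$ of Part~1 is handled separately through Proposition~\ref{prop:opt_adj}. The paper is somewhat terser than you are: it simply invokes Theorems~1 and~2 of \cite{lattice} to obtain both that $\mathbf{O}_{min}$ and $\mathbf{O}_{m}$ are minimal (resp.\ minimum) $A$--$Y$ cuts and that they are $\unlhd_{\mathcal{H}^{1}}$-least in their respective classes, whereas you verify minimality of $\mathbf{O}_{min}$ and the $\unlhd_{\mathcal{H}^{1}}$ comparison for $\mathbf{O}$ by hand before appealing to \cite{lattice} for the remaining ``$Y$-side'' inequality for $\mathbf{O}_{min}$; but the structure and the key inputs are the same.
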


\begin{example}
\label{ex:all_equal} Consider the graphs in Figure \ref{fig:all_equal}.
There is only one $A-Y$ cut in $\mathcal{H}^{1}$ in Figure \ref%
{fig:all_equal} c). Thus, in this case, $\mathbf{O}=\mathbf{O}_{min}=\mathbf{%
O}_{m} =\lbrace L,F \rbrace$.
\end{example}

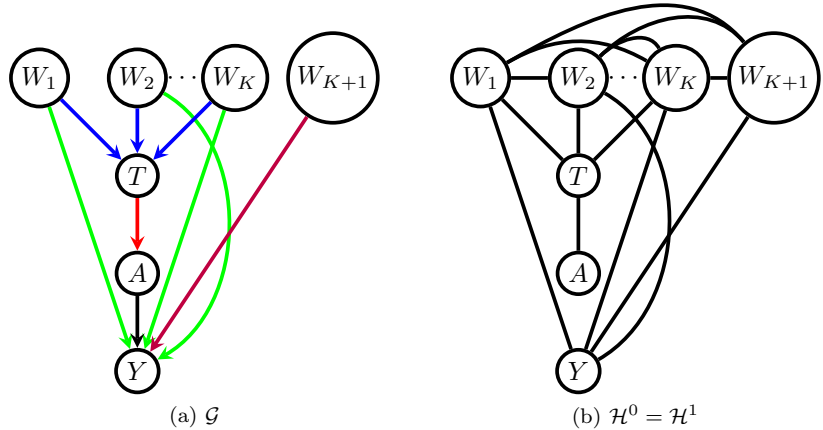
\begin{figure}[ht!]
\centering
\subfloat[$\mathcal{G}$]{
\begin{tikzpicture}[>=stealth, node distance=1.3cm,
pre/.style={->,>=stealth,ultra thick,line width = 1.4pt}]
  \begin{scope}
    \tikzstyle{format} = [circle, inner sep=2pt,draw, thick, circle, line width=1.4pt, minimum size=3mm]

    \node[format] (1) {$W_{1}$};
        \node[format,right of=1] (2) {$W_{2}$};
        \node[format,right of=2] (K) {$W_{K}$};
	\node[format,right of=K] (K1) {$W_{K+1}$};
                \node[format,below of=2] (L) {$T$};
            \node[format,below of=L] (A) {$A$};
             \node[format,below of=A] (Y) {$Y$};

        \path (2) -- node[auto=true]{$\cdots$} (K);
                 \draw (1) edge[pre, blue] (L);
          \draw (1) edge[pre, green] (Y);
                  \draw (2) edge[pre, blue] (L);
          \draw (2) edge[pre, green, out=330, in=30] (Y);
          \draw (K) edge[pre, blue] (L);
          \draw (K) edge[pre, green] (Y);
           \draw (A) edge[pre, black] (Y);
          \draw (L) edge[pre, red] (A);
                    \draw (K1) edge[pre, purple] (Y);

  \end{scope} 
  \end{tikzpicture}
  } \qquad 
\subfloat[$\mathcal{H}^{0}=\mathcal{H}^1$]{
\begin{tikzpicture}[>=stealth, node distance=1.3cm,
pre/.style={>=stealth,ultra thick,line width = 1.4pt}]
  \begin{scope}
    \tikzstyle{format} = [circle, inner sep=2pt,draw, thick, circle, line width=1.4pt, minimum size=3mm]

    \node[format] (1) {$W_{1}$};
        \node[format,right of=1] (2) {$W_{2}$};
        \node[format,right of=2] (K) {$W_{K}$};
	\node[format,right of=K] (K1) {$W_{K+1}$};
                \node[format,below of=2] (L) {$T$};
            \node[format,below of=L] (A) {$A$};
             \node[format,below of=A] (Y) {$Y$};

        \path (2) -- node[auto=true]{$\cdots$} (K);
                 \draw (1) edge[pre, black] (L);
          \draw (1) edge[pre, black] (Y);
                  \draw (2) edge[pre, black] (L);
          \draw (2) edge[pre, black, out=330, in=30] (Y);
          \draw (K) edge[pre, black] (L);
          \draw (K) edge[pre, black] (Y);
           \draw (1) edge[pre, black] (2);
           \draw (K) edge[pre, black] (K1);
           
          \draw (1) edge[pre, black, out=30, in=150] (K);
             \draw (1) edge[pre, black, out=30, in=130] (K1);
         \draw (2) edge[pre, black, out=45, in=120] (K);
          \draw (2) edge[pre, black, out=45, in=130] (K1);
          \draw (L) edge[pre, black] (A);
                    \draw (K1) edge[pre, black] (Y);

  \end{scope} 
  \end{tikzpicture}
  } 
\caption{Example of the construction of the non-parametric adjustment
efficiency graph, where $\mathbf{L}=\emptyset$ and $\mathbf{N}=%
\mathbf{V}$. Here, $K>2$, and the $\cdots$ between $W_{2}$ and $W_{K}$ stand
for the same pattern repeating itself. Thus, in $\mathcal{G}$, all vertices $%
W_{i}$, $i=1,\dots,K$ are parents of $T$ and of $Y$. In $\mathcal{H}^{0}=\mathcal{H}^{1}$ all vertices $W_{i}$, $i=1,\dots,K$ are adjacent to 
$T$, to $Y$ and to all $W_{j}$, $j=1,\dots,K+1$, $j\neq i$. In this example $
\an_{\mathcal{G}}(\lbrace A,Y\rbrace \cup \mathbf{L})=\mathbf{V}$, $\forb%
(A,Y,\mathcal{G})=\lbrace A,Y\rbrace$ and $\ignore(A,Y,\mathcal{G})=\emptyset
$. }
\label{fig:general_size}
\end{figure}

\begin{example}
\label{ex:general_size} Note that the DAG in Figure \ref{fig:general_size}
a) contains as a special case, when $K=3$, the DAG in Figure \ref%
{fig:small_size} a). In $\mathcal{H}^{1}$ in Figure \ref{fig:general_size}
b), the neighbors of $Y$ are $W_{1},\dots ,W_{K+1}$ and thus $\mathbf{O}%
=\{W_{1},\dots ,W_{K+1}\}$. Of the neighbors of $Y$ in $\mathcal{H}^{1}$ ,
only $W_{1},\dots ,W_{K}$ have paths to $A$ that don't intersect other
neighbors of $Y$. Thus $\mathbf{O}_{min}=\{W_{1},\dots ,W_{K}\}$. There is
only one minimum $A-Y$ cut in $\mathcal{H}^{1}$ and it is given by $\{T\}$.
Thus $\mathbf{O}_{m}=\{T\}$. For a large $K$, this example shows that the
optimal minimum $\mathbf{L}-\mathbf{N}$ dynamic adjustment set may have a
much smaller cardinality, in this case cardinality 1, than the optimal
minimal $\mathbf{L}-\mathbf{N}$ dynamic adjustment set, which in this case
has cardinality $K$. It can be shown that in this example $\mathbf{O}_{m}$
can be much less efficient than $\mathbf{O}_{min}$, and in turn $\mathbf{O}%
_{min}$ can be much less efficient than $\mathbf{O}$. Informally, $\mathbf{O}%
_{m}$ will be much less efficient than $\mathbf{O}_{min}$ when in Figure \ref%
{fig:general_size} a), the associations encoded in the green and red arrows
are strong and the associations encoded in the blue arrows are weak. $%
\mathbf{O}_{min}$ will be much less efficient than $\mathbf{O}$ when the
associations encoded in the green arrows are weak, and the associations
encoded in the blue, red and purple arrows are strong.
\end{example}

\begin{example}
\label{ex:no_opt} Consider the DAG in Figure \ref{fig:no_opt} a). Here $%
\mathbf{L}=\emptyset$, $\mathbf{N}=\lbrace A,Y,Z_{1},Z_{2}\rbrace$. Note
that $\mathbf{N}\neq \mathbf{V}(\mathcal{G})$ and $\an_{\mathcal{G}}(\lbrace
A,Y\rbrace \cup \mathbf{L}) \not\subset \mathbf{N}$ and hence the
assumptions needed in part 1) of Theorem \ref{theo:main_indep} for $\mathbf{O%
}$ to be the optimal $\mathbf{L}-\mathbf{N}$ dynamic adjustment set do not
hold.

All possible $\mathbf{L}-\mathbf{N}$ dynamic adjustment sets in $\mathcal{G}$
are $\mathbf{Z}^{\ast }=\emptyset $, ${\mathbf{Z}^{\ast \ast }}\mathbf{=}%
\left\{ Z_{1},Z_{2}\right\} $ and $\mathbf{Z}^{\ast \ast \ast }=\left\{
Z_{1}\right\} $. \cite{eff-adj} showed that $\mathbf{Z}^{\ast }\preceq $ $%
\mathbf{Z}^{\ast \ast \ast }$ and that no optimal static, and consequently
no optimal $\mathbf{L}-\mathbf{N}$ dynamic adjustment set, exists since
there are two distinct $P$ and $P^{\prime }$ in $\mathcal{M(G)}$ such that, for $%
\pi (A\mid \mathbf{L})=I_{1}(A)$, $\sigma _{\pi ,\mathbf{Z}^{\ast
}}(P)<\sigma _{\pi ,\mathbf{Z}^{\ast \ast }}(P)$ and $\sigma _{\pi ,\mathbf{Z%
}^{\ast }}^{2}(P^{\prime })>\sigma _{\pi ,\mathbf{Z}^{\ast \ast
}}^{2}(P^{\prime })$. However, by parts 2) and 3) of Theorem \ref%
{theo:main_indep}, $\mathbf{O}_{min}=\mathbf{O}_{m}=\emptyset $ is the
optimal minimum and minimal $\mathbf{L}-\mathbf{N}$ dynamic adjustment set.
\end{example}

\begin{example}
\label{ex:only_min} In the DAG of Figure \ref{fig:only_min}, $\mathbf{L}%
=\{L\}$, $\mathbf{N}=\{A,Y,L,F\}$. Here $\mathbf{N}\neq \mathbf{V}$ and $\an%
_{\mathcal{G}}(\{A,Y\}\cup \mathbf{L})\not\subset \mathbf{N}$ and hence the
assumptions needed in part 1) of Theorem \ref{theo:main_indep} for $\mathbf{O%
}=\{L\}$ to be the globally optimal $\mathbf{L}-\mathbf{N}$ dynamic
adjustment set do not hold. However, using Proposition \ref{prop:compare_adj}%
, it is easy to show that $\{L,F\}$ is the globally optimal $\mathbf{L}-%
\mathbf{N}$ dynamic adjustment set. This examples proves that the conditions
in part 1) of Theorem \ref{theo:main_indep} \ are sufficient but not
necessary for the existence of a globally optimal $\mathbf{L}-\mathbf{N}$
dynamic adjustment set. 
\end{example}

\section{Polynomial time algorithms to compute $\mathbf{O}_{min}$ and $%
\mathbf{O}_{m}$}

\label{sec:algos}

In this section we provide polynomial time algorithms to compute $\mathbf{O}%
_{m}$ and $\mathbf{O}_{min}$. We assume the availability of the following
sub-routines:

\begin{enumerate}
\item \texttt{disjointPaths}($A,Y,\mathcal{H}$). Computes a maximal number
of inner vertex disjoint paths between $A$ and $Y$. This can be done in $%
\mathcal{O}\left[ \{\#\mathbf{V}(\mathcal{H})\}^{1/2}\#\mathbf{E}(\mathcal{H}%
)\right] $ time using a maximum flow algorithm. See Corollary 7.1.5 in \cite%
{graph-book}. By Manger's Theorem (see Theorem 7.1.4 of \cite{graph-book}),
a routine \texttt{minCut}($A,Y,\mathcal{H}$) that computes the size of the
minimum $A-Y$ cut in an undirected graph $\mathcal{H}$ can also be
implemented in $\mathcal{O}\left[ \{\#\mathbf{V}(\mathcal{H})\}^{1/2}\#%
\mathbf{E}(\mathcal{H})\right] $ time.

\item \texttt{testExistAdj}($A,Y,\mathbf{L},\mathbf{N},\mathcal{G}$).
Returns $\true$ if and only if there exists an adjustment set $\mathbf{Z}$
with respect to $A,Y$ in $\mathcal{G}$ that satisfies $\mathbf{L}\subset 
\mathbf{Z}\subset \mathbf{N}$. This can be done in $\mathcal{O}\{\#\mathbf{V}%
(\mathcal{G})+\#\mathbf{E}(\mathcal{G})\}$ time using the \texttt{FINDADJ}
routine developed in \cite{vanAI}.
\end{enumerate}

\begin{algorithm}[ht!]
	\SetKwInOut{Input}{input}\SetKwInOut{Output}{output}
    \SetAlgoLined\DontPrintSemicolon
    \SetKwFunction{findOptMin}{findOptMin}
    \SetKwFunction{findIndPaths}{findIndPaths}
    \SetKwFunction{isInMinimum}{isInMinimum}
    \SetKwFunction{minCut}{minCut}
    \SetKwProg{proc}{procedure}{}{}
	\Input{An undirected graph $\mathcal{H}$ with vertex set $\mathbf{V}$, two non-adjacent vertices $A,Y \in \mathbf{V}$ and $V \in \mathbf V \setminus \{ A, Y \}$.}
	\Output{Boolean. True if there exists a minimum $A-Y$ cut $\mathbf{Z}$ with $V \in \mathbf{Z}$.}
	\proc{\isInMinimum{$V,A,Y,\mathcal{H}$}}
	{           
	$\mathbf{E}^{\prime}=\mathbf{E}(\mathcal{H})\cup \lbrace \lbrace A,V\rbrace,\lbrace V,Y\rbrace \rbrace$.\\
	$\mathcal{H}^{\prime}=(\mathbf{V}(\mathcal{H}), \mathbf{E}^{\prime})$\\
	$m_{1}= \# \texttt{minCut}(A, Y, \mathcal{H}^{\prime})$\\
    $m_{2}= \# \texttt{minCut}(A, Y, \mathcal{H})$\\
    \If{ $m_1 = m_2$}{
        \Return{$\true$}
    }
    \Else{ \Return{$\false$}}
	} 
		\caption{{\bf Subroutine to determine if a vertex is a member of a minimum $A-Y$ cut} }
			\label{algo:isinmin}
\end{algorithm}

\begin{algorithm}[ht!]
	\SetKwInOut{Input}{input}\SetKwInOut{Output}{output}
    \SetAlgoLined\DontPrintSemicolon
    \SetKwFunction{findOptMinimum}{findOptMinimum}
    \SetKwFunction{findIndPaths}{findIndPaths}
    \SetKwFunction{isInMinimum}{isInMinimum}
    \SetKwFunction{testExistsAdj}{testExistsAdj}
    \SetKwProg{proc}{procedure}{}{}
	\Input{$\mathcal{G}$ a DAG with vertex set $\mathbf{V}$ and vertices $A,Y \in \mathbf{V}$ such that $A\in \an_{\mathcal{G}}(Y)$.  $(\mathbf{L, N})$ an admissible pair with respect to $A,Y$ in $\mathcal{G}$.}
	\Output{$\mathbf{O}_{m}$.}
	\proc{\findOptMinimum{$A,Y,\mathcal{G}, \mathbf{N}, \mathbf{L}$}}
	{           
	    \If{\testExistsAdj{$A, Y, \mathbf{L}, \mathbf{N},\mathcal{G}$}}
	    {
	    construct $\mathcal{H}^{1}$\\
	    $\pi_1, \pi_2, \ldots, \pi_m =$  \texttt{disjointPaths}$(A,Y,\mathcal{H}^{1})$ \\
	    $\out = \emptyset$\\
	    \For{$i = 1, 2, \ldots, m$}{
	    $A - V_1 - V_2 - \ldots - V_{k_i} - Y = \pi_i$ \\
	        \For{  $ j = k_i, \ldots, 1$ }
	        { 
	            \If{ \isInMinimum{$V_j, \mathcal H^1$} }{
	               $\out = \out \cup \lbrace V_j \rbrace$ \\
	               \texttt{break}
	            }
	        }
	    }}
	    \Else{
	    $\out = \star$\\
	    }
	 \Return{$\out$}
	} 
		\caption{{\bf Algorithm to compute $\mathbf{O}_{m}$} }
			\label{algo:find_opt_min}
\end{algorithm}\FloatBarrier

\begin{algorithm}[ht!]
	\SetKwInOut{Input}{input}\SetKwInOut{Output}{output}
    \SetAlgoLined\DontPrintSemicolon
    \SetKwFunction{findOptMinimal}{findOptMinimal}
    \SetKwFunction{findIndPaths}{findIndPaths}
    \SetKwFunction{isInMinimum}{isInMinimum}
    \SetKwFunction{testExistsAdj}{testExistsAdj}
    \SetKwProg{proc}{procedure}{}{}
	\Input{$\mathcal{G}$ a DAG with vertex set $\mathbf{V}$ and vertices $A,Y \in \mathbf{V}$ such that $A\in \an_{\mathcal{G}}(Y)$.  $(\mathbf{L, N})$ an admissible pair with respect to $A,Y$ in $\mathcal{G}$.}
	\Output{$\mathbf{O}_{min}$.}
	\proc{\findOptMinimal{$A,Y,\mathcal{G}, \mathbf{N}, \mathbf{L}$}}
	{           
	    \If{\testExistsAdj{$A, Y, \mathbf{L}, \mathbf{N},\mathcal{G}$}}
	    {
	    construct $\mathcal{H}^{1}$\\
	    nb = $\nb_{\mathcal{H}^{1}}(Y)$\\
	    $\out = \emptyset$\\
   	    $\stack = \emptyset$\\
		$\visited = \emptyset$\\
		$\stack.$push($A$)\\
		\While{$\stack \neq \emptyset$}{
		$V=\stack.$pop()\\
		\uIf{$V\in \nb$ and $V\not\in \visited$}{
		$\out=\out \cup \lbrace V\rbrace $\\	
		$\visited=\visited \cup \lbrace V\rbrace $
		} \uElseIf{$V\not\in \visited$}
		{
		  $\visited = \visited \cup \lbrace V \rbrace$ \\
		  $\stack.$push($\nb_{\mathcal{H}^{1}}(V)$) \\
		}
		
		}
	    }
	    \Else{
	    $\out = \star$\\
	    }
	 \Return{$\out$}
	} 
		\caption{{\bf Algorithm to compute $\mathbf{O}_{min}$} }
			\label{algo:find_opt_minimal}
\end{algorithm}\FloatBarrier

In the Supplementary Material we prove the following results.

\begin{lemma}
\label{lemma:isinmin_correct} Algorithm \ref{algo:isinmin} outputs $\true$
if and only if there exists a minimum $A-Y$ cut $\mathbf{Z}$ in $\mathcal{H}$
with $V\in \mathbf{Z}$.
\end{lemma}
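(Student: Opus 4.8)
The plan is to reduce the claim to the classical fact that a vertex $V$ lies on some minimum $A$--$Y$ cut of an undirected graph $\mathcal{H}$ if and only if forcing $V$ into every $A$--$Y$ cut (by making $V$ adjacent to both $A$ and $Y$) does not increase the size of the minimum cut. Algorithm~\ref{algo:isinmin} computes exactly $m_2=\#\texttt{minCut}(A,Y,\mathcal{H})$ and $m_1=\#\texttt{minCut}(A,Y,\mathcal{H}')$ where $\mathcal{H}'$ adds the edges $\{A,V\}$ and $\{V,Y\}$, and returns $\true$ iff $m_1=m_2$. So the entire content is to show: there is a minimum $A$--$Y$ cut of $\mathcal{H}$ containing $V$ if and only if $m_1=m_2$.

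First I would record the easy direction. Since $\mathcal{H}'$ is obtained from $\mathcal{H}$ only by adding edges, every $A$--$Y$ cut of $\mathcal{H}'$ is also an $A$--$Y$ cut of $\mathcal{H}$, hence $m_1\ge m_2$ always. For the ``only if'' direction, suppose $\mathbf{Z}$ is a minimum $A$--$Y$ cut of $\mathcal{H}$ with $V\in\mathbf{Z}$ (note $V\notin\{A,Y\}$ by hypothesis, so this is well defined). Any $A$--$Y$ path in $\mathcal{H}'$ either uses one of the two new edges, in which case it passes through $V\in\mathbf{Z}$, or uses only edges of $\mathcal{H}$, in which case it is blocked by $\mathbf{Z}$ since $\mathbf{Z}$ is an $A$--$Y$ cut of $\mathcal{H}$; thus $\mathbf{Z}$ is an $A$--$Y$ cut of $\mathcal{H}'$, giving $m_1\le\#\mathbf{Z}=m_2$, and with $m_1\ge m_2$ we get $m_1=m_2$.

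For the converse, assume $m_1=m_2$ and let $\mathbf{Z}'$ be a minimum $A$--$Y$ cut of $\mathcal{H}'$, so $\#\mathbf{Z}'=m_1=m_2$. As noted, $\mathbf{Z}'$ is an $A$--$Y$ cut of $\mathcal{H}$, and it has cardinality $m_2$, so it is a minimum $A$--$Y$ cut of $\mathcal{H}$. It remains to show $V\in\mathbf{Z}'$. If $V\notin\mathbf{Z}'$, then in $\mathcal{H}'$ the path $(A,V,Y)$ using the two added edges avoids $\mathbf{Z}'$ entirely (since $A,Y\notin\mathbf{Z}'$ and $V\notin\mathbf{Z}'$), contradicting that $\mathbf{Z}'$ is an $A$--$Y$ cut of $\mathcal{H}'$; hence $V\in\mathbf{Z}'$, and $\mathbf{Z}'$ is the desired minimum $A$--$Y$ cut of $\mathcal{H}$ containing $V$. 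Finally I would observe that $m_1$ and $m_2$ are well defined: since $A$ and $Y$ are non-adjacent in $\mathcal{H}$, the vertex set $\mathbf{V}\setminus\{A,Y\}$ is an $A$--$Y$ cut of both $\mathcal{H}$ and $\mathcal{H}'$, so the minimum cuts exist and \texttt{minCut} terminates with a finite value.

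I do not anticipate a real obstacle here; the only point requiring minor care is that the two added edges of $\mathcal{H}'$ are handled correctly in the path-blocking argument—specifically that an $A$--$Y$ path in $\mathcal{H}'$ through a new edge necessarily visits $V$, and that $V\notin\{A,Y\}$ so that ``$V\in\mathbf{Z}$'' is consistent with the definition of a cut—both of which are immediate from the hypotheses.
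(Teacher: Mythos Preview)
Your proof is correct and follows essentially the same approach as the paper's: both directions rely on the observation that $\mathcal{H}$ is a subgraph of $\mathcal{H}'$ (so $m_1\ge m_2$ and any $A$--$Y$ cut of $\mathcal{H}'$ is one of $\mathcal{H}$), together with the fact that the path $(A,V,Y)$ in $\mathcal{H}'$ forces $V$ into every $A$--$Y$ cut of $\mathcal{H}'$. Your write-up is slightly more explicit than the paper's in justifying why a minimum cut $\mathbf{Z}$ of $\mathcal{H}$ containing $V$ remains a cut in $\mathcal{H}'$, but the argument is the same.
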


\begin{proposition}
\label{prop:algo_main_correct} Assume that $(\mathbf{L,N})$ is an admissible
pair with respect to $A,Y$ in $\mathcal{G}$. Then, the output of Algorithm %
\ref{algo:find_opt_min} is equal to $\mathbf{O}_{m}$. Furthermore, the
complexity of Algorithm is $\mathcal{O}\left[ \left\{ \#\mathbf{V}(\mathcal{G%
})\right\} ^{7/2}\right] $ .
\end{proposition}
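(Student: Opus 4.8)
The plan is to prove the two assertions of Proposition~\ref{prop:algo_main_correct} in turn. Write $n=\#\mathbf{V}(\mathcal{G})$, and recall $\#\mathbf{E}(\mathcal{G})=\mathcal{O}(n^2)$, $\#\mathbf{V}(\mathcal{H}^1)\le n$ and $\#\mathbf{E}(\mathcal{H}^1)=\mathcal{O}(n^2)$. Since $(\mathbf{L},\mathbf{N})$ is admissible, \texttt{testExistAdj} returns $\true$, so the algorithm constructs $\mathcal{H}^1$ and a maximal family $\pi_1,\dots,\pi_m$ of internally vertex-disjoint $A-Y$ paths in $\mathcal{H}^1$; by Menger's theorem \cite{graph-book}, $m$ is the cardinality of every minimum $A-Y$ cut in $\mathcal{H}^1$. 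Combining Proposition~\ref{prop:equiv_dyn}, Proposition~\ref{prop:inf_minimal} and \cite{lattice}, the collection of minimum $A-Y$ cuts in $\mathcal{H}^1$ forms a finite lattice under $\unlhd_{\mathcal{H}^1}$ whose meet is $\mathbf{Z}_1\wedge_{\mathcal{H}^1}\mathbf{Z}_2=\partial_{\mathcal{H}^1}\{\cc(\mathbf{Z}_1\cup\mathbf{Z}_2,Y,\mathcal{H}^1)\}$; hence $\mathbf{O}_m$ is well defined independently of the order of the meets, is itself a minimum $A-Y$ cut, is the bottom of this lattice, and therefore satisfies $\mathbf{O}_m\wedge_{\mathcal{H}^1}\mathbf{Z}=\mathbf{O}_m$ for every minimum $A-Y$ cut $\mathbf{Z}$. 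I also record the elementary fact that, because any $A-Y$ cut meets each $\pi_i$ in at least one (necessarily internal) vertex and the $\pi_i$ are internally disjoint, a minimum cut $\mathbf{Z}$, having cardinality $m$, contains exactly one vertex of each $\pi_i$, and every vertex of $\mathbf{Z}$ lies on exactly one $\pi_i$.

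The crux is to show that, for each $i$, the vertex appended while processing $\pi_i$ is the unique vertex $V_i^\ast$ of $\mathbf{O}_m$ on $\pi_i$. By Lemma~\ref{lemma:isinmin_correct}, \texttt{isInMinimum} correctly decides membership in some minimum $A-Y$ cut; since $V_i^\ast\in\mathbf{O}_m$, the inner loop does append the vertex of $\pi_i$ closest to $Y$ that lies in some minimum cut, so it suffices to rule out that a vertex $W$ strictly between $V_i^\ast$ and $Y$ along $\pi_i$ lies in a minimum cut $\mathbf{Z}$. Assume it does. By the fact above, $W$ is the only vertex of $\mathbf{Z}$ on $\pi_i$ and $V_i^\ast$ the only vertex of $\mathbf{O}_m$ on $\pi_i$, so the portion of $\pi_i$ strictly between $W$ and $Y$ avoids $\mathbf{O}_m\cup\mathbf{Z}$ and hence lies in $\cc(\mathbf{O}_m\cup\mathbf{Z},Y,\mathcal{H}^1)$; consequently $W$, which is removed because it is in $\mathbf{Z}$, is adjacent to that component (directly to $Y$ if $W$ is the neighbour of $Y$ on $\pi_i$), so $W\in\partial_{\mathcal{H}^1}\{\cc(\mathbf{O}_m\cup\mathbf{Z},Y,\mathcal{H}^1)\}=\mathbf{O}_m\wedge_{\mathcal{H}^1}\mathbf{Z}=\mathbf{O}_m$, contradicting that $V_i^\ast\ne W$ is the sole vertex of $\mathbf{O}_m$ on $\pi_i$. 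Thus the algorithm appends exactly $V_1^\ast,\dots,V_m^\ast$; these are distinct since the $\pi_i$ are internally disjoint, so $\out$ is a subset of $\mathbf{O}_m$ of cardinality $m=\#\mathbf{O}_m$, i.e. $\out=\mathbf{O}_m$. If $m=0$, i.e. $A$ and $Y$ are disconnected in $\mathcal{H}^1$, the loop is vacuous, $\out=\emptyset$, and $\emptyset$ is the unique minimum cut, so again $\out=\mathbf{O}_m$.

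For the running time, \texttt{testExistAdj} costs $\mathcal{O}(n+\#\mathbf{E}(\mathcal{G}))=\mathcal{O}(n^2)$ by \cite{vanAI}, and constructing $\mathcal{H}^1$ (ancestor sets, the proper back-door graph, moralization, deletion of $\ignore(A,Y,\mathbf{L},\mathbf{N},\mathcal{G})$ with the accompanying connectivity-based edge insertions, and the edges added in step~3 of Definition~\ref{def:H}) is $\mathcal{O}(n^3)$. The call \texttt{disjointPaths}$(A,Y,\mathcal{H}^1)$ costs $\mathcal{O}(\{\#\mathbf{V}(\mathcal{H}^1)\}^{1/2}\#\mathbf{E}(\mathcal{H}^1))=\mathcal{O}(n^{5/2})$. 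Each call to \texttt{isInMinimum} builds $\mathcal{H}'$ in $\mathcal{O}(n^2)$ and performs two \texttt{minCut} computations, each $\mathcal{O}(n^{1/2}\cdot n^2)=\mathcal{O}(n^{5/2})$, hence $\mathcal{O}(n^{5/2})$ per call. The key observation is that the inner \textbf{for} loop, summed over all $i$, iterates at most once per internal vertex of $\mathcal{H}^1$, because the $\pi_i$ share no internal vertices, so there are $\mathcal{O}(n)$ calls to \texttt{isInMinimum} in total, costing $\mathcal{O}(n)\cdot\mathcal{O}(n^{5/2})=\mathcal{O}(n^{7/2})$. This term dominates all others, yielding the claimed bound $\mathcal{O}(n^{7/2})$.

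I expect the step requiring the most care to be the exchange argument in the second paragraph identifying the $Y$-nearest minimum-cut vertex on $\pi_i$ with the vertex of $\mathbf{O}_m$ on $\pi_i$: it rests on the explicit meet formula from \cite{lattice} together with $\mathbf{O}_m$ being the lattice bottom, and on correctly handling the boundary case in which the candidate vertex $W$ is the neighbour of $Y$ on $\pi_i$. The remaining bookkeeping for correctness (that each path contributes exactly one vertex and that these vertices exhaust $\mathbf{O}_m$) and the complexity accounting become routine once the internal disjointness of the $\pi_i$ is exploited.
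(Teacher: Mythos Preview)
Your proof is correct and follows essentially the same approach as the paper: both identify the vertex added on path $\pi_i$ as the $Y$-closest internal vertex of $\pi_i$ lying in some minimum cut, and both bound the running time by the $\mathcal{O}(n)$ calls to \texttt{isInMinimum} at $\mathcal{O}(n^{5/2})$ each.

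The main difference is one of rigor. The paper simply asserts that ``the definition of $\mathbf{O}_m$ implies'' that the unique vertex of $\mathbf{O}_m$ on $\pi_i$ is the $Y$-closest vertex belonging to some minimum cut, without argument. You actually prove this via the explicit meet formula $\mathbf{O}_m\wedge_{\mathcal{H}^1}\mathbf{Z}=\partial_{\mathcal{H}^1}\{\cc(\mathbf{O}_m\cup\mathbf{Z},Y,\mathcal{H}^1)\}=\mathbf{O}_m$, which is a genuine improvement in completeness. Your complexity bookkeeping is also slightly more careful: you give $\mathcal{O}(n^3)$ for constructing $\mathcal{H}^1$ (the paper claims $\mathcal{O}(n^2)$, which is perhaps optimistic given step~2 of Definition~\ref{def:H}), but either way the $\mathcal{O}(n^{7/2})$ term dominates, so the final bound is unaffected.
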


\begin{proposition}
\label{prop:algo_minimal}Assume that $(\mathbf{L,N})$ is an admissible pair
with respect to $A,Y$ in $\mathcal{G}$. Then, the output of Algorithm \ref%
{algo:find_opt_minimal} is equal to $\mathbf{O}_{min}$. Furthermore, the
complexity of Algorithm is $\mathcal{O}\left[ \left\{ \#\mathbf{V}\left( 
\mathcal{G}\right) \right\} ^{2}\right].$
\end{proposition}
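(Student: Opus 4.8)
\textbf{Plan and reformulation of $\mathbf{O}_{min}$.}
The plan is to split the proof into a combinatorial reformulation of $\mathbf{O}_{min}$, a correctness argument for the traversal in Algorithm~\ref{algo:find_opt_minimal}, and a running-time count. By the definition in Section~\ref{sec:new_graph}, $\mathbf{O}_{min}=\partial_{\mathcal{H}^1}\mathbf{C}$ where $\mathbf{C}\equiv\cc(\nb_{\mathcal{H}^1}(Y),A,\mathcal{H}^1)$ is the connected component of $A$ in $\mathcal{H}^1$ after deleting the vertices $\nb_{\mathcal{H}^1}(Y)$; this is well defined because $A\notin\nb_{\mathcal{H}^1}(Y)$ by part 1 of Proposition~\ref{prop:equiv_dyn}. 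The first step is to establish
\[
\mathbf{O}_{min}=\bigl\{\,V\in\nb_{\mathcal{H}^1}(Y)\ :\ \text{some path from }A\text{ to }V\text{ in }\mathcal{H}^1\text{ has all internal vertices outside }\nb_{\mathcal{H}^1}(Y)\,\bigr\}.
\]
For the inclusion ``$\subseteq$'': any $W\in\partial_{\mathcal{H}^1}\mathbf{C}$ is adjacent to some $c\in\mathbf{C}$ and is not in $\mathbf{C}$; if $W$ were outside $\nb_{\mathcal{H}^1}(Y)$ it would lie in the same component of $\mathcal{H}^1\setminus\nb_{\mathcal{H}^1}(Y)$ as $c$, i.e.\ in $\mathbf{C}$, a contradiction, so $W\in\nb_{\mathcal{H}^1}(Y)$, and appending the edge $c-W$ to a path inside $\mathbf{C}$ from $A$ to $c$ produces a path of the required form. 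For ``$\supseteq$'': given $V\in\nb_{\mathcal{H}^1}(Y)$ and a path $A=V_0-\cdots-V_k=V$ with $V_1,\dots,V_{k-1}\notin\nb_{\mathcal{H}^1}(Y)$, the vertices $V_0,\dots,V_{k-1}$ are connected in $\mathcal{H}^1\setminus\nb_{\mathcal{H}^1}(Y)$, hence all lie in $\mathbf{C}$, while $V_k=V\notin\mathbf{C}$, so $V\in\partial_{\mathcal{H}^1}\mathbf{C}$.

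\textbf{Correctness of the traversal.}
Once \texttt{testExistsAdj} confirms admissibility (which holds by hypothesis), the main branch of Algorithm~\ref{algo:find_opt_minimal} runs a stack traversal of $\mathcal{H}^1$ from $A$ governed by the rule: a vertex in $\nb_{\mathcal{H}^1}(Y)$, when first popped, is recorded in $\out$ and never expanded; any other vertex is expanded once, i.e.\ all its neighbours are pushed and it is marked $\visited$ at that moment. I would prove, by induction on the order in which vertices are pushed, (a) every pushed vertex is joined to $A$ by a path all of whose vertices except possibly itself lie outside $\nb_{\mathcal{H}^1}(Y)$, and (b) conversely every such vertex is eventually pushed: for a witnessing path $A=V_0-\cdots-V_k$, each $V_i$ with $i<k$ lies outside $\nb_{\mathcal{H}^1}(Y)$, so the first time it is popped it is expanded, or was expanded earlier, which for a vertex outside $\nb_{\mathcal{H}^1}(Y)$ is the only way it becomes $\visited$; either way $V_{i+1}$ gets pushed. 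Since a vertex of $\nb_{\mathcal{H}^1}(Y)$ is added to $\out$ exactly once, namely the first time it is popped while not $\visited$, (a) and (b) together show that at termination $\out$ equals the set displayed above, i.e.\ $\mathbf{O}_{min}$. (The traversal never pushes $Y$, since $Y$ could only be pushed while expanding one of its neighbours and those are never expanded; it halts because each vertex is marked $\visited$ at most once and the total number of pushes is bounded by $\sum_{V}\deg_{\mathcal{H}^1}(V)$.)

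\textbf{Running time.}
The call to \texttt{testExistsAdj} costs $\mathcal{O}\{\#\mathbf{V}(\mathcal{G})+\#\mathbf{E}(\mathcal{G})\}$; constructing $\mathcal{H}^1$ (moralizing a subgraph of the proper back-door graph of $\mathcal{G}_{\an_{\mathcal{G}}(\{A,Y\}\cup\mathbf{L})}$, contracting away the vertices in $\ignore$ as in steps 1--2 of Definition~\ref{def:H}, and adding the $\mathbf{L}$-edges in step 3) can be carried out in $\mathcal{O}\bigl[\{\#\mathbf{V}(\mathcal{G})\}^2\bigr]$ time with an adjacency-matrix representation, and the resulting graph has at most $\binom{\#\mathbf{V}(\mathcal{G})}{2}$ edges; computing $\nb_{\mathcal{H}^1}(Y)$ is linear; and the traversal touches each vertex at most once and each edge a bounded number of times, hence runs in $\mathcal{O}\{\#\mathbf{V}(\mathcal{H}^1)+\#\mathbf{E}(\mathcal{H}^1)\}=\mathcal{O}\bigl[\{\#\mathbf{V}(\mathcal{G})\}^2\bigr]$. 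Summing these contributions gives the stated bound.

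\textbf{Main obstacle.}
The delicate part is the correctness argument: isolating the right loop invariant for the ``stop at $\nb_{\mathcal{H}^1}(Y)$'' traversal and checking that the interplay of $\stack$, $\visited$ and $\out$ realizes precisely the boundary-of-component set --- in particular that a neighbour of $Y$ reachable only through other neighbours of $Y$ is excluded, while one reachable through non-neighbours is recorded exactly once irrespective of the depth-first exploration order. By contrast, the reformulation of $\mathbf{O}_{min}$ in the first step and the running-time accounting are comparatively routine.
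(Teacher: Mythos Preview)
Your proposal is correct and follows essentially the same approach as the paper's proof: both establish the loop invariant that every vertex reached by the traversal is connected to $A$ by a path avoiding $\nb_{\mathcal{H}^1}(Y)$ (except possibly at its endpoint), and then argue the reverse inclusion by walking along a witnessing path. The only noteworthy difference is that you make explicit and prove the combinatorial reformulation $\mathbf{O}_{min}=\{V\in\nb_{\mathcal{H}^1}(Y):\text{$A$ reaches $V$ avoiding other neighbours of $Y$}\}$, which the paper states in the main text but does not prove in the supplement; this addition is a modest improvement in completeness.
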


Algorithm \ref{algo:find_opt_minimal} is a simple modification of the depth
first search algorithm (see Section 8.2 of \cite{graph-book}). Its
complexity is dominated by the complexity of constructing $\mathcal{H}^{1}$,
which is $\mathcal{O}\left[ \left\{ \#\mathbf{V}\left( \mathcal{G}\right)
\right\} ^{2}\right]$.

\section{Discussion}

\label{sec:discussion}

In this paper we have shown that for $(\mathbf{L},\mathbf{N})$ an admissible
pair, a globally optimal $\mathbf{L}-\mathbf{N}$ dynamic adjustment set
exists when $\mathbf{N}\subset\an_{\mathcal{G}}(\lbrace A,Y\rbrace \cup 
\mathbf{L})$. We also noted that there are graphs that admit an $\mathbf{L}-%
\mathbf{N}$ dynamic adjustment set but with no globally optimal one.
However, in Example \ref{ex:only_min} we exhibited a graph such that $%
\mathbf{N}\neq \mathbf{V}$ and $\mathbf{N}\not \subset\an_{\mathcal{G}%
}(\lbrace A,Y\rbrace \cup \mathbf{L})$, but an optimal $\mathbf{L}-\mathbf{N}
$ dynamic adjustment set does exist. A complete characterization of the full
class of graphs under which a globally optimal $\mathbf{L}-\mathbf{N}$
dynamic adjustment set exists remains an open problem.

The results in this paper are for point interventions, that is, for
interventions on a single treatment vertex $A$. For multiple time dependent
static interventions, \cite{eff-adj} introduced the notion of a time
dependent adjustment set and provided graphical criteria to compare two time
dependent adjustment sets. They also showed that there exist graphs without
hidden variables in which no optimal time dependent adjustment set exists.
The extension of these results to general dynamic treatment regimes and
graphs with hidden variables is an interesting problem that warrants further
research.

Another related line of research is the derivation of semiparametric
efficient estimators of the policy value of a static point intervention in
graphical models. Unlike the non-parametric estimators considered in this
paper, semiparametric efficient estimators exploit all the information
encoded in the assumed causal graphical model. \cite{eff-adj} propose a
semiparametric efficient estimator for static point interventions and DAGs
without hidden variables. \cite{rohit} derived the semiparametric efficient
influence function of the policy value in special classes of DAGs with
hidden variables. The derivation of the semiparametric efficient influence
function of the policy value of a static or dynamic regime in an arbitrary
DAG with hidden variables in which the policy value is identified remains an
important open problem.

\section{Supplementary Material}

\subsection{Proofs of results in Section \protect\ref{sec:back}}

\begin{proof}[Proof of Proposition \ref{prop:equiv_dtr}] 

\medskip

1)
Assume first that $\mathbf{Z}$ is an $\mathbf{L}-\mathbf{N}$ dynamic adjustment set with respect to $A,Y$ in $\mathcal{G}$. Let $y\in\mathbb{R}$ and $a \in\mathcal{A}$. Taking $\pi(A\mid \mathbf{L})=I_{a}(A)$, it follows from the definition of $\mathbf{L}-\mathbf{N}$ dynamic adjustment set  that
\begin{equation}
E_{P}\left[ E_{P}\left\lbrace I_{(-\infty, y]}(Y)|A=a,\mathbf{L},\pa_{\mathcal{G}}\left( A\right) \right\rbrace
\right] =E_{P} \left[ E_{P}\left\lbrace I_{(-\infty, y]}(Y)|A=a,\mathbf{Z}\right\rbrace \right].
\label{eq:adj_dtr_1}
\end{equation} 
Note that
$$
E_{P}\left[ E_{P}\left\lbrace I_{(-\infty, y]}(Y)|A=a,\mathbf{L},\pa_{\mathcal{G}}\left( A\right) \right\rbrace %
\right] = E_{P}\left[ \frac{ I_{(-\infty, y]}(Y) I_{a}(A)}{f\left\lbrace A=a\mid \mathbf{L}, \pa_{\mathcal{G}}(A) \right\rbrace} \right].
$$
Since by assumption $\mathbf{L} \subset \nd_{\mathcal{G}}(A)$, by the Local Markov Property it holds that $A \ort \mathbf{L} \mid \pa_{\mathcal{G}}(A)$ under all $P \in \mathcal{M(G)}$. This implies
$$
f \left\lbrace A=a\mid \mathbf{L}, \pa_{\mathcal{G}}(A) \right\rbrace = f\left\lbrace A=a\mid  \pa_{\mathcal{G}}(A) \right\rbrace
$$
and hence
\begin{equation}
E_{P}\left[ E_{P}\left\lbrace I_{(-\infty, y]}(Y)|A=a,\mathbf{L},\pa_{\mathcal{G}}\left( A\right) \right\rbrace %
\right] = E_{P}\left[ \frac{ I_{(-\infty, y]}(Y) I_{a}(A)}{f\left\lbrace A=a\mid \pa_{\mathcal{G}}(A) \right\rbrace} \right]=E_{P}\left[ E_{P}\left\lbrace I_{(-\infty, y]}(Y)|A=a,\pa_{\mathcal{G}}\left( A\right) \right\rbrace %
\right] .
\label{eq:adj_dtr_2}
\end{equation}
Now \eqref{eq:adj_dtr_1}, \eqref{eq:adj_dtr_2} and the fact that by assumption $\mathbf{L}\subset \mathbf{Z} \subset \mathbf{N}$ imply that $\mathbf{Z}$ is an $\mathbf{L}-\mathbf{N}$ static adjustment set with respect to $A,Y$ in $\mathcal{G}$.

Assume now that $\mathbf{Z}$ is an $\mathbf{L}-\mathbf{N}$ static adjustment set with respect to $A,Y$ in $\mathcal{G}$. Take $y\in\mathbb{R}$.
We have to show that 
\begin{align*}
E_{P}\left( E_{\pi^{\ast}_{\mathbf{Z}}}\left[ E_{P}\left\lbrace I_{(-\infty,y]}(Y) \mid A, \mathbf{Z}\right\rbrace \mid \mathbf{Z}\right] \right)=E_{P}\left( E_{\pi^{\ast}}\left[ E_{P}\left\lbrace I_{(-\infty,y]}(Y) \mid A, \pa_{\mathcal{G}}(A),\mathbf{L}\right\rbrace \mid \pa_{\mathcal{G}}(A),\mathbf{L} \right] \right).
\end{align*}
For $a \in \mathcal{A}$ define
\begin{align*}
&B(a, \mathbf{Z})= E_{P}\left\lbrace I_{(-\infty,y]}(Y) \mid A=a, \mathbf{Z}\right\rbrace,
\\
&
\widetilde{B}(a, \pa_{\mathcal{G}}(A),\mathbf{L})= E_{P}\left\lbrace I_{(-\infty,y]}(Y) \mid A=a, \pa_{\mathcal{G}}(A),\mathbf{L}\right\rbrace.
\end{align*}
Note that
\begin{align*}
E_{P}\left( E_{\pi^{\ast}_{\mathbf{Z}}}\left[ E_{P}\left\lbrace I_{(-\infty,y]}(Y) \mid A, \mathbf{Z}\right\rbrace \mid \mathbf{Z}\right] \right) &=\sum\limits_{a\in\mathcal{A}} E_{P}\left[  \pi(a\mid \mathbf{L})  E_{P}\left\lbrace I_{(-\infty,y]}(Y) \mid A=a, \mathbf{Z}\right\rbrace \right]
\\
&
=
\sum\limits_{a\in\mathcal{A}} E_{P}\left[  \pi(a\mid \mathbf{L}) B(a, \mathbf{Z})   \right]
\\
&
=
\sum\limits_{a\in\mathcal{A}} E_{P}\left[  \pi(a\mid \mathbf{L}) E_{P}\left\lbrace B(a, \mathbf{Z})\mid \mathbf{L}\right\rbrace   \right]
.
\end{align*}
and
\begin{align*}
E_{P}\left( E_{\pi^{\ast}}\left[ E_{P}\left\lbrace I_{(-\infty,y]}(Y) \mid A, \pa_{\mathcal{G}}(A),\mathbf{L}\right\rbrace \mid \pa_{\mathcal{G}}(A),\mathbf{L} \right] \right) &=\sum\limits_{a\in\mathcal{A}} E_{P}\left[  \pi(a\mid \mathbf{L})  E_{P}\left\lbrace I_{(-\infty,y]}(Y) \mid A=a, \pa_{\mathcal{G}}(A),\mathbf{L}\right\rbrace \right]
\\
&
=
\sum\limits_{a\in\mathcal{A}} E_{P}\left[  \pi(a\mid \mathbf{L}) \widetilde{B}(a,  \pa_{\mathcal{G}}(A),\mathbf{L})   \right]
\\
&
=
\sum\limits_{a\in\mathcal{A}} E_{P}\left[  \pi(a\mid \mathbf{L}) E_{P}\left\lbrace \widetilde{B}(a,  \pa_{\mathcal{G}}(A),\mathbf{L}) \mid \mathbf{L}\right\rbrace  \right]
.
\end{align*}
Thus, to prove this part of the proposition, it suffices to show that
$$
E_{P}\left\lbrace \widetilde{B}(a,  \pa_{\mathcal{G}}(A),\mathbf{L}) \mid \mathbf{L}\right\rbrace=E_{P}\left\lbrace \widetilde{B}(a,  \pa_{\mathcal{G}}(A),\mathbf{L}) \mid \mathbf{L}\right\rbrace .
$$

Now, since $\pa_{\mathcal{G}}(A)\cup \mathbf{L}$ satisfies the back-door criterion, 
and since by assumption $\mathbf{Z}$ is an  $\mathbf{L}-\mathbf{N}$ static adjustment set, Corollary 2 of \cite{shpitser-adjustment} implies that
$$
\widetilde{B}(a,  \pa_{\mathcal{G}}(A),\mathbf{L}) = 
E\left\lbrace I_{(-\infty,y]}(Y_{a}) \mid \pa_{\mathcal{G}}(A),\mathbf{L} \right\rbrace
$$
and
$$
B(a,  \mathbf{Z}) = 
E\left\lbrace I_{(-\infty,y]}(Y_{a}) \mid \mathbf{Z} \right\rbrace,
$$
where $Y_{a}$ is a random variable with distribution equal to the marginal law of $Y$ when the vector $\mathbf{V}$ has a distribution given by
\begin{equation}
f_{a}\left( \mathbf{v}\right) = \delta_{a}(\mathbf{v}) \prod\limits_{V_{j}\in 
\mathbf{V}\setminus \{A\}}f\left\{ v_{j}\mid \pa_{\mathcal{G}%
}(v_{j})\right\} ,  \nonumber
\end{equation}%
with $\delta_{a}(\mathbf{v})$ being the indicator function that the coordinate of $\mathbf{v}$ corresponding to $A$ is equal to $a$.

Thus
$$
E\left\lbrace \widetilde{B}(a,  \pa_{\mathcal{G}}(A),\mathbf{L}) \mid \mathbf{L}\right\rbrace = E\left[ E\left\lbrace I_{(-\infty,y]}(Y_{a}) \mid \pa_{\mathcal{G}}(A),\mathbf{L} \right\rbrace \mid \mathbf{L}\right]= E\left\lbrace I_{(-\infty,y]}(Y_{a})  \mid \mathbf{L}\right\rbrace
$$
and, since $\mathbf{L}\subset\mathbf{Z}$,
$$
E\left\lbrace B(a,  \mathbf{Z}) \mid \mathbf{L}\right\rbrace = E\left[ E\left\lbrace I_{(-\infty,y]}(Y_{a}) \mid \mathbf{Z} \right\rbrace \mid \mathbf{L}\right]= E\left\lbrace I_{(-\infty,y]}(Y_{a})  \mid \mathbf{L}\right\rbrace.
$$
Hence
$$
E\left\lbrace B(a,  \mathbf{Z}) \mid \mathbf{L}\right\rbrace = E\left\lbrace \widetilde{B}(a,  \pa_{\mathcal{G}}(A),\mathbf{L}) \mid \mathbf{L}\right\rbrace.
$$
We conclude that $\mathbf{Z}$ is an $\mathbf{L}-\mathbf{N}$ dynamic adjustment set.

2) It suffices to show that if  $\mathbf{Z}$  is an $\mathbf{L}-\mathbf{N}$ dynamic adjustment set with respect to $A,Y$ in $\mathcal{G}$ then $\mathbf{Z} \cap \an_{\mathcal{G}}(\lbrace A,Y\rbrace \cup \mathbf{L})$ is an $\mathbf{L}-\mathbf{N}$ dynamic adjustment set too. 
Take $\mathbf{Z}$  an $\mathbf{L}-\mathbf{N}$ dynamic adjustment set.
By part 1), $\mathbf{Z}$ 
is an $\mathbf{L}-\mathbf{N}$ adjustment set. Then, by Theorem \ref{theo:van}, $Y \ort_{\mathcal{G}^{pbd}(A,Y)} A \mid \mathbf{Z}$ and $\mathbf{Z}\cap \forb(A,Y,\mathcal{G})=\emptyset$. By Lemma 1 from \cite{vanAI}, $Y \ort_{\mathcal{G}^{pbd}(A,Y)} A \mid \mathbf{Z}\cap  \an_{\mathcal{G}}(\lbrace A,Y\rbrace \cup \mathbf{L})$.  Using Theorem \ref{theo:van} again, we obtain that $\mathbf{Z}\cap  \an_{\mathcal{G}}(\lbrace A,Y\rbrace \cup \mathbf{L})$ is  an $\mathbf{L}-\mathbf{N}$ adjustment set.  Then part 1) of this proposition implies that 
$\mathbf{Z}\cap  \an_{\mathcal{G}}(\lbrace A,Y\rbrace \cup \mathbf{L})$ is  an $\mathbf{L}-\mathbf{N}$ dynamic adjustment set with respect to $A,Y$ in $\mathcal{G}$.

Finally, note that parts 3) and 4) follow immediately from part 1). This finishes the proof of the proposition.
\end{proof}

\subsection{Proofs of results in Section \protect\ref{sec:compare}}

To prove Lemmas \ref{lemma:supplementation} and \ref{lemma:deletion} we will
use the fact that for any $\mathbf{L}-\mathbf{N}$ dynamic adjustment set $%
\mathbf{Z}$ it holds that 
\begin{equation}
\psi _{P,\pi}\left( \mathbf{Z};\mathcal{G}\right) = \sum\limits_{a\in%
\mathcal{A}} \psi _{P,\pi,a}\left( \mathbf{Z};\mathcal{G}\right)
\label{eq:decomp_if}
\end{equation}
where 
\begin{equation*}
\psi _{P,\pi,a}\left( \mathbf{Z};\mathcal{G}\right)= I_{a}(A)\frac{\pi(a
\mid \mathbf{L})}{f(a\mid\mathbf{Z})}\lbrace Y- b(a,\mathbf{Z};P)\rbrace +
\pi(a\mid\mathbf{L}) b(a,\mathbf{Z};P) -E_{P}\left\lbrace \pi(a \mid \mathbf{%
L}) b(a,\mathbf{Z};P)\right\rbrace.
\end{equation*}
Define $\boldsymbol{\Psi}_{P,\pi}\left( \mathbf{Z};\mathcal{G}\right)=
\left( \psi _{P,\pi,a} \left( \mathbf{Z};\mathcal{G}\right) \right)_{a\in%
\mathcal{A}}. $

\begin{proof}[Proof of Lemma \protect\ref{lemma:supplementation}]
We show first that that $\mathbf{G} \cup \mathbf{B}$ is an $\mathbf{L}-\mathbf{N}$ dynamic adjustment set. The assumption  $A \ort_{\mathcal{G}} \mathbf{G} \mid \mathbf{B}$ implies
\begin{equation}
f\left(A\mid \mathbf{G,B}\right) = f(A\mid\mathbf{B}).
\label{eq:pi_eq}
\end{equation}
Then, for all $P\in\mathcal{M}(\mathcal{G})$,
\begin{align*}
&E_{P}\left\lbrace \frac{\q Y }{f(A\mid\mathbf{G,B})}\right\rbrace
= E_{P}\left\lbrace \frac{\q Y }{f(A\mid\mathbf{B})}\right\rbrace
= \chi_{\pi}(P,\mathcal{G}),
\end{align*}
where the last equality holds because $\mathbf{B}$ is, by assumption, an $\mathbf{L}-\mathbf{N}$ dynamic adjustment set. This proves that $\mathbf{G} \cup \mathbf{B}$ is an $\mathbf{L}-\mathbf{N}$ dynamic adjustment set. Using \eqref{eq:pi_eq} we obtain
\begin{equation}
E_{P}\left\lbrace \qa  b(a,\mathbf{G},\mathbf{B};P)\right\rbrace=E_{P}\left\lbrace \frac{I_{a}(a)\qa Y}{f(a\mid \mathbf{G},\mathbf{B})}\right\rbrace=E_{P}\left\lbrace\frac{I_{a}(a) \qa Y}{f(a\mid \mathbf{B})}\right\rbrace=E_{P}\left\lbrace \qa  b(a,\mathbf{B};P)\right\rbrace.
\label{eq:b_pi_eq}
\end{equation}

Write
\begin{eqnarray*}
\psi _{P,\pi,a}\left( \mathbf{B};\mathcal{G}\right)  &=&\frac{I_{a}(A)\qa Y}{f(a\mid\mathbf{B})}-\qa \left\lbrace \frac{I_{a}(A) }{f(a\mid \mathbf{B})}- 1\right\rbrace b(a,\mathbf{B};P)-E_{P}\left[\qa  b(a,\mathbf{B};P)\right]\\
&=& \frac{I_{a}(A) \qa Y}{f(a\mid\mathbf{G,B})}-\qa \left\lbrace \frac{I_{a}(A)}{f(a\mid\mathbf{G,B})}-1\right\rbrace b(a,\mathbf{G,B};P)\\
&+& \qa\left\lbrace \frac{I_{a}(A)}{f(a\mid\mathbf{G,B})}-1\right\rbrace \left\{ b(a,\mathbf{G},\mathbf{%
B};P)-b(a,\mathbf{B};P)\right\} -E_{P}\left\lbrace \qa  b(a,\mathbf{G},\mathbf{B};P)\right\rbrace \\
&=&\psi _{P,\pi,a}\left( \mathbf{G},\mathbf{B};\mathcal{G}\right) +%
\left\lbrace \frac{I_{a}(A)}{f(a\mid\mathbf{G,B})}-1\right\rbrace \qa \left\lbrace b(a,\mathbf{G},\mathbf{B};P)-b(a,\mathbf{B};P)\right\rbrace
\end{eqnarray*}%
where the second equality follows from $\left( \ref{eq:pi_eq}\right)$ and \eqref{eq:b_pi_eq}. 
Since
\begin{equation}
E_{P}\left\{ \psi _{P,\pi,a}\left( \mathbf{G,B};\mathcal{G}\right) g(%
A,\mathbf{G},\mathbf{B})\right\} =0\text{ for any }g\text{ such
that }E_{P}\left\lbrace g(A,\mathbf{G},\mathbf{B})|\mathbf{G},\mathbf{B}%
\right\rbrace =0  
\label{eq:uncorr}
\end{equation}%
and
$$
E_{P}\left[ \left\lbrace \frac{I_{a}(A)}{f(a \mid \mathbf{G},\mathbf{B})}-1\right\rbrace \qa \left\lbrace b(a,\mathbf{G},\mathbf{B};P)-b(a,\mathbf{B};P)\right\rbrace  \mid \mathbf{G},%
\mathbf{B}\right] =0
$$
we conclude that 
\begin{eqnarray*}
var_{P}\left\lbrace \psi _{P,\pi,a}\left( \mathbf{B};\mathcal{G}\right) \right\rbrace    
=var_{P}\left\lbrace \psi _{P,\pi,a}\left( \mathbf{G},\mathbf{B};\mathcal{G}%
\right) \right\rbrace +var_{P}\left[ \left\lbrace \frac{I_{a}(A)}{f(a \mid \mathbf{G},\mathbf{B})}-1\right\rbrace \qa \left\lbrace b(a,\mathbf{G},\mathbf{B};P)-b(a,\mathbf{B};P)\right\rbrace \right] .
\end{eqnarray*}%
Next note that
\begin{align}
&var_{P}\left[ \left\lbrace \frac{I_{a}(A)}{f(a \mid \mathbf{G},\mathbf{B})}-1\right\rbrace \qa\left\lbrace b(a,\mathbf{G},\mathbf{B};P)-b(a,\mathbf{B};P)\right\rbrace \right] =
\nonumber
\\
&
E_{P}\left[  \left\lbrace b(a,\mathbf{G,B};P)-b(a,\mathbf{B};P)\right\rbrace^{2}\qa^{2}var_{P}\left\lbrace \frac{I_{a}(A) }{f(a\mid\mathbf{G,B})}-1 \mid \mathbf{G,B}\right\rbrace \right] =
\nonumber
\\
&
E_{P}\left[  \left\lbrace b(a,\mathbf{G,B};P)-b(a,\mathbf{B};P)\right\rbrace^{2}\qa^{2}\left\lbrace \frac{1}{f(a\mid\mathbf{G,B})}-1 \right\rbrace \right]=
\nonumber
\\
\nonumber
&
E_{P}\left[  \left\lbrace b(a,\mathbf{G,B};P)-b(a,\mathbf{B};P)\right\rbrace^{2}\qa^{2}\left\lbrace \frac{1}{f(a\mid\mathbf{B})}-1 \right\rbrace \right]=
\\
&
E_{P}\left[ var_{P}\left\lbrace b(a,\mathbf{G,B};P)\mid \mathbf{B} \right\rbrace \qa^{2}\left\lbrace \frac{1}{f(a\mid\mathbf{B})}-1 \right\rbrace \right],
\label{eq:var_Sa}
\end{align}
where the last equality follows from 
\begin{align*}
b(a,\mathbf{B};P)=E_{P}\left( Y\mid A=a,\mathbf{%
B}\right)=E_{P}\left\lbrace E_{P}\left( Y\mid A=a,\mathbf{G},%
\mathbf{B}\right) \mid A=a,\mathbf{B}\right\rbrace 
&=E_{P}\left\lbrace
b(a,\mathbf{G,B})\mid A=a,\mathbf{B}\right\rbrace
\\
&=E_{P}\left\lbrace b(a,\mathbf{G,B})\mid \mathbf{B}\right\rbrace ,
\end{align*}
since $A \perp \!\!\!\perp _{\mathcal{G}}\mathbf{G}\mid \mathbf{B}$ by assumption.

Now, by \eqref{eq:decomp_if}, $\psi _{P,\pi}\left( \mathbf{Z};\mathcal{G}\right) = \mathbf{1}^{\top }\boldsymbol{\Psi}_{P,\pi}\left( \mathbf{Z};\mathcal{G}\right)$, where $\mathbf{1}$ is a vector of length $\# \mathcal{A}$ filled with ones.
Hence
$
var_{P}\left\lbrace \psi _{P,\pi}\left( \mathbf{B};\mathcal{G}\right) \right\rbrace = var_{P}\left\lbrace \mathbf{1}^{\top }\boldsymbol{\Psi}_{P,\pi}\left( \mathbf{B};\mathcal{G}\right)\right\rbrace$. Recall that $\mathbf{S}= \left( S_{a}\right)_{a\in\mathcal{A}}$, where
\begin{align*}
S_{a}\equiv \left\lbrace \frac{I_{a}(A)}{f(a\mid \mathbf{G},\mathbf{B})}-1\right\rbrace \qa \left\lbrace b(a,\mathbf{G},\mathbf{B};P)-b(a,\mathbf{B};P)\right\rbrace,
\end{align*} 
Since  by \eqref{eq:uncorr} it holds that $E_{P}\left( \mathbf{S} \mid \mathbf{G},\mathbf{B}\right)=\mathbf{0}$, we have
\begin{align*}
\sigma^{2}_{\pi,\mathbf{B}}(P)= var_{P}\left\lbrace \psi _{P,\pi}\left( \mathbf{B};\mathcal{G}\right) \right\rbrace = var_{P}\left\lbrace \mathbf{1}^{\top }\boldsymbol{\Psi}_{P,\pi}\left( \mathbf{B};\mathcal{G}\right) \right\rbrace &= var_{P}\left\lbrace \mathbf{1}^{\top }\boldsymbol{\Psi}_{P,\pi}\left( \mathbf{G,B};\mathcal{G}\right) \right\rbrace  + var_{P}\left\lbrace \mathbf{1}^{\top } \mathbf{S} \right\rbrace
\\
& = \sigma^{2}_{\pi,\mathbf{G},\mathbf{B}}(P) + \mathbf{1}^{\top }var_{P}(\mathbf{S})  \mathbf{1}.
\end{align*}
We already derived the expression for $var_{P}\left( S_{a}\right) $ in $
\left( \ref{eq:var_Sa}\right).$ Now, if $a\neq a^{\prime}$
\begin{eqnarray*}
&&cov_{P}\left( S_{a},S_{a^{\prime }}\right)  =
\\
&&E_{P}\left[
\left\{ \frac{I_{a}(A) \qa}{f(a\mid\mathbf{B})}%
-\qa\right\} \left\{ \frac{I_{a^{\prime}}(A) \qap}{f(a^{\prime}\mid\mathbf{B})}%
-\qap\right\}
 \left\{ b(a,\mathbf{G},%
\mathbf{B};P)-b(a,\mathbf{B};P)\right\} \left\{ b(a^{\prime},\mathbf{G},\mathbf{B};P)-b(a^{\prime},\mathbf{B}%
;P)\right\} \right]  =\\
&&E_{P}\left[ 
\left\{ \frac{I_{a}(A) \qa}{f(a\mid\mathbf{B})}%
-\qa\right\} \left\{ \frac{I_{a^{\prime}}(A) \qap}{f(a^{\prime}\mid\mathbf{B})}%
-\qap\right\} cov_{P}\left[ b(a,\mathbf{G},\mathbf{B};P),b(a^{\prime},\mathbf{G},%
\mathbf{B};P)|\mathbf{B},A	\right] \right] = \\
&&E_{P}\left[ 
\left\{ \frac{I_{a}(A) \qa}{f(a\mid\mathbf{B})}%
-\qa\right\} \left\{ \frac{I_{a^{\prime}}(A) \qap}{f(a^{\prime}\mid\mathbf{B})}%
-\qap\right\} cov_{P}\left[ b(a,\mathbf{G},\mathbf{B};P),b(a^{\prime},\mathbf{G},%
\mathbf{B};P)|\mathbf{B}	\right] \right]= \\
&&-E_{P}\left[ \qa \qap cov_{P}\left\lbrace b(a,\mathbf{G},\mathbf{B};P),b(a^{\prime},\mathbf{G},%
\mathbf{B};P)|\mathbf{B}	\right\rbrace \right] .
\end{eqnarray*} 

This finishes the proof Lemma \ref{lemma:supplementation}.
\end{proof}

\begin{proof}[Proof of Lemma \protect\ref{lemma:deletion}]
We first show that $\mathbf{G}$ is an $\mathbf{L}-\mathbf{N}$ dynamic adjustment set. The assumptions that $Y\perp \!\!\!\perp _{\mathcal{G}}\mathbf{B}%
\mid A,\mathbf{G}$ and $\mathbf{L}\subset\mathbf{G}$ imply that for all $P\in \mathcal{%
M}(\mathcal{G})$ 
\begin{eqnarray}
b(A,\mathbf{G,B};P) = E_{P}\left( Y\mid A,\mathbf{G},\mathbf{B}\right)   
=E_{P}\left( Y\mid A,\mathbf{G}\right)  
= b(A,\mathbf{G};P)  \label{eq:b_a}.
\end{eqnarray}%
Hence,
\begin{align*}
E_{P}\left[ E_{\pi^{\ast}_{\mathbf{G}}}\left\lbrace b(A,\mathbf{G};P)\mid \mathbf{G} \right\rbrace\right] =E_{P}\left[ E_{\pi^{\ast}_{\mathbf{G}}}\left\lbrace b(A,\mathbf{G},\mathbf{B};P)\mid \mathbf{G} \right\rbrace\right] =E_{P}\left[ E_{\pi^{\ast}_{\mathbf{G}\cup\mathbf{B}}}\left\lbrace b(A,\mathbf{G},\mathbf{B};P)\mid \mathbf{G}\cup\mathbf{B} \right\rbrace\right] =\chi_{\pi}(P;\mathcal{G}),
\end{align*}
where the first equality follows from \eqref{eq:b_a}, the second equality follows from the fact that, since 
$\pi^{\ast}_{\mathbf{G}}(A\mid \mathbf{G})\equiv\pi(A\mid \mathbf{L})$ and $\pi^{\ast}_{\mathbf{G}\cup\mathbf{B}}(A\mid \mathbf{G}\cup\mathbf{B})\equiv \pi(A\mid \mathbf{L})$ then $\pi^{\ast}_{\mathbf{G}}(A\mid \mathbf{G})=\pi^{\ast}_{\mathbf{G}\cup\mathbf{B}}(A\mid \mathbf{G}\cup\mathbf{B})$, and the third equality follows from the assumption that $\mathbf{G}\cup\mathbf{B} $ is an $\mathbf{L}-\mathbf{N}$ dynamic adjustment set. This shows that $\mathbf{G}$ is an $\mathbf{L}-\mathbf{N}$ dynamic adjustment set.

Next, write 
\[
var_{P}\left\lbrace \psi _{P,\pi,a}(\mathbf{G,B};\mathcal{G})\right\rbrace = var_{P}\left\lbrace E_{P}(\psi _{P,\pi,a}(\mathbf{G,B};%
\mathcal{G})\mid A,Y,\mathbf{G})\right\rbrace +E_{P}%
\left\lbrace var_{P}(\psi _{P,\pi,a}(\mathbf{G,B};\mathcal{G})\mid A
,Y,\mathbf{G})\right\rbrace .
\]%
Now 
\begin{align}
\nonumber
&E_{P}\left\lbrace \psi _{P,\pi,a}(\mathbf{G,B};\mathcal{G})\mid A,Y,\mathbf{G}\right\rbrace  =
\\
\nonumber
&E_{P}\left[ \frac{I_{a}(A) \qa}{f(a\mid\mathbf{G},\mathbf{B})} \lbrace Y - b(a,\mathbf{G};P)\rbrace + \qa b(a,\mathbf{G};P) - E_{P}\left \lbrace \qa b(a,\mathbf{G};P) \right\rbrace  \mid A,Y,\mathbf{G} \right]=
\\
\nonumber
&
I_{a}(A) \qa \lbrace Y - b(a,\mathbf{G};P)\rbrace E_{P}\left\lbrace \frac{1}{f(a\mid\mathbf{G},\mathbf{B})} \mid A,Y,\mathbf{G}\right\rbrace +\qa b(a,\mathbf{G};P) - E_{P}\left \lbrace \qa b(a,\mathbf{G};P) \right\rbrace = 
\\
\nonumber
&
I_{a}(A) \qa \lbrace Y - b(a,\mathbf{G};P)\rbrace  E_{P}\left\lbrace \frac{1}{f(a\mid\mathbf{G},\mathbf{B})} \mid A,\mathbf{G}\right\rbrace + \qa b(a,\mathbf{G};P) - E_{P}\left \lbrace \qa b(a,\mathbf{G};P) \right\rbrace =
\\
\nonumber
&
\frac{I_{a}(A) \qa}{f(a\mid\mathbf{G})} \lbrace Y - b(a,\mathbf{G};P)\rbrace  + \qa b(a,\mathbf{G};P) - E_{P}\left \lbrace \qa b(a,\mathbf{G};P) \right\rbrace =
\\
\nonumber
&
\psi _{P,\pi,a}(\mathbf{G};\mathcal{G}).
\end{align}
where the first equality follows from $\left( \ref{eq:b_a}\right)$, the
third equality follows from $Y\perp \!\!\!\perp _{\mathcal{G}}\mathbf{B}\mid A,\mathbf{G}$ and the fourth from Lemma 10 from \cite{eff-adj} which states that 
$$
E_{P}\left\lbrace \frac{1}{f(a\mid\mathbf{G},\mathbf{B})} \mid A,\mathbf{G}\right\rbrace =\frac{1}{f(a\mid\mathbf{G})} .
$$
On the
other hand, 
\begin{align*}
&var_{P}\left\lbrace \psi _{P,\pi,a}(\mathbf{G,B};\mathcal{G})\mid A%
,Y,\mathbf{G}\right\rbrace  =
\\
&var_{P}\left[ \frac{I_{a}(A) \qa}{f(a\mid \mathbf{G},\mathbf{B})} \lbrace Y - b(a,\mathbf{G};P) \rbrace + \qa b(a,\mathbf{G};P) - E_{P}\left \lbrace \qa b(a,\mathbf{G};P) \right\rbrace \mid A,Y,\mathbf{G}\right]  =
\\
&
I_{a}(A) \qa^{2} \lbrace Y - b(a,\mathbf{G};P)\rbrace ^{2}
var_{P}\left\{ \frac{1}{f(a\mid\mathbf{G,B})}\mid A=a,\mathbf{G}\right\} 
\end{align*}%
where the first equality follows from $\left( \ref{eq:b_a}\right) $ and the
second follows from $Y \ort_{\mathcal{G}}\mathbf{B}\mid A,\mathbf{G}.$ Thus 
\begin{align}
\nonumber
&E_{P}\left[ var_{P}\left\lbrace \psi _{P,\pi,a}(\mathbf{G,B};\mathcal{G})\mid A%
,Y,\mathbf{G}\right\rbrace \right] =
\\
&E_{P}\left[ \qa^{2}  f(a\mid\mathbf{G})var_{P}(Y\mid A= a,\mathbf{G})var_{P}\left\lbrace 
\frac{1}{f(a\mid\mathbf{G,B})}\mid A=a,\mathbf{G}%
\right\rbrace \right].
\label{eq:exp_cond_var}
\end{align}

Now, by \eqref{eq:decomp_if}, $\psi _{P,\pi}\left( \mathbf{Z};\mathcal{G}\right) = \mathbf{1}^{\top }\boldsymbol{\Psi}_{P,\pi}\left( \mathbf{Z};\mathcal{G}\right)$, where $\mathbf{1}$ is a vector of length $\# \mathcal{A}$ filled with ones.
Thus
$
\sigma _{\pi,\mathbf{G,B}}^{2}\left( P\right)=var_{P}\left\lbrace \psi _{P,\pi}\left( \mathbf{G}, \mathbf{B};\mathcal{G}\right) \right\rbrace = var_{P}\left\lbrace \mathbf{1}^{\top }\boldsymbol{\Psi}_{P,\pi}\left( \mathbf{G}, \mathbf{B};\mathcal{G}\right)\right\rbrace$. 
We then have
\begin{eqnarray*}
\sigma _{\pi,\mathbf{G,B}}^{2}\left( P\right)  &=&var_{P}\left[ E_{P}%
\left\lbrace \mathbf{1}^{\top} \boldsymbol{\Psi}_{P,\pi}\left( \mathbf{G,B};\mathcal{G}%
\right) |A,Y,\mathbf{G}\right\rbrace \right] +E_{P}\left[ var_{P}\left\lbrace 
\mathbf{1}^{\top}\boldsymbol{\Psi}_{P,\pi}\left( \mathbf{G,B};\mathcal{G}\right) |%
A,Y,\mathbf{G}\right\rbrace \right]  \\
&=&var_{P}\left\lbrace \mathbf{1}^{\top}\boldsymbol{\Psi}_{P,\pi}\left( \mathbf{G};\mathcal{G%
}\right) \right\rbrace +\mathbf{1}^{\top}E_{P}\left[ var_{P}\left\lbrace \boldsymbol{\Psi}
_{P,\pi}\left( \mathbf{G,B};\mathcal{G}\right) |A,Y,\mathbf{G}\right\rbrace %
\right] \mathbf{1} \\
&=&\sigma _{\pi,\mathbf{G}}^{2}\left( P\right) +\mathbf{1}^{\top}E_{P}\left[
var_{P}\left\lbrace \boldsymbol{\Psi}_{P,\pi}\left( \mathbf{G,B};\mathcal{G}\right) |%
A,Y,\mathbf{G}\right\rbrace \right] \mathbf{1}
\end{eqnarray*}%
We obtained an expression for $E_{P}\left[ var_{P}\left\lbrace \psi _{P,\pi,a}(\mathbf{G,B};\mathcal{G})\mid A%
,Y,\mathbf{G}\right\rbrace \right]$  in \eqref{eq:exp_cond_var}.
Using $\left( \ref{eq:b_a}\right) $, if $a\neq a^{\prime}$ we obtain
\begin{eqnarray*}
&&cov_{P}\left\lbrace \psi_{P,\pi,a}\left( \mathbf{G,B};\mathcal{G}\right) , \psi_{P,\pi,a^{\prime}}\left( \mathbf{G,B};\mathcal{G}\right) \mid A,Y, \mathbf{G}\right\rbrace   =
\\
&&cov_{P}\left[ \frac{I_{a}(A) \qa}{f(a\mid\mathbf{G},\mathbf{B})} \lbrace Y - b(a,\mathbf{G};P) \rbrace , \frac{I_{a^{\prime}}(A) \qap}{f(a^{\prime}\mid\mathbf{G},\mathbf{B})} \lbrace Y - b(a^{\prime},\mathbf{G};P)\rbrace |A,Y,\mathbf{G}%
\right]= 0,
\end{eqnarray*}
since $I_{a}(A)I_{a^{\prime}}(A)=0$.
This shows that
$$
\sigma _{\pi,\mathbf{G,B}}^{2}\left( P\right)  =\sigma _{\pi,\mathbf{G}}^{2}\left( P\right) + \sum\limits_{a\in\mathcal{A}} \left( E_{P}\left[ \pi^{2}(a\mid\mathbf{L})  f(a\mid \mathbf{G})var_{P}(Y\mid A= a,\mathbf{G})var_{P}\left\lbrace
\frac{1}{f(a\mid\mathbf{G,B})}\mid A=a,\mathbf{G}%
\right\rbrace \right] \right).
$$

This concludes the proof of Lemma \ref{lemma:deletion}.
\end{proof}

The proof of Proposition \ref{prop:opt_adj} below uses Proposition \ref%
{prop:equiv_dyn}, the proof of which can be found in the following section.

\begin{proof}[Proof of Proposition \ref{prop:opt_adj}]
Let $\mathbf{Z}$ be an $\mathbf{L}-\mathbf{N}$ dynamic adjustment set. Note that by Proposition \ref{prop:equiv_dtr}, $\mathbf{Z}$ is an $\mathbf{L}-\mathbf{N}$ static adjustment set.

We begin with the proof of  $A \ort_{\mathcal{G}} \mathbf{O}\setminus \mathbf{Z} \mid \mathbf{Z}$.  Consider a path $\pi$ in $\mathcal{G}$ between $A$ and a vertex $O \in \mathbf{O}\setminus \mathbf{Z}$. Note that since $\mathbf{O}$ does not contain descendants of $A$, if $\pi$ is directed then it enters $A$ through the back-door. Now, since $\mathbf{L} \subset\mathbf{O}$, $\mathbf{L} \subset\mathbf{Z}$ and $O \in \mathbf{O}\setminus \mathbf{Z}$, it holds that $O \in \mathbf{O} \setminus \mathbf{L}$. Hence, there exists a directed path from $O$ to $Y$, say $\delta$, such that all vertices in that path except for $O$ are members of $\forb(A,Y,\mathcal{G})$. The path from $A$ to $Y$ obtained by joining $\pi$ and $\delta$, say $\gamma$, is non-directed. Since $\mathbf{Z}$ is an $\mathbf{L}-\mathbf{N}$ adjustment set, by Corollary 1 from \cite{shpitser-adjustment}, it does not contain vertices in $\forb(A,Y,\mathcal{G})$ and it has to block $\gamma$. We conclude that $\pi$ must be blocked by $\mathbf{Z}$, which is what we wanted to show.

Next, we show that $Y \ort_{\mathcal{G}} \mathbf{Z}\setminus \mathbf{O} \mid A, \mathbf{O}$. Consider a path $\pi$ in $\mathcal{G}$ between $Y$ and a vertex $Z \in \mathbf{Z}\setminus \mathbf{O}$. Suppose for the sake of contradiction that $\pi$ is open given $ A, \mathbf{O}$. Now, if $\pi$ does not have colliders, by Lemma E.4 from \cite{perkovic}, it is blocked by   $A, \mathbf{O}\setminus \mathbf{L}$ and hence it is blocked by $A, \mathbf{O}$, which is a contradiction. Assume  then that $\pi$ has at least one collider. Let $C$ be the collider on $\pi$ that is closest to $Y$. Suppose first that in $\pi$ the edge containing $Y$ points out of $Y$.  Then $C$ is a descendant of $Y$. Since $\pi$ is open given $A,\mathbf{O}$, it follows that $C$ is an ancestor of either $A$ or $\mathbf{O}$. This implies that either $A$ or a vertex in $\mathbf{O}$ is a descendant of $Y$. This contradicts the fact that $A \in \an_{\mathcal{G}}(Y)$ and $\mathbf{O}\cap \de_{\mathcal{G}}(A)=\emptyset$. Suppose next that in $\pi$ the edge containing $Y$ points into $Y$.
Let $F$ bet the only fork on $\pi$ that lies between $C$ and $Y$. Since $\pi$ is open given $ A, \mathbf{O}$, $C$ is an ancestor of either $A$ or a vertex in $\mathbf{O}$. Since $F$ is an ancestor of $C$, it must be that $F$ is an ancestor of either $A$ or a vertex in $\mathbf{O}$. Since $\mathbf{O}\cap \de_{\mathcal{G}}(A)=\emptyset$, it follows that $F\not \in \de_{\mathcal{G}}(A)$. Thus, there exists a vertex on the sub-path of $\pi$ that goes from $F$ to $Y$ that is a member of $\mathbf{O}$. This implies that $\pi$ is closed given $A, \mathbf{O}$, which is a contradiction. Hence  $Y \ort_{\mathcal{G}} \mathbf{Z}\setminus \mathbf{O} \mid A, \mathbf{O}$ holds.

Finally, we prove that $\mathbf{O}$ is an $\mathbf{L}-\mathbf{N}$ dynamic adjustment set. Clearly, $\mathbf{L}\subset \mathbf{O}\subset \an_{\mathcal{G}}(\lbrace A,Y\rbrace \cup \mathbf{L})$. Note that from any vertex in $\mathbf{O}\setminus \mathbf{L}$ there exists a directed path in $\mathcal{G}$ to $Y$ that only intersects vertices in $\forb(A,Y,\mathcal{G})$. The definition of $\mathcal{H}^{1}$ then implies that $\mathbf{O}$ is exactly the set of neighbours of $Y$ in $\mathcal{H}^{1}$. Thus, $\mathbf{O}$ is an $A-Y$ cut in $\mathcal{H}^{1}$. It follows from part 2) of Proposition \ref{prop:equiv_dyn} that $\mathbf{O}$ is an $\mathbf{L}-\mathbf{N}$ dynamic adjustent set.
\end{proof}

\subsection{Proofs of results in Section \protect\ref{sec:new_graph}}

\begin{proof}[Proof fo Lemma \ref{lemma:indep_equiv}]
Assume first that $U \perp_{\mathcal{H}^{0}} V \mid \mathbf{W}$ holds. If no path between $U$ and $V$ in $\mathcal{H}^{1}$ exists, the result is trivial. Hence, assume there exists a path $\pi$ from $U$ to $V$ in $\mathcal{H}^{1}$. We will show that $\pi$ intersects $\mathbf{W}$. If all edges in $\pi$ are also present in $\mathcal{H}^{0}$ then clearly $\pi$ has to intersect a vertex in $\mathbf{W}$. Otherwise, if an edge, say $S-T$, in $\pi$ is not present in $\mathcal{H}^{0}$ then $S-T$ is of one of two types of edges:
(i) the edge goes from a vertex in $\mathbf{L}$ to either $A$ or $Y$,
or (ii) there exists a path from $S$ to $T$ in  $\mathcal{H}^{0}$ that goes only through vertices in $\ignore(A,Y,\mathcal{G})$. If there exists an edge $S-T$ in $\pi$ of type (i) then, since $\mathbf{L}\subset \mathbf{W}$, we conclude that $\pi$ is blocked by $\mathbf{W}$ in $\mathcal{H}^{1}$. Assume then that all edges in $S-T$ that are not present in $\mathcal{H}^{0}$ are of type (ii). Consider the path $\delta$ in $\mathcal{H}^{0}$ obtained from $\pi$ by replacing each edge $S-T$ in $\pi$ that is not present in $\mathcal{H}^{0}$ by the corresponding path in $\mathcal{H}^{0}$ from $S$ to $T$ that goes only through vertices in $\ignore(A,Y,\mathcal{G})$. Since by assumption $U \perp_{\mathcal{H}^{0}} V \mid \mathbf{W}$, the path $\delta$ has to intersect $\mathbf{W}$. Since $\mathbf{W}\subset \mathbf{V}(\mathcal{H}^{1}) $, we conclude that $\pi$ has to intersect $\mathbf{W}$. 

Assume next that $U \perp_{\mathcal{H}^{1}} V \mid \mathbf{W}$ holds. If no path between $U$ and $V$ in $\mathcal{H}^{0}$ exists, the result is trivial. Assume then that there exists a path $\pi$ from $U$ to $V$ in $\mathcal{H}^{0}$. We will show that $\pi$ intersects $\mathbf{W}$. If $\pi$ goes only through vertices that are not in $\ignore(A,Y,\mathcal{G})$ then $\pi$ is also a path in $\mathcal{H}^{1}$ and hence it intersects a vertex in $\mathbf{W}$. If $\pi$ intersects at least one vertex in $\ignore(A,Y,\mathcal{G})$, consider the path $\delta$ in $\mathcal{H}^{1}$ obtained from $\pi$ by removing all vertices in $\ignore(A,Y,\mathcal{G})$ and adding an edge between any pair of remaining vertices if they were connected in $\pi$ by a path going only through vertices $\ignore(A,Y,\mathcal{G})$. Since by assumption $U \perp_{\mathcal{H}^{1}} V \mid \mathbf{W}$, the path $\delta$ has to intersect $\mathbf{W}$. We conclude that $\pi$ has to intersect $\mathbf{W}$. 
\end{proof}

In order to prove Proposition \ref{prop:equiv_dyn}, we will need the
following lemmas.

\begin{lemma}
\label{lemma:cuts_contain} Let $\mathcal{G}$ be a DAG with vertex set $%
\mathbf{V} $ and assume that $\left( A,Y,\mathbf{L},\mathbf{N}\right) $
satisfy the inclusion conditions. Assume that $(\mathbf{L, N})$ is an
admissible pair with respect to $A,Y$ in $\mathcal{G}$. Then $\mathbf{V}(%
\mathcal{H}^{1})= \left\lbrace \an_{\mathcal{G}}(\lbrace A, Y\rbrace \cup 
\mathbf{L})\cap \mathbf{N} \right\rbrace \setminus \left\lbrace \forb(A,Y,%
\mathcal{G})\setminus \lbrace A,Y\rbrace \right\rbrace$. If $\mathbf{Z}$ is
an $A-Y$ cut in $\mathcal{H}^{1}$ then $\mathbf{L}\subset \mathbf{Z} \subset 
\mathbf{V}(\mathcal{H}^{1})$.
\end{lemma}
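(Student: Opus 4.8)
The plan is to establish the two assertions separately. For the identification of $\mathbf{V}(\mathcal{H}^1)$, I would simply unwind the construction in Definition \ref{def:H}. The vertex set of $\mathcal{H}^0 = \{\mathcal{G}^{pbd}_{\an_{\mathcal{G}}(\{A,Y\}\cup\mathbf{L})}(A,Y)\}^m$ equals $\an_{\mathcal{G}}(\{A,Y\}\cup\mathbf{L})$, since forming the proper back-door graph only deletes edges, and moralization does not touch the vertex set. Passing from $\mathcal{H}^0$ to $\mathcal{H}^1$ removes exactly the vertices in $\ignore(A,Y,\mathbf{L},\mathbf{N},\mathcal{G}) = \{\an_{\mathcal{G}}(\{A,Y\}\cup\mathbf{L})\setminus\{A,Y\}\}\cap\{\mathbf{N}^c\cup\forb(A,Y,\mathcal{G})\}$ and adds no new vertices (steps 2 and 3 only add edges). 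Hence
\begin{equation*}
\mathbf{V}(\mathcal{H}^1) = \an_{\mathcal{G}}(\{A,Y\}\cup\mathbf{L}) \setminus \ignore(A,Y,\mathbf{L},\mathbf{N},\mathcal{G}),
\end{equation*}
and a short set-theoretic manipulation, using that $A,Y\in\an_{\mathcal{G}}(\{A,Y\}\cup\mathbf{L})\cap\mathbf{N}$ (by the inclusion assumptions) and that $A,Y\in\forb(A,Y,\mathcal{G})$, rewrites this as $\{\an_{\mathcal{G}}(\{A,Y\}\cup\mathbf{L})\cap\mathbf{N}\}\setminus\{\forb(A,Y,\mathcal{G})\setminus\{A,Y\}\}$, which is the claimed expression. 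This part is routine bookkeeping.

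For the second assertion, suppose $\mathbf{Z}$ is an $A-Y$ cut in $\mathcal{H}^1$. That $\mathbf{Z}\subset\mathbf{V}(\mathcal{H}^1)$ is immediate from the definition of a vertex cut (a cut is by definition a subset of the vertex set, disjoint from $\{A,Y\}$). For the inclusion $\mathbf{L}\subset\mathbf{Z}$: step 3 of Definition \ref{def:H} adds an edge $L-Y$ for every $L\in\mathbf{L}$. Thus for each $L\in\mathbf{L}$, the single edge $L-Y$ is an $A-Y$ path's final segment — more precisely, I would invoke admissibility to get (via Proposition \ref{prop:equiv_dyn} part 1, or directly) that $A$ and $Y$ are not adjacent in $\mathcal{H}^1$, so any cut is nonempty and must sit strictly between them; then observe that the edge $A-L$ (also added in step 3) together with $L-Y$ gives a length-two path $A-L-Y$ from $A$ to $Y$ whose only interior vertex is $L$. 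Any $A-Y$ cut must intersect this path, and since $A,Y\notin\mathbf{Z}$, it must contain $L$. Running this over all $L\in\mathbf{L}$ gives $\mathbf{L}\subset\mathbf{Z}$.

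The only mild subtlety — and the step I would treat most carefully — is the claim that step 3 actually introduces both edges $A-L$ and $L-Y$ with $L$ as the unique interior vertex of the resulting two-edge path; this needs $\mathbf{L}\subset\mathbf{V}(\mathcal{H}^1)$, i.e. that no vertex of $\mathbf{L}$ was removed in step 1. But $\mathbf{L}\cap\ignore(A,Y,\mathbf{L},\mathbf{N},\mathcal{G}) = \emptyset$ because $\mathbf{L}\subset\mathbf{N}$ (so $\mathbf{L}\cap\mathbf{N}^c=\emptyset$) and, by admissibility and Theorem \ref{theo:van} applied to the witnessing adjustment set, $\mathbf{L}\cap\forb(A,Y,\mathcal{G})=\emptyset$. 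With $\mathbf{L}\subset\mathbf{V}(\mathcal{H}^1)$ in hand the argument above goes through verbatim, completing the proof.
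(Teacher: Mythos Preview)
Your proposal is correct and amounts to a careful unpacking of exactly what the paper's one-line proof (``This follows immediately from the definition of $\mathcal{H}^{1}$'') leaves implicit: tracing the vertex set through the construction in Definition~\ref{def:H} and using the step~3 edges $A-L$ and $L-Y$ to force $\mathbf{L}\subset\mathbf{Z}$. One minor remark: your aside about invoking Proposition~\ref{prop:equiv_dyn} part~1 for non-adjacency of $A$ and $Y$ would be circular in the paper's logical order (that proposition ultimately rests on the present lemma via Lemma~\ref{lemma:equiv}), but as you yourself note, the path $A-L-Y$ already forces $L\in\mathbf{Z}$ for any cut, so the non-adjacency claim is unnecessary here.
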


\begin{proof}
This follows immediately from the definition of $\mathcal{H}^{1}$.
\end{proof}

\begin{lemma}
\label{lemma:equiv} Let $\mathcal{G}$ be a DAG with vertex set $\mathbf{V} $
and assume that $\left( A,Y,\mathbf{L},\mathbf{N}\right) $ satisfy the
inclusion conditions.

\begin{enumerate}
\item If $\mathbf{Z}$ is an $A-Y$ cut in $\mathcal{H}^{1}$ then $\mathbf{Z}$
is a $\mathbf{L}-\mathbf{N}$ static adjustment set with respect to $A,Y$ in $%
\mathcal{G}$.

\item If $\mathbf{Z} \subset \an_{\mathcal{G}}(\lbrace A,Y\rbrace \cup 
\mathbf{L})$ then $\mathbf{Z}$ is an $\mathbf{L}-\mathbf{N}$ static
adjustment set with respect to $A,Y$ in $\mathcal{G}$ if and only if $%
\mathbf{Z}$ is an $A-Y$ cut in $\mathcal{H}^{1}$.

\item $\mathbf{Z}$ is a minimal $\mathbf{L}-\mathbf{N}$ static adjustment
set with respect to $A,Y$ in $\mathcal{G}$ if and only if $\mathbf{Z}$ is a
minimal $A-Y$ cut in $\mathcal{H}^{1}$.

\item $\mathbf{Z}$ is a minimum $\mathbf{L}-\mathbf{N}$ static adjustment
set with respect to $A,Y$ in $\mathcal{G}$ if and only if $\mathbf{Z}$ is a
minimum $A-Y$ cut in $\mathcal{H}^{1}$.
\end{enumerate}
\end{lemma}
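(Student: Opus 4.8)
The plan is to translate, via Theorem~\ref{theo:van}, membership in the class of $\mathbf{L}-\mathbf{N}$ static adjustment sets into the d-separation $Y\ort_{\mathcal{G}^{pbd}(A,Y)}A\mid\mathbf{Z}$ together with the side conditions $\mathbf{Z}\cap\forb(A,Y,\mathcal{G})=\emptyset$ and $\mathbf{L}\subset\mathbf{Z}\subset\mathbf{N}$, and then to match this d-separation with separation of $A$ and $Y$ by $\mathbf{Z}$ in $\mathcal{H}^{1}$ by means of Lemma~\ref{lemma:indep_equiv} and one bookkeeping fact about ancestor sets in the proper back-door graph. Write $\mathcal{G}'=\mathcal{G}^{pbd}(A,Y)$ and $\mathbf{W}=\an_{\mathcal{G}}(\{A,Y\}\cup\mathbf{L})$, so that $\mathcal{H}^{0}=(\mathcal{G}'_{\mathbf{W}})^{m}$. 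The key claim is that whenever $\mathbf{L}\subset\mathbf{Z}\subset\mathbf{W}$ and $\{A,Y\}\cap\mathbf{Z}=\emptyset$, one has $\an_{\mathcal{G}'}(\{A,Y\}\cup\mathbf{Z})=\mathbf{W}$; granting this, the moralization equivalence~\eqref{eq:moral_equiv} applied to $\mathcal{G}'$ gives
\[ Y\ort_{\mathcal{G}^{pbd}(A,Y)}A\mid\mathbf{Z}\quad\Longleftrightarrow\quad A\perp_{\mathcal{H}^{0}}Y\mid\mathbf{Z}. \]

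To prove the ancestor-set identity I would use that $\mathcal{G}'$ is obtained from $\mathcal{G}$ by deleting edges, each of which has tail $A$ and head in $\an_{\mathcal{G}}(Y)$. This yields: (i) $\an_{\mathcal{G}'}(A)=\an_{\mathcal{G}}(A)$, since a directed path into $A$ uses no edge out of $A$; (ii) $\an_{\mathcal{G}'}(\mathbf{L})=\an_{\mathcal{G}}(\mathbf{L})$, since $\mathbf{L}\subset\nd_{\mathcal{G}}(A)$ forbids any directed path ending in $\mathbf{L}$ from using an edge out of $A$; and (iii) $\an_{\mathcal{G}}(Y)\subset\an_{\mathcal{G}'}(Y)\cup\an_{\mathcal{G}}(A)$, since a directed path into $Y$ using a deleted edge factors through $A$. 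Because $A\in\an_{\mathcal{G}}(Y)$, items (i) and (iii) give $\an_{\mathcal{G}'}(\{A,Y\})=\an_{\mathcal{G}}(\{A,Y\})$; combined with (ii), with $\mathbf{L}\subset\mathbf{Z}$, and with the observation that $\mathbf{W}$ is ancestral in $\mathcal{G}'$ (it is $\mathcal{G}$-ancestral and $\mathcal{G}'$ has fewer edges), so that $\an_{\mathcal{G}'}(\mathbf{Z})\subset\mathbf{W}$, one sandwiches $\mathbf{W}\subset\an_{\mathcal{G}'}(\{A,Y\}\cup\mathbf{Z})\subset\mathbf{W}$.

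Parts 1 and 2 are then short. For part 1, if $\mathbf{Z}$ is an $A-Y$ cut in $\mathcal{H}^{1}$, Lemma~\ref{lemma:cuts_contain} gives $\mathbf{L}\subset\mathbf{Z}\subset\mathbf{V}(\mathcal{H}^{1})$, hence $\mathbf{Z}\subset\mathbf{N}$, $\mathbf{Z}\cap\forb(A,Y,\mathcal{G})=\emptyset$ and $\mathbf{Z}\subset\mathbf{W}$; since $A,Y\in\mathbf{V}(\mathcal{H}^{1})$, Lemma~\ref{lemma:indep_equiv} turns $A\perp_{\mathcal{H}^{1}}Y\mid\mathbf{Z}$ into $A\perp_{\mathcal{H}^{0}}Y\mid\mathbf{Z}$, the claim turns that into $Y\ort_{\mathcal{G}^{pbd}(A,Y)}A\mid\mathbf{Z}$, and Theorem~\ref{theo:van} concludes. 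For part 2, the $\Leftarrow$ direction is part 1, and for $\Rightarrow$, Theorem~\ref{theo:van} supplies the three conditions for a static $\mathbf{L}-\mathbf{N}$ adjustment set $\mathbf{Z}\subset\an_{\mathcal{G}}(\{A,Y\}\cup\mathbf{L})$, which by the characterization in Lemma~\ref{lemma:cuts_contain} put $\mathbf{Z}\subset\mathbf{V}(\mathcal{H}^{1})$; the claim and Lemma~\ref{lemma:indep_equiv} then run backwards to give $A\perp_{\mathcal{H}^{1}}Y\mid\mathbf{Z}$.

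Parts 3 and 4 reduce to part 2 by an order-theoretic argument, the only extra input being that both families lie inside $\an_{\mathcal{G}}(\{A,Y\}\cup\mathbf{L})$: $A-Y$ cuts of $\mathcal{H}^{1}$ do by Lemma~\ref{lemma:cuts_contain}, and every minimal static $\mathbf{L}-\mathbf{N}$ adjustment set (hence every minimum one) does by Proposition~\ref{prop:equiv_dtr}. Part 2 thus identifies the two families on the collection of subsets of $\an_{\mathcal{G}}(\{A,Y\}\cup\mathbf{L})$, and since any subset of a member again lies in $\an_{\mathcal{G}}(\{A,Y\}\cup\mathbf{L})$, a proper subset belonging to one family belongs to the other; hence the two families share their minimal elements (part 3) and, using that a minimum cut is minimal and a minimum adjustment set is minimal, share the same minimum cardinality and the same sets attaining it (part 4). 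The main obstacle is the ancestor-set identity of the first two paragraphs: the analogous statement is false for a generic ancestral set strictly larger than $\an_{\mathcal{G}^{pbd}(A,Y)}(\{A,Y\}\cup\mathbf{Z})$, and it is exactly the interplay of $\mathbf{L}\subset\mathbf{Z}$, $\mathbf{L}\subset\nd_{\mathcal{G}}(A)$ and $A\in\an_{\mathcal{G}}(Y)$ that forces $\an_{\mathcal{G}^{pbd}(A,Y)}(\{A,Y\}\cup\mathbf{Z})$ to coincide with the set $\an_{\mathcal{G}}(\{A,Y\}\cup\mathbf{L})$ built into the definition of $\mathcal{H}^{0}$.
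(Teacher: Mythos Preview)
Your proposal is correct and follows essentially the same route as the paper: both translate adjustment-set membership via Theorem~\ref{theo:van} into d-separation in $\mathcal{G}^{pbd}(A,Y)$, pass to $\mathcal{H}^{0}$ by the moralization equivalence~\eqref{eq:moral_equiv}, and then to $\mathcal{H}^{1}$ by Lemma~\ref{lemma:indep_equiv}, with Lemma~\ref{lemma:cuts_contain} supplying the side conditions. The one minor difference is that you prove the equality $\an_{\mathcal{G}^{pbd}(A,Y)}(\{A,Y\}\cup\mathbf{Z})=\an_{\mathcal{G}}(\{A,Y\}\cup\mathbf{L})$ up front and use it as a clean biconditional, whereas the paper argues asymmetrically, using a subgraph inclusion $\mathcal{H}^{0,\prime}\subset\mathcal{H}^{0}$ for part~1 and the ancestor-set equality only for part~2; your formulation is slightly more economical but not a different idea.
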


\begin{proof}[Proof of Lemma \ref{lemma:equiv}]

Our proof will make use of the following facts.
By Theorem 1 and Corollary 2 of \cite{vanAI}, if $\mathbf{Z}$ is a minimal $\mathbf{L}-\mathbf{N}$ static adjustment set then $\mathbf{Z}\subset \an_{\mathcal{G}^{pbd}}(\lbrace A, Y\rbrace\cup \mathbf{L})$. Note also that $ \an_{\mathcal{G}}(\lbrace A, Y\rbrace \cup \mathbf{L})=\an_{\mathcal{G}^{pbd}(A,Y)}(\lbrace A, Y\rbrace \cup \mathbf{L})$.

1) By assumption $Y \perp_{\mathcal{H}^{1}} A \mid \mathbf{Z}$. Lemma \ref{lemma:cuts_contain} implies 
$\mathbf{L}\subset \mathbf{Z} \subset \an_{\mathcal{G}}(\lbrace A,Y\rbrace \cup \mathbf{L})\cap \mathbf{N}$ and $\mathbf{Z}\cap \forb(A,Y,\mathcal{G}) = \emptyset$. In particular, since $\mathbf{L}\subset \mathbf{Z}$, Lemma \ref{lemma:indep_equiv} implies $Y \perp_{\mathcal{H}^{0}} A \mid \mathbf{Z}$.
Let $\mathbf{M}^{\prime}=\an_{\mathcal{G}}(\lbrace A,Y\rbrace \cup \mathbf{Z})$ and $\mathcal{H}^{0, \prime}= \left\lbrace \mathcal{G}^{pbd}_{\mathbf{M}^{\prime}}(A,Y)\right\rbrace^{m}$. Since $\mathbf{Z} \subset \an_{\mathcal{G}}(\lbrace A,Y\rbrace \cup \mathbf{L})$ we have that
$\mathbf{M}^{\prime} \subset \an_{\mathcal{G}}(\lbrace A,Y\rbrace \cup \mathbf{L})$. This implies that $\mathcal{G}^{pbd}_{\mathbf{M}^{\prime}}(A,Y)$ is a subgraph of $\mathcal{G}^{pbd}_{\an_{\mathcal{G}}(\lbrace A,Y\rbrace \cup \mathbf{L})}(A,Y)$ and hence that $\mathcal{H}^{0, \prime}$ is a subgraph of $\mathcal{H}^{0}$. Then $Y \perp_{\mathcal{H}^{0}} A \mid \mathbf{Z}$ implies $Y \perp_{\mathcal{H}^{0,\prime}} A \mid \mathbf{Z}$. Now \eqref{eq:moral_equiv} implies $Y \ort_{\mathcal{G}^{pbd}(A,Y)} A \mid \mathbf{Z}$. Since moreover $\mathbf{Z}\cap \forb(A,Y,\mathcal{G}) = \emptyset$ and $\mathbf{L}\subset \mathbf{Z}\subset \mathbf{N}$, Theorem \ref{theo:van} implies that $\mathbf{Z}$ is a $\mathbf{L}-\mathbf{N}$ static adjustment set with respect to $A,Y$ in $\mathcal{G}$.

2) By part 1) we only need to prove that if $\mathbf{Z}$ is an $\mathbf{L}-\mathbf{N}$ static adjustment set with respect to $A,Y$ in $\mathcal{G}$ and $\mathbf{Z} \subset \an_{\mathcal{G}}(\lbrace A,Y\rbrace \cup \mathbf{L})$  then $\mathbf{Z}$ is an $A-Y$ cut in $\mathcal{H}^{1}$. Assume that $\mathbf{Z}$ is an $\mathbf{L}-\mathbf{N}$ static adjustment set and $\mathbf{Z} \subset \an_{\mathcal{G}}(\lbrace A,Y\rbrace \cup \mathbf{L})$. Then by Theorem 1, $Y \ort_{\mathcal{G}^{pbd}(A,Y)} A \mid \mathbf{Z}$ and $\mathbf{Z}\cap \forb(A,Y,\mathcal{G}) = \emptyset$. Moreover, since $\mathbf{L}\subset\mathbf{Z}\subset \an_{\mathcal{G}}(\lbrace A,Y\rbrace \cup \mathbf{L})$ we have that $\an_{\mathcal{G}}(\lbrace A,Y\rbrace\cup \mathbf{Z})=\an_{\mathcal{G}}(\lbrace A,Y\rbrace \cup \mathbf{L})$. Thus,  $Y \ort_{\mathcal{G}^{pbd}(A,Y)} A \mid \mathbf{Z}$ and \eqref{eq:moral_equiv} imply that $\mathbf{Z}$ is an $A-Y$ cut in $\mathcal{H}^{0}$. Since $\mathbf{L}\subset\mathbf{Z}\subset \an_{\mathcal{G}}(\lbrace A,Y\rbrace \cup \mathbf{L}) \cap \mathbf{N}$ and $\mathbf{Z}\cap \forb(A,Y,\mathcal{G}) = \emptyset$, Lemma \ref{lemma:indep_equiv} implies that $\mathbf{Z}$ is an $A-Y$ cut in $\mathcal{H}^{1}$, which is what we wanted to show.

3) Take $\mathbf{Z}$ a minimal $\mathbf{L}-\mathbf{N}$ static adjustment set.  We will show that $\mathbf{Z}$ is a minimal $A-Y$ cut in $\mathcal{H}^{1}$.
Since $\mathbf{L}\subset\mathbf{Z}\subset \an_{\mathcal{G}}(\lbrace A,Y\rbrace \cup \mathbf{L})\cap \mathbf{N}$, part 2) implies that
$\mathbf{Z}$ is an $A-Y$ cut in $\mathcal{H}^{1}$. Assume for the sake of contradiction that $\mathbf{Z}$ is a not a minimal $A-Y$ cut in $\mathcal{H}^{1}$. Then there exists $\mathbf{Z}^{\prime} \subsetneq \mathbf{Z}$ that is an $A-Y$ cut in $\mathcal{H}^{1}$. By part 1), $\mathbf{Z}^{\prime}$ is an $\mathbf{L}-\mathbf{N}$ static adjustment set with respect to $A,Y$ in $\mathcal{G}$, which contradicts the fact that $\mathbf{Z}$ was a minimal $\mathbf{L}-\mathbf{N}$ static adjustment set. 
 
 Now take $\mathbf{Z}$ a minimal $A-Y$ cut in $\mathcal{H}^{1}$. We will show that $\mathbf{Z}$ is a minimal $\mathbf{L}-\mathbf{N}$ static adjustment set with respect to $A,Y$ in $\mathcal{G}$. By part 1), we know that $\mathbf{Z}$ is an $\mathbf{L}-\mathbf{N}$ static adjustment set with respect to $A,Y$ in $\mathcal{G}$. Assume for the sake of contradiction that $\mathbf{Z}$ is not a minimal $\mathbf{L}-\mathbf{N}$ static adjustment set. {Then there exists $\mathbf{Z}^{\prime} \subsetneq \mathbf{Z}$ that is a minimal $\mathbf{L}-\mathbf{N}$ static adjustment set.} Arguing as before, we see that $\mathbf{Z}^{\prime}$ is an $A-Y$ cut in $\mathcal{H}^{1}$. This contradicts the fact that $\mathbf{Z}$ was a minimal $A-Y$ cut in $\mathcal{H}^{1}$

4) This can be proven using arguments analogous to those used in the proof of part 3).
\end{proof}

We are now ready to prove Proposition \ref{prop:equiv_dyn}.

\begin{proof}[Proof of Proposition \ref{prop:equiv_dyn}]
We prove parts 1) and 2). Parts 3) - 5) follow immediately from Lemma \ref{lemma:equiv} and Proposition \ref{prop:equiv_dtr}.

1) 
Since $(\mathbf{L, N})$ form an admissible pair with respect to $A,Y$ in $\mathcal{G}$, by Theorem 2 of \cite{vanAI} $\mathbf
{W}=\left\lbrace\an_{\mathcal{G}}(\lbrace A, Y\rbrace \cup \mathbf{L})\cap \mathbf{N}\right\rbrace \setminus \forb(A,Y,\mathcal{G})$ is an $\mathbf{L}-\mathbf{N}$ static adjustment set. Since $\mathbf{W}\subset \an_{\mathcal{G}}(\lbrace A, Y\rbrace \cup \mathbf{L}) $, by part 2) of Lemma \ref{lemma:equiv}, $\mathbf{W}$ is an $A-Y$ cut in $\mathcal{H}^{1}$. Hence $A$ and $Y$ cannot be adjacent in $\mathcal{H}^{1}$.

2)
Let $\mathbf{Z}$ be an $A-Y$ cut in $\mathcal{H}^{1}$.
By part 1) of Lemma \ref{lemma:equiv}, $\mathbf{Z}$ is a $\mathbf{L}-\mathbf{N}$ static adjustment set with respect to $A,Y$ in $\mathcal{G}$. 
Then part 1) of Proposition \ref{prop:equiv_dtr} implies that $\mathbf{Z}$ is an $\mathbf{L}-\mathbf{N}$ dynamic adjustment set with respect to $A,Y$ in $\mathcal{G}$.
\end{proof}

\begin{proof}[Proof of Proposition \ref{prop:order_implies_indep}]
We begin with the proof of \eqref{eq:indep_opt_y}. Since $\mathbf{Z}_{1} \unlhd_{\mathcal{H}^{1}} \mathbf{Z}_{2}$, we have that that $Y \perp_{\mathcal{H}^{1}} \mathbf{Z}_{2}\setminus \mathbf{Z}_{1} \mid  \mathbf{Z}_{1}$. Lemma \ref{lemma:indep_equiv} implies $Y \perp_{\mathcal{H}^{0}} \mathbf{Z}_{2}\setminus \mathbf{Z}_{1} \mid  \mathbf{Z}_{1}$ and hence
\begin{equation}
Y \perp_{\mathcal{H}^{0}} \mathbf{Z}_{2}\setminus \mathbf{Z}_{1} \mid  \mathbf{Z}_{1}, A. 
\label{eq:indep_Y_H0}
\end{equation}
Recall that 
$\mathcal{H}^{0}=\left\lbrace \mathcal{G}^{pbd}_{\an_{ \mathcal{G}}(\lbrace A, Y\rbrace  \cup \mathbf{L})}(A,Y)\right\rbrace^{m}$.
Also,  $\mathbf{Z}_{1}, \mathbf{Z}_{2}\subset \mathbf{V}\left(\mathcal{H}^{1} \right) \subset \an_{ \mathcal{G}}(\lbrace A, Y\rbrace \cup \mathbf{L})=\an_{ \mathcal{G}^{pbd}(A,Y)}(\lbrace A, Y\rbrace \cup \mathbf{L})$ and $\mathbf{L}\subset \mathbf{Z}_{1}$, $\mathbf{L}\subset \mathbf{Z}_{2}$. Then $\an_{\mathcal{G}}(\lbrace A,Y\rbrace \cup \mathbf{Z}_{1} \cup \mathbf{Z}_{2})=\an_{\mathcal{G}}(\lbrace A,Y\rbrace \cup \mathbf{L})$.
Hence equations \eqref{eq:moral_equiv} and \eqref{eq:indep_Y_H0}  imply
\begin{equation}
Y \ort_{\mathcal{G}^{pbd}(A,Y)} \mathbf{Z}_{2}\setminus \mathbf{Z}_{1} \mid  \mathbf{Z}_{1}, A.
\label{eq:indep_Y_pbd}
\end{equation}

Now, assume for the sake of contradiction that \eqref{eq:indep_opt_y} does not hold, and hence that there exists a path $\pi$ in $\mathcal{G}$ between $Y$ and a vertex $Z \in  \mathbf{Z}_{2}\setminus \mathbf{Z}_{1}$ that is open in $\mathcal{G}$ given $A, \mathbf{Z}_{1}$. Assume first that $\pi$ does not have colliders, and hence it is either directed or has a single fork. Since $\pi$ is open in $\mathcal{G}$ given $A, \mathbf{Z}_{1}$, $\pi$ does not intersect $A$. Since the proper back-door graph $\mathcal{G}^{pbd}(A,Y)$ is formed by removing from $\mathcal{G}$ the first edge in all causal paths between $A$ and $Y$, the path $\pi$ must exist in $\mathcal{G}^{pbd}(A,Y)$. This contradicts \eqref{eq:indep_Y_pbd}. Hence $\pi$ has to have at least one collider. Since $\pi$ is open in $\mathcal{G}$ given $A, \mathbf{Z}_{1}$, all colliders in $\pi$ must be ancestors of a vertex in $\lbrace A \rbrace \cup \mathbf{Z}_{1}$ an no non-collider in $\pi$ can be in $\lbrace A \rbrace \cup \mathbf{Z}_{1}$. Again, the definition of the proper back-door graph implies that $\pi$ must exist in $\mathcal{G}^{pbd}(A,Y)$ and that all colliders in $\pi$ are also ancestors in $\mathcal{G}^{pbd}(A,Y)$ of a vertex in $\lbrace A \rbrace \cup \mathbf{Z}_{1}$. This contradicts \eqref{eq:indep_Y_pbd}.
It must be that \eqref{eq:indep_opt_y} holds.

Turn now to the proof of \eqref{eq:indep_opt_a}. Since $\mathbf{Z}_{1} \unlhd_{\mathcal{H}^{1}} \mathbf{Z}_{2}$ we have that
$A \perp_{\mathcal{H}^{1}}\mathbf{Z}_{1}\setminus \mathbf{Z}_{2} \mid \mathbf{Z}_{2}$.  By Lemma \ref{lemma:indep_equiv}, this implies that
\begin{equation}
A \perp_{\mathcal{H}^{0}}\mathbf{Z}_{1}\setminus \mathbf{Z}_{2} \mid \mathbf{Z}_{2}.
\label{eq:indep_A_H0}
\end{equation}
Let 
$\widetilde{\mathbf{M}}(A,Y,\mathcal{G})= \an_{ \mathcal{G}^{pbd}(A,Y)}(\lbrace A \rbrace \cup \mathbf{Z}_{1}\cup\mathbf{Z}_{2})$ and
$\widetilde{\mathcal{H}}^{0}(A,Y,\mathcal{G})=\left\lbrace \mathcal{G}^{pbd}_{\widetilde{\mathbf{M}}(A,Y,\mathcal{G})}(A,Y)\right\rbrace^{m}$.
We will show that \eqref{eq:indep_A_H0} implies
\begin{equation}
A \perp_{\widetilde{\mathcal{H}}^{0}(A,Y,\mathcal{G})}  \mathbf{Z}_{1}\setminus \mathbf{Z}_{2} \mid \mathbf{Z}_{2}.
\label{eq:indep_A_H0_tilde}
\end{equation}
Note that, since $\mathbf{Z}_{1}, \mathbf{Z}_{2}\subset  \an_{ \mathcal{G}}(\lbrace A, Y\rbrace \cup \mathbf{L})$, we have $\widetilde{\mathbf{M}}(A,Y,\mathcal{G}) \subset \an_{ \mathcal{G}}(\lbrace A, Y\rbrace \cup \mathbf{L})$. {Thus $\mathcal{G}^{pbd}_{\widetilde{\mathbf{M}}(A,Y,\mathcal{G})}(A,Y)$ is a subgraph of $\mathcal{G}^{pbd}_{\an_{ \mathcal{G}}(\lbrace A, Y\rbrace \cup \mathbf{L})}(A,Y)$ and 
$\widetilde{\mathcal{H}}^{0}(A,Y,\mathcal{G})$ is a subgraph of ${\mathcal{H}}^{0}(A,Y,\mathcal{G})$}. Hence, \eqref{eq:indep_A_H0_tilde} follows from \eqref{eq:indep_A_H0}.
Now equations \eqref{eq:moral_equiv} and \eqref{eq:indep_A_H0_tilde}  imply
\begin{equation}
A \ort_{\mathcal{G}^{pbd}(A,Y)}   \mathbf{Z}_{1}\setminus \mathbf{Z}_{2} \mid \mathbf{Z}_{2}.
\label{eq:indep_A_pbd}
\end{equation}

Next, assume for the sake of contradiction that \eqref{eq:indep_opt_a} does not hold, and hence that there exists a path $\pi$ in $\mathcal{G}$ between $A$ and a vertex $Z \in   \mathbf{Z}_{1}\setminus \mathbf{Z}_{2}$ that is open in $\mathcal{G}$ given $\mathbf{Z}_{2}$. Assume first that $\pi$ does not have colliders, and hence it is either directed or has a single fork. Since $\pi$ is open in  $\mathcal{G}$ given $\mathbf{Z}_{2}$, $\pi$ does not intersect $\mathbf{Z}_{2}$. Since $\mathbf{Z}_{1} \cap \forb(A,Y,\mathcal{G})=\emptyset$, $\mathbf{Z}_{1}\subset \an_{\mathcal{G}}(\lbrace A,Y\rbrace \cup \mathbf{L})$ and $\mathbf{L}\cap \de_{\mathcal{G}}(A) = \emptyset$ no edge in $\pi$ can be of the form $A \rightarrow V$ for some $V$ in $\pi$.  Since the proper back-door graph $\mathcal{G}^{pbd}(A,Y)$ is formed by removing from $\mathcal{G}$ the first edge in all causal paths from $A$ to $Y$, the path $\pi$ must exist in $\mathcal{G}^{pbd}(A,Y)$. This contradicts \eqref{eq:indep_A_pbd}.
Hence $\pi$ has to have at least one collider. Since $\pi$ is open in $\mathcal{G}$ given $\mathbf{Z}_{2}$, all colliders in $\pi$ must be ancestors in $\mathcal{G}$ of a vertex in $\mathbf{Z}_{2}$ an no non-collider in $\pi$ can be in $\mathbf{Z}_{2}$. We can assume without loss of generality that $A$ only appears once on the path $\pi$. Then the only edge in $\pi$ that could possibly not be an edge in $\mathcal{G}^{pbd}$ is the edge that contains $A$, if it points out of $A$. But if the edge points out of $A$, the collider on $\pi$ that is closest to $A$ would be a descendant of $A$, and hence could not be an ancestor of a vertex in $\mathbf{Z}_{2}$, which is a contradiction. Thus $\pi$ must exist in $\mathcal{G}^{pbd}(A,Y)$.
  Since $\mathbf{Z}_{2} \cap \forb(A,Y,\mathcal{G})=\emptyset$, $\mathbf{Z}_{2}\subset \an_{\mathcal{G}}(\lbrace A,Y\rbrace \cup \mathbf{L})$ and $\mathbf{L}\cap \de_{\mathcal{G}}(A) = \emptyset$, all colliders in $\pi$ are also ancestors in $\mathcal{G}^{pbd}(A,Y)$ of a vertex in $\mathbf{Z}_{2}$. This contradicts \eqref{eq:indep_A_pbd}.
It must be that \eqref{eq:indep_opt_a} holds. This finishes the proof of the proposition.
\end{proof}

\begin{proof}[Proof of Theorem \ref{theo:main_indep}]

We begin with the proof of part 1). If $\mathbf{N}=\mathbf{V}$, the result follows from Proposition \ref{prop:opt_adj}. Assume then that $\mathbf{N}\subset \an_{\mathcal{G}}(\lbrace A, Y\rbrace \cup \mathbf{L})$. By part 1) of Proposition \ref{prop:equiv_dyn}, $A$ and $Y$ are not adjacent in $\mathcal{H}^{1}$. Thus, any path in $\mathcal{H}^{1}$ from $A$ to $Y$ has to intersect $\mathbf{O}=\nb_{\mathcal{H}^{1}}(Y)$. It follows that $\mathbf{O}$ is an $A-Y$ cut in $\mathcal{H}^{1}$. Moreover, it is easy to show that $\mathbf{O}\unlhd_{\mathcal{H}^{1}}  \mathbf{Z}$ for any other $A-Y$ cut $\mathbf{Z}$. Since $\mathbf{N}\subset \an_{\mathcal{G}}(\lbrace A, Y\rbrace \cup \mathbf{L})$, by part 3) of Proposition \ref{prop:equiv_dyn}, $\mathbf{Z}$ is an $\mathbf{L}-\mathbf{N}$ dynamic adjustment set with respect to $A,Y$ in $\mathcal{G}$ if and only if $\mathbf{Z}$ is an $A-Y$ cut in $\mathcal{H}^{1}$. The desired result now follows from Proposition \ref{prop:order_implies_indep}.

Turn now to the proof of parts 2) and 3). Since by part 1) of Proposition \ref{prop:equiv_dyn}, $A$ and $Y$ are not adjacent in $\mathcal{H}^{1}$,  Theorems 1 and 2 from \cite{lattice} imply that $\mathbf{O}_{min}$ and $\mathbf{O}_{m}$ are $A-Y$ cuts in $\mathcal{H}^{1}$ and, moreover, $\mathbf{O}_{min}\unlhd_{\mathcal{H}^{1}}  \mathbf{Z} $ for all $\mathbf{Z}$ that is a minimal $A-Y$ cut in $\mathcal{H}^{1}$ and $\mathbf{O}_{m}\unlhd_{\mathcal{H}^{1}}  \mathbf{Z} $ for all $\mathbf{Z}$ that is a minimum $A-Y$ cut in $\mathcal{H}^{1}$. By parts 4) and 5) of Proposition \ref{prop:equiv_dyn}, the set of minimal (minimum) $\mathbf{L}-\mathbf{N}$ dynamic adjustment sets with respect to $A,Y$ in $\mathcal{G}$ is equal to the set of minimal (minimum)
$A-Y$ cuts in $\mathcal{H}^{1}$. The desired result now follows from Proposition \ref{prop:order_implies_indep}.
\end{proof}

\subsection{Proofs of results in Section \protect\ref{sec:algos}}

\begin{proof}[Proof of Lemma \ref{lemma:isinmin_correct}]
First note that the new graph $\mathcal H'$ is constructed from the graph $\mathcal H$ by adding at most two edges: $A - V$ and $V - Y$. Assume that the algorithm return $\true$. Then $m_1 = m_2$. We will show that $V$ is included in a minimum $A - Y$ cut for $\mathcal{H}$. Since $A - V - Y$ is a path in $\mathcal H'$, any minimum $A - Y$ cut in $\mathcal H'$ has to include $V$. Let $\mathbf{Z}^{\prime}$ be a minimum $A - Y$ cut in $\mathcal H'$. Then $ \# \mathbf{Z}^{\prime} =m_{1}$. Since $\mathcal{H}$ is a sub-graph of $\mathcal{H}^{\prime}$, $\mathbf{Z}^{\prime}$ is also an $A-Y$ cut in $\mathcal{H}$ and hence $m_{2} \leq \# \mathbf{Z}^{\prime} =m_{1}$. Since by assumption $m_1 = m_2$, it must be that $\# \mathbf{Z}^{\prime} = m_{2}$. Hence $\mathbf{Z}^{\prime}$ is a minimum $A-Y$ cut in $\mathcal{H}$ that satisfies $V \in \mathbf{Z}^{\prime}$.

Assume next that there exists a minimum $A - Y$ cut $\mathbf Z$ in $\mathcal H$ such that $V \in \mathbf Z$. Then $\#\mathbf{Z}=m_{2}$. Clearly $\mathbf Z$ is also an $A-Y$ cut in $\mathcal{H}^{\prime}$. This implies $m_{2} = \#\mathbf{Z} \geq m_{1}$.  However, since $\mathcal H$ is a sub-graph of $\mathcal H'$, we have $m_1 \geq m_2$. Hence $m_1 = m_2$ and the algorithm outputs $\true$. 
\end{proof}

\begin{proof}[Proof of Proposition \ref{prop:algo_main_correct}]
By assumption \testExistsAdj{$A, Y, \mathcal{G}, \mathbf{N},\mathbf{L}$} $= \true$. Let us call the output of the algorithm $\mathbf Z^\star$.  By Manger's Theorem (see for example Chapter 7 of \cite{graph-book}), the output of \texttt{minCut}$(A, Y , \mathcal H)$ coincides with the number of paths returned by \texttt{disjointPaths}$(A, Y, \mathcal{H})$. Let $\pi_{1},\dots, \pi_{m}$ be the paths returned by \texttt{disjointPaths}$(A, Y, \mathcal{H})$. 

Now, since $\mathbf{O}_{m}$ is an $A-Y$ cut in $\mathcal{H}^{1}$ of size $m$, there is exactly one vertex $V_j  \in \mathbf{O}_{m}$ in each path $\pi_j$, $j = 1, \ldots, m$. The definition of $\mathbf{O}_{m}$ implies that such $V_{j}$ is the vertex on $\pi_{j}$ that: (i)  is a member of at least one minimum $A-Y$ cut, and (ii) is closer to $Y$ on $\pi_{j}$ than any other vertex on $\pi_{j}$ that is a member of at least one minimum $A-Y$ cut. These are precisely the vertices that are included in  $\mathbf Z^\star$. Thus  $\mathbf Z^\star =  \mathbf{O}_{m}$.

Next, we will bound the worst case complexity of Algorithm \ref%
{algo:find_opt_min}. To do so, we first need to bound the cardinalities of $%
\mathbf{V}(\mathcal{H}^{1})$ and of $\mathbf{E}(\mathcal{H}^{1})$ as a
function of $\# \mathbf{V}(\mathcal{G})$. Clearly $\# \mathbf{V}(\mathcal{H}%
^{1}) \leq \# \mathbf{V}(\mathcal{G}) $. 
This in turn implies $\# \mathbf{E}(\mathcal{H}^{1}) \leq \lbrace \#\mathbf{V%
}(\mathcal{G}) \rbrace^{2}$. Now, the first step in Algorithm \ref%
{algo:find_opt_min} is running \texttt{testExistAdj}($A, Y, \mathbf{L}, 
\mathbf{N},\mathcal{G}$), which has complexity $\mathcal{O}\lbrace \# 
\mathbf{V}(\mathcal{G}) + \# \mathbf{E}(\mathcal{G})\rbrace=\mathcal{O}\left[
\lbrace\# \mathbf{V}(\mathcal{G})\rbrace^{2}\right]$. Next, the algorithm
constructs $\mathcal{H}^{1}$. Since \cite{vanAI} show that the proper
back-door graph can be constructed in $\mathcal{O}\left[\lbrace \# \mathbf{V}%
(\mathcal{G})\rbrace^{2}\right]$ time, that $\an_{\mathcal{G}}(\lbrace
A,Y\rbrace \cup \mathbf{L})$ and $\forb(A,Y,\mathcal{G})$ can be computed in 
$\mathcal{O}\left[\lbrace \# \mathbf{V}(\mathcal{G})\rbrace^{2}\right]$ time
(see their Section 6 for these three claims) and that a graph $\mathcal{G}$
can be moralized in $\mathcal{O}\left[\lbrace \# \mathbf{V}(\mathcal{G}%
)\rbrace^{2}\right]$ time (see their Lemma 2), we have that the construction
of $\mathcal{H}^{1}$ has complexity $\mathcal{O}\left[\lbrace \# \mathbf{V}(%
\mathcal{G})\rbrace^{2}\right]$. Next, Algorithm \ref{algo:find_opt_min}
makes one call to \texttt{disjointPaths}($A, Y, \mathcal{H}^{1}$), which has
complexity $\mathcal{O}\left[ \lbrace \# \mathbf{V}(\mathcal{H}^{1})
\rbrace^{1/2} \# \mathbf{E}\lbrace \mathcal{H}^{1}\rbrace \right]=\mathcal{O}%
\left[\lbrace \# \mathbf{V}(\mathcal{G})\rbrace^{5/2}\right]$. After that,
the algorithm makes at most $\# \mathbf{V}(\mathcal{G})$ calls to %
\isInMinimum{$V_j, \mathcal H_1$}. The complexity of 
\isInMinimum{$V_j,
\mathcal H_1$} is bounded by the complexity of \texttt{minCut}($A, Y, 
\mathcal{H}_{1}$), which is bounded by $\mathcal{O}\left[\lbrace \# \mathbf{V%
}(\mathcal{G})\rbrace^{5/2}\right]$. Hence, the overall complexity of the
outer for loop is $\mathcal{O}\left[\lbrace \# \mathbf{V}(\mathcal{G}%
)\rbrace^{7/2}\right]$. This dominates the complexities of all other steps.
We conclude that the worst case complexity of Algorithm \ref%
{algo:find_opt_min} is $\mathcal{O}\left[\lbrace \# \mathbf{V}(\mathcal{G}%
)\rbrace^{7/2}\right]$.
\end{proof}

\begin{proof}[Proof of Proposition \ref{prop:algo_minimal}]
Since by assumption \testExistsAdj{$A, Y, \mathcal{G}, \mathbf{N},\mathbf{L}$} $= \true$, it follows from part 1) of Proposition \ref{prop:equiv_dyn} that $A$ and $Y$ are not adjacent in $\mathcal{H}^{1}$. We claim that, at any iteration of the algorithm,
\begin{align}
&\text{if }V\in\visited\text{, there exists a path in }\mathcal{H}^{1} \text{ from  }V\text{ to }A\text{ that does not intersect} \nb_{\mathcal{H}^{1}}(Y)\text{ except possibly at }V.
\label{eq:induct}
\end{align}
 We prove this by induction on the number of times the while loop was entered, say $k$. Before entering the while loop for the first time, $\visited=\emptyset$ and hence \eqref{eq:induct} holds trivially. Suppose that after $k\geq 0$ iterations of the while loop \eqref{eq:induct} holds. Take $V$ a vertex that is a member of $\visited$ after $k+1$ iterations. If $V$ was already a vertex in $\visited$ after $k$ iterations, then by the inductive assumption, there is a path in $\mathcal{H}^{1}$ from $V$ to $A$ that does not intersect $\nb_{\mathcal{H}^{1}}(Y)$, except possibly at $V$. If $V$ was only added to $\visited$ after $k+1$ iterations, then $V$ is a neighbor of a vertex, say $W$, that was a already a member of $\visited$ after $k$ iterations, but not a member of $\nb_{\mathcal{H}^{1}}(Y)$. By the inductive hypothesis, there is a path in $\mathcal{H}^{1}$ from $W$ to $A$ that does not intersect $\nb_{\mathcal{H}^{1}}(Y)$. Since $W$ and $V$ are adjacent, we conclude that there exists a path $V$ to $A$ that does not intersect $\nb_{\mathcal{H}^{1}}(Y)$, except possibly at $V$. This finishes the proof that \eqref{eq:induct} holds at any iteration of the algorithm.
 
Now note that at any iteration, $\out$ is formed by the vertices in $\visited$ that are adjacent to $Y$. Then the fact that \eqref{eq:induct} holds implies that $\out \subset \mathbf{O}_{min}$. So to prove the proposition, it suffices to show that when the algorithm finishes, $\mathbf{O}_{min} \subset \out$. Take a vertex $O \in \mathbf{O}_{min}$. Then in $\mathcal{H}^{1}$ there is a path, say $\pi$, from $O$ to $A$ that only intersects $\nb_{\mathcal{H}^{1}}(Y)$ at $O$. The vertex adjacent to $A$ in $\pi$ is added to the stack during the first iteration of the while loop. During the next iterations, all subsequent vertices in $\pi$ are visited and their neighbours added to the stack, until $O$ is reached. When $O$ is reached, since it is a neighbor of $Y$, it is added to $\out$. Thus, when the algorithm finishes, $\mathbf{O}_{min} \subset \out$.

This finishes the proof of the proposition.
\end{proof}

\FloatBarrier
\bibliographystyle{apalike}
\bibliography{efficient}

\begin{thebibliography}{}

\bibitem[Acid and De~Campos, 1996]{acid}
Acid, S. and De~Campos, L.~M. (1996).
\newblock An algorithm for finding minimum d-separating sets in belief
  networks.
\newblock In {\em UAI'96}, pages 3--10.

\bibitem[{Bhattacharya} et~al., 2020]{rohit}
{Bhattacharya}, R., {Nabi}, R., and {Shpitser}, I. (2020).
\newblock {Semiparametric Inference For Causal Effects In Graphical Models With
  Hidden Variables}.
\newblock {\em arXiv e-prints}, page arXiv:2003.12659.

\bibitem[Chernozhukov et~al., 2018]{chernozhukov2018double}
Chernozhukov, V., Chetverikov, D., Demirer, M., Duflo, E., Hansen, C., Newey,
  W., and Robins, J. (2018).
\newblock Double/debiased machine learning for treatment and structural
  parameters.
\newblock {\em The Econometrics Journal}, 21(1):C1--C68.

\bibitem[{Dud{\'\i}k} et~al., 2015]{dudik}
{Dud{\'\i}k}, M., {Erhan}, D., {Langford}, J., and {Li}, L. (2015).
\newblock {Doubly Robust Policy Evaluation and Optimization}.
\newblock {\em arXiv e-prints}, page arXiv:1503.02834.

\bibitem[Hahn, 1998]{hahn}
Hahn, J. (1998).
\newblock On the role of the propensity score in efficient semiparametric
  estimation of average treatment effects.
\newblock {\em Econometrica}, pages 315--331.

\bibitem[Halin, 1993]{lattice}
Halin, R. (1993).
\newblock Lattices related to separation in graphs.
\newblock In {\em Finite and Infinite Combinatorics in Sets and Logic}, pages
  153--167. Springer.

\bibitem[Henckel et~al., 2019]{perkovic}
Henckel, L., Perkovi{\'c}, E., and Maathuis, M.~H. (2019).
\newblock Graphical criteria for efficient total effect estimation via
  adjustment in causal linear models.
\newblock {\em arXiv preprint arXiv:1907.02435}.

\bibitem[Hirano et~al., 2003]{hirano}
Hirano, K., Imbens, G.~W., and Ridder, G. (2003).
\newblock Efficient estimation of average treatment effects using the estimated
  propensity score.
\newblock {\em Econometrica}, 71(4):1161--1189.

\bibitem[Jungnickel, 2005]{graph-book}
Jungnickel, D. (2005).
\newblock {\em Graphs, networks and algorithms}.
\newblock Springer.

\bibitem[Kuroki and Cai, 2004]{kuroki-cai}
Kuroki, M. and Cai, Z. (2004).
\newblock Selection of identifiability criteria for total effects by using path
  diagrams.
\newblock In {\em UAI'04}, pages 333--340.

\bibitem[Kuroki and Miyakawa, 2003]{kuroki-miyawa}
Kuroki, M. and Miyakawa, M. (2003).
\newblock Covariate selection for estimating the causal effect of control plans
  by using causal diagrams.
\newblock {\em Journal of the Royal Statistical Society: Series B (Statistical
  Methodology)}, 65(1):209--222.

\bibitem[Lauritzen, 1996]{lauritzen-book}
Lauritzen, S.~L. (1996).
\newblock {\em Graphical {M}odels}.
\newblock Clarendon Press.

\bibitem[Maathuis and Colombo, 2015]{maathuis2015}
Maathuis, M.~H. and Colombo, D. (2015).
\newblock A generalized back-door criterion.
\newblock {\em The Annals of Statistics}, 43(3):1060--1088.

\bibitem[Murphy et~al., 2001]{murphy2001marginal}
Murphy, S.~A., van~der Laan, M.~J., Robins, J.~M., and Group, C. P. P.~R.
  (2001).
\newblock Marginal mean models for dynamic regimes.
\newblock {\em Journal of the American Statistical Association},
  96(456):1410--1423.

\bibitem[Pearl, 2000]{pearl-causality}
Pearl, J. (2000).
\newblock {\em Causality: models, reasoning and inference}.
\newblock Springer.

\bibitem[Precup et~al., 2000]{precup}
Precup, D., Sutton, R.~S., and Singh, S.~P. (2000).
\newblock Eligibility traces for off-policy policy evaluation.
\newblock In {\em Proceedings of the Seventeenth International Conference on
  Machine Learning}, ICML ’00, page 759–766, San Francisco, CA, USA. Morgan
  Kaufmann Publishers Inc.

\bibitem[{Richardson} et~al., 2017]{admg}
{Richardson}, T.~S., {Evans}, R.~J., {Robins}, J.~M., and {Shpitser}, I.
  (2017).
\newblock {Nested Markov Properties for Acyclic Directed Mixed Graphs}.
\newblock {\em arXiv e-prints}, page arXiv:1701.06686.

\bibitem[Robins, 1986]{robins1986}
Robins, J. (1986).
\newblock A new approach to causal inference in mortality studies with a
  sustained exposure period—application to control of the healthy worker
  survivor effect.
\newblock {\em Mathematical modelling}, 7(9-12):1393--1512.

\bibitem[Robins, 1993]{Robins93}
Robins, J.~M. (1993).
\newblock {\em Analytic Methods for Estimating HIV-Treatment and Cofactor
  Effects}, pages 213--288.
\newblock Springer US, Boston, MA.

\bibitem[Robins, 2004]{Robins2004}
Robins, J.~M. (2004).
\newblock {\em Optimal Structural Nested Models for Optimal Sequential
  Decisions}, pages 189--326.
\newblock Springer New York, New York, NY.

\bibitem[Robins and Richardson, 2010]{mediation}
Robins, J.~M. and Richardson, T.~S. (2010).
\newblock Alternative graphical causal models and the identification of direct
  effects.
\newblock {\em Causality and Psychopathology: Finding the determinants of
  disorders and their cures}, pages 103--158.

\bibitem[{Rotnitzky} and {Smucler}, 2019]{eff-adj}
{Rotnitzky}, A. and {Smucler}, E. (2019).
\newblock {Efficient adjustment sets for population average treatment effect
  estimation in non-parametric causal graphical models}.
\newblock {\em arXiv e-prints}, page arXiv:1912.00306.

\bibitem[Scheines et~al., 1998]{scheines}
Scheines, R., Spirtes, P., Glymour, C., Meek, C., and Richardson, T. (1998).
\newblock The tetrad project: Constraint based aids to causal model
  specification.
\newblock {\em Multivariate Behavioral Research}, 33(1):65--117.

\bibitem[Schulte et~al., 2014]{schulte}
Schulte, P.~J., Tsiatis, A.~A., Laber, E.~B., and Davidian, M. (2014).
\newblock Q-and a-learning methods for estimating optimal dynamic treatment
  regimes.
\newblock {\em Statistical science: a review journal of the Institute of
  Mathematical Statistics}, 29(4):640.

\bibitem[Shpitser et~al., 2010]{shpitser-adjustment}
Shpitser, I., VanderWeele, T., and Robins, J.~M. (2010).
\newblock On the validity of covariate adjustment for estimating causal
  effects.
\newblock In {\em UAI'10}, pages 527--536.

\bibitem[Smucler et~al., 2019]{smucler}
Smucler, E., Rotnitzky, A., and Robins, J.~M. (2019).
\newblock A unifying approach for doubly-robust {$l_{1}$} regularized
  estimation of causal contrasts.
\newblock {\em arXiv preprint arXiv:1904.03737}.

\bibitem[Spirtes et~al., 2000]{spirtes}
Spirtes, P., Glymour, C.~N., Scheines, R., Heckerman, D., Meek, C., Cooper, G.,
  and Richardson, T. (2000).
\newblock {\em Causation, prediction, and search}.
\newblock MIT press.

\bibitem[Textor and Liskiewicz, 2011]{textor12}
Textor, J. and Liskiewicz, M. (2011).
\newblock Adjustment criteria in causal diagrams: An algorithmic perspective.
\newblock In {\em UAI'11}, pages 681--688.

\bibitem[Tian et~al., 1998]{pearl-tian}
Tian, J., Paz, A., and Pearl, J. (1998).
\newblock {\em Finding minimal d-separators}.
\newblock Citeseer.

\bibitem[Van~der Laan and Robins, 2003]{vanderlaan}
Van~der Laan, M. and Robins, J.~M. (2003).
\newblock {\em Unified methods for censored longitudinal data and causality}.
\newblock Springer Science \& Business Media.

\bibitem[Van~der Vaart, 2000]{van1998asymptotic}
Van~der Vaart, A.~W. (2000).
\newblock {\em Asymptotic statistics}, volume~3.
\newblock Cambridge university press.

\bibitem[van~der Zander et~al., 2014]{van14}
van~der Zander, B., Liskiewicz, M., and Textor, J. (2014).
\newblock Constructing separators and adjustment sets in ancestral graphs.
\newblock In {\em UAI'14}, pages 11--24.

\bibitem[van~der Zander et~al., 2019]{vanAI}
van~der Zander, B., Li{\'s}kiewicz, M., and Textor, J. (2019).
\newblock Separators and adjustment sets in causal graphs: Complete criteria
  and an algorithmic framework.
\newblock {\em Artificial Intelligence}, 270:1--40.

\bibitem[Verma and Pearl, 1990]{verma-pearl}
Verma, T. and Pearl, J. (1990).
\newblock Causal networks: Semantics and expressiveness.
\newblock In {\em Machine intelligence and pattern recognition}, volume~9,
  pages 69--76. Elsevier.

\bibitem[{Witte} et~al., 2020]{didelez}
{Witte}, J., {Henckel}, L., {Maathuis}, M.~H., and {Didelez}, V. (2020).
\newblock {On efficient adjustment in causal graphs}.
\newblock {\em arXiv e-prints}, page arXiv:2002.06825.

\end{thebibliography}

\end{document}